\documentclass[hidelinks, 11pt,reqno]{amsart}
\usepackage[
  margin=1in,           % Equal margins on all sides
  textwidth=6.2in,      % Custom text width
  textheight=8.5in      % Custom text height
]{geometry}

\usepackage{amsmath, bm}
\usepackage[psamsfonts]{amssymb}
\usepackage{amsfonts}
\usepackage{algorithm2e}
\usepackage{graphicx}
\usepackage{verbatim}
\usepackage{dsfont}
\usepackage{tikz-cd}
\usepackage{mathrsfs}
\usepackage{color}
\usepackage{multicol}
 \usepackage{verbatim}

\usepackage[english]{babel}
\usepackage{fontenc}
\usepackage{indentfirst}
\usepackage{tikz}
\usetikzlibrary{matrix,arrows,decorations.pathmorphing}
\usepackage{algorithm2e}
\usepackage[all]{xy}
\usepackage[T1]{fontenc}
\usepackage{hyperref}
\usepackage{mathtools}
\usepackage{multicol}
\usepackage{float}
\usepackage{physics}
\usepackage{dsfont}
\usepackage{multirow}
\usetikzlibrary{tikzmark}
\usepackage{caption}
\usepackage{subcaption}
\usepackage{float}

\theoremstyle{definition}
\newtheorem{theorem}{Theorem}[section]
\newtheorem{prop}[theorem]{Proposition}
\newtheorem{lemma}[theorem]{Lemma}
\newtheorem{cor}[theorem]{Corollary}

\newtheorem{conj}[theorem]{Conjecture}
\newtheorem{question}[theorem]{Question}
\newtheorem{ex}[theorem]{Example}
\newtheorem{dfn}[theorem]{Definition}
\newtheorem{remark}[theorem]{Remark}

\newtheorem{thm}{Theorem}

\def\ep{\epsilon}

\def\R{\mathbb{R}}
\def\Z{\mathbb{Z}}
\def\N{\mathbb{N}}

\def\C{\mathbb{C}}

\def\F{\mathcal{F}}

\def\T{\mathbb T}
\def\Q{\mathbb Q}
\def\cal R{\mathcal R}
 \def\vol{\mathrm{vol}}
 \def\rhoT{\rho_{{\rm sys}, T^*\T^2}}
 \def\sys{\mathrm{sys}}

\begin{document}
% ---------------- Title block (manual; no preamble changes) 

\author{Jun Zhang}
\email{jzhang4518@ustc.edu.cn}
\address{The Institute of Geometry and Physics, University of Science and Technology of China, 96 Jinzhai Road, Hefei Anhui, 230026, China}

\author{Antong Zhu}
\email{antong@mail.ustc.edu.cn}
\address{School of Mathematical Sciences, University of Science and Technology of China, 96
Jinzhai Road, Hefei Anhui, 230026, China}

\title{Geometry and dynamics on Liouville domains in $T^*\T^2$}
   
\maketitle

\vspace{-5mm}
% ---------------- Abstract (inline style) ----------------
\begin{abstract}
Parallel to the study of toric domains, symplectically convex, and dynamically convex domains in $(\R^4, \omega_{\rm std})$, we build an analogous framework and corresponding subclasses for Liouville domains in $(T^*\T^2,\omega_{\rm can})$. A key feature of this framework is the introduction of a new notion of convexity, based on systolic ratios. Via various machinery in quantitative symplectic geometry, including ECH capacities, shape invariant, dynamical zeta function, etc., we investigate the relations between subclasses of Liouville domains in $T^*\T^2$, obtain large-scale geometry of Liouville domains in $T^*\T^2$ with respect to coarse
Banach-Mazur distance, provide a non-flat codisc bundle of  torus even under the action of exact symplectomorphisms, and verify the agreement of normalized capacities for a wide class of Liouville domains in $T^*\T^2$.
\end{abstract}
 
\tableofcontents

 \section{Introduction}\label{sec-intro}
Star-shaped domains (or called Liouville domains) in  $(\R^{2n}, \omega_{\rm std})$ have long served as a key object in symplectic geometry. Due to the complexity of a general star-shaped domain, usually people focus on subclasses with specific properties. For instance, given a domain $\Omega \subset \R_{\ge 0}^n$ as the closure of a non-empty open subset in $\R_{\geq 0}^n$, the associated toric domain is defined as follows:
\begin{equation}\label{eq-toric-domain}
    X_\Omega := \{z \in \C^n \, |\,  \mu(z)=\pi(|z_1|^2, \dots, |z_n|^2) \in \Omega\},
\end{equation}
which is a symplectic manifold with boundary equipped with the standard symplectic structure $\omega_{\rm std}.$  Moreover, a toric domain $X_\Omega$ is called \emph{monotone} if the outward normals at every point   in its profile hypersurface $\partial\Omega\cap \R_{>0}^n$ have nonnegative entries. 

For another instance, since the geometric convexity fails to be symplectically invariant, different replacements have been explored. One is symplectic convexity, that is, being geometrically convex up to symplectomorphism. The other is dynamical convexity, which requires an index-lower bound of closed orbits of the Reed dynamics on domain's (contact) boundary. In \cite{HWZ98}, Hofer-Wysocki-Zehnder shows that  symplectic convexity implies dynamical convexity, while the converse does not hold due to recent works \cite{CE22,CE25,DGZ24,DGRZ25}. In particular, the first such example in 4-dimension by Chaidez-Edtmair in \cite{CE22} is based on a dynamical criterion on convexity in \cite{CE22}.

A comparison for different subclasses of domains in $(\R^4, \omega_{\rm std})$ is established by Figure \ref{fig:comparison} (cf. Figure 1 in \cite{DGRZ25}).
In terms of notations, $\mathcal{T}_{\R^{2n}}$ denotes the set of toric domains, $\mathcal{M}$ the set of monotone toric domains, $\mathcal{C}$ the set of symplectically convex domains, and $\mathcal{D}_{\R^4}$ denotes  the set of dynamically convex domains. 

\medskip

\begin{figure}[h]
    \centering
    \includegraphics[width=0.8\linewidth]{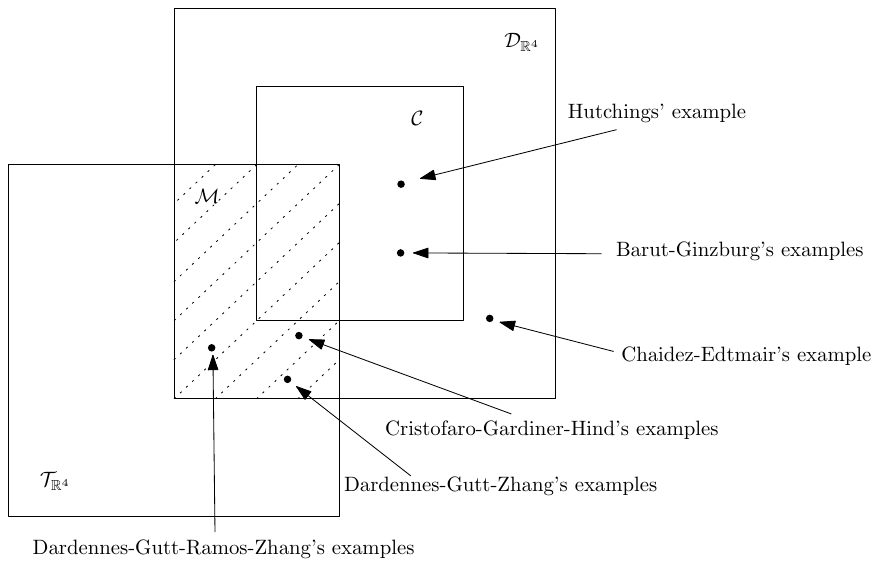}
    \caption{Relations between subclasses  of star-shaped domains in $\R^4$.}
    \label{fig:comparison}
\end{figure}
Examples that distinguish these subclasses have been provided by various references \cite{CE22,DGZ24,DGRZ25,DR23,Hut24,BG25}, which supports those labels in Figure \ref{fig:comparison}. In particular, discovery of these examples relies on various symplectic machinery including embedded contact homology (ECH), symplectic capacities, filtered symplectic homology, etc.

\medskip

In this paper, we aim to set up a parallel framework for fiberwise star-shaped domains (also called by Liouville domains) in $T^*\T^n$, mostly on $T^*\T^2$, where the results involving higher-dimensional $\T^n$ appear in Section \ref{ssec-large} and \ref{ssec-nor-cap}. 
Here, we always identify $\T^2$ with $\R/\Z \times \R/\Z$ (so the standard symplectic volume or area of $\T^2$ is $1$). Adapting Definition 1.1 in \cite{HN16} (see the beginning of Section \ref{sec-back-conx}), one can define dynamical convexity for domains in $T^*\T^2$. Similarly to $\mathcal{D}_{\R^4}$ in Figure \ref{fig:comparison}, denote by $\mathcal{D}_{T^*\T^2}$ the set of dynamically convex domains in $T^*\T^2$. 

Recall that  an (open) toric domain $X_\Omega\subset \R^4$, defined at the beginning of this section, is a star-shaped domain   equipped with a toric fibration whose regular fibers lie over the interior $\mathrm{int}(\Omega)$, while singular fibers occur over the intersections of $\Omega$ with coordinate axes. After removing these singular fibers, the resulting domain $X_{\mathrm{int}(\Omega)}$ becomes topologically non-trivial and can be identified (via symplectic embedding in  (\ref{eq-emb-one})) with   domain  $\T^2\times\mathrm{int}(\Omega)\subset (T^*\T^2,\omega_{\rm can})$. Motivated by this correspondence, we call domains of the form    $\T^2 \times A \subset T^*\T^2$ where $A\subset \R^2$ is star-shaped,   {\it  product domains.}  Similarly to $\mathcal{T}_{\R^4}$ in Figure \ref{fig:comparison}, we denote 
\begin{equation} \label{equ-set-prod-domain}
\mathcal{T}_{T^*\T^2} := \left\{\T^2 \times A \subset T^*\T^2 \,\big| \, \mbox{star-shaped domain $A \subset \R^2$}\right\}.
\end{equation}
It is natural to investigate the relationship between toric domains $X_{\Omega}$ and their corresponding product domains $\T^2 \times {\rm int}(\Omega)$ obtained by deleting all the singular fibers. As will be shown in Propositions \ref{prop-convex} and \ref{prop-zeta-func},   certain dynamical properties will change, while in contrast several quantitative symplectic invariants remain  unchanged under suitable conditions, as demonstrated in Corollary \ref{cor-capacity-BM} and Theorem 1.2.1 of \cite{Mcd25}.

%Meanwhile, by Proposition \ref{prop-toric} based on a trick in \cite{Tra95}, a natural analogue of a domain in $T^*\T^2$ being toric is so-called the {\it product domains}, explicitly in the form of $\T^2 \times A$ for some star-shaped domain $A \subset \R^2$. 

Note that there exists a quite large family of star-shaped domains in $T^*\T^2$ that arises as the (unit) codisc bundles of $(\T^2, F)$ for Finsler metrics $F$ on $\T^2$, usually denoted by $D_F^*\T^2$. 
Recall the definition of  codisc  bundles 
$D_F^*\T^2 \coloneqq \{(q,p)\in T^*\T^2 \mid F^*(q,p) \leq 1\},$
where   
$F^*\colon T^*\T^2 \to \R$ denotes the dual norm
\begin{equation}\label{dfn-F*}
    F^*(q,p)\coloneqq \sup_{v\in T_q\T^2,\; F(q,v)\leq 1} p\cdot v, 
\qquad \forall (q,p)\in T^*\T^2.
\end{equation} 
Denote by $\mathcal{F}$ the set of all codisc bundles of $(\T^2, F)$ for Finsler metrics $F$ on $\T^2$, and by definition their fibers are not only star-shaped but also convex. However, $\mathcal F$ serves as an ill analogue of $\mathcal C$ (the set of convex domains in $\R^4$) in Figure \ref{fig:comparison} since, by (9) of Theorem \ref{thm-class} below, $\mathcal F$ does {\it not} sit inside $\mathcal{D}_{T^*\T^2}$. In fact, elements in $\mathcal F$ may go beyond those convexity constrains as in the Euclidean spaces, either from index reason or from other geometric reasons. In order to set up a refined subclass in $\mathcal F$, let us recall the weak Viterbo conjecture in \cite{Vit00} that reflects the rigidity of being convex in terms of dynamics: for any convex domain $X \subset (\R^4, d\lambda_{\rm std})$ (with smooth boundary), 
\begin{equation}\label{eq-sys-rat-1}
    \rho_{\rm sys, \R^4}(X)\coloneqq \frac{\mathrm{sys}(\partial X)^2}{\mathrm{vol}(\partial X, \lambda_{\rm std}|_{\partial X})} \leq 1
\end{equation}
where $\mathrm{sys}(\partial X)$ denotes the minimal action of  closed Reeb orbit on $(\partial X, \lambda_{\rm std}|_{\partial X})$ and $\mathrm{vol}(\partial X, \lambda_{\rm std}|_{\partial X})$ denotes the volume of $\partial X$ with respect to the contact 1-form $\lambda_{\rm std}|_{\partial X}$ on $\partial X$. However, it was recently proved to be false in \cite{PY25} via a non-centrally symmetric  example. In the setting of $T^*\T^2$, analogous questions concerning   the systolic ratio of   codisc bundles of the torus  have been studied (see, e.g., \cite{Sab10, ABT16, BG24}). This motivates the following definition.

%In other words, there exists a relatively small (uniform) upper bound of $\rho_{\rm sys}$ for elements in $\mathcal C$. This motives the following definition in the setting of $(T^*\T^2, d\lambda_{\rm can})$. 

 \begin{dfn}\label{dfn-sys-conx}
A fiberwise star-shaped domain $X\subset  (T^*\T^2, d\lambda_{\rm can})$ is called {\bf systolically convex} if it is dynamically convex and \footnote{We use different notations for systolic ratio since in the setting $(\R^4, d \lambda_{\rm std})$, the ratio $\rho_{\rm sys, \R^4}$ in (\ref{eq-sys-rat-1}) is a symplectic invariant under symplectomorphism acting on input domain $X$ (see Lemma 3.5 \cite{CE22} or Lemma 2.10 in \cite{Ush22}), while in $(T^*\T^2, d\lambda_{\rm can})$ since $H^1(\T^2; \R)$ is non-trivial, the ratio $\rhoT(X)$ is only a contact invariant of the boundary $\partial X$. Note that any {\it exact} symplectomorphism in $(T^*\T^2, d\lambda_{\rm can})$ preserves the ratio $\rhoT$.}
\begin{equation} \label{eq-codisc-sym-sys}
\rho_{{\rm sys}, T^*\T^2}(X)\coloneqq\frac{\mathrm{sys}(\partial X)^2}{\mathrm{vol}(\partial X, \lambda_{\rm can}|_{\partial X})} \leq \frac{1}{4}. 
\end{equation}
 \end{dfn}

Here, we give two examples to support Definition \ref{dfn-sys-conx}. 

\begin{ex} \label{ex-sys-conv} Consider a product domain $X = \T^2 \times A \subset T^*\T^2$ (as an element in $\mathcal{T}_{T^*\T^2}$ in (\ref{equ-set-prod-domain})), where $A \subset (\R^2)^*$ is a star-shaped and centrally symmetric convex domain in the dual space of $\R^2$. Then via Minkowski gauge functional on $A$, one obtains a map $\kappa^*_{A}: (\R^2)^* \to \R_{\geq 0}$. It induces a reversible flat Finsler norm $F: T\T^2 \simeq \T^2 \times \R^2 \to \R$ defined by 
\begin{equation} \label{cons-Finsler}
F(q, x) =\kappa_A(x) \,\,\,\,\mbox{for $x \in \R^2$}
\end{equation}
where $\kappa_A$ is the dual of $\kappa^*_A$ (and then $D_F^*\T^2 = \T^2 \times A$). In fact, by an equivalence definition of a Finsler norm being flat \cite{Ben24,BG24}, the codisc bundle of any flat 2-torus $(\T^2, F)$ is a product domain lying in $\mathcal{T}_{T^*\T^2}$, and being reversible enhances the fiber to be centrally symmetric. By Proposition \ref{prop-convex} below, $X$ is dynamically convex. Moreover, by Theorem 12.1 in \cite{Sab10} (see also the paragraph under Theorem IV in \cite{ABT16}) applied to the reversible Finsler metric $F$ on $\T^2$ constructed in (\ref{cons-Finsler}) above, $\rho_{{\rm sys}, T^*\T^2}(X)\leq\frac{1}{4}$. Explicitly, in terms of the notation in \cite{ABT16}, 

\[ \rho_{{\rm sys}, T^*\T^2}(X) \leq \frac{(\sqrt{V/3})^2}{4V/3} = \frac{1}{4} \]
where $V = \mathrm{vol}(\partial X, \lambda_{\rm can}|_{\partial X})$. Therefore, any such product domain $X$ in $\mathcal{T}_{T^*\T^2}$ is systolically convex. The upper bound $\frac{1}{4}$ is sharp in the sense that it can be obtained by $A$ as a polygon in $\R^2$ with vertices $(1,0), (0, 1), (-1, 0), (0, -1)$ by \cite{Sab10} and the paragraph under Theorem 4.6 in \cite{ABT16}.
\end{ex}

\begin{remark} \noindent (i) \label{rmk-1-3} Theorem IV in \cite{ABT16} in fact implies that for any $X = \T^2 \times A \subset T^*\T^2$ where $A$ is star-shaped and convex (but not necessarily centrally symmetric), and then the ratio satisfies $\rho_{{\rm sys}, T^*\T^2}(X) \leq \frac{1}{3}$. Again,  by the paragraph under Theorem 4.6 in \cite{ABT16}, this upper bound $\frac{1}{3}$ can be obtained by a (non-centrally symmetric but convex) polygon $A \subset \R^2$ as shown in Figure \ref{fig:example_(5)} below (borrowed from Figure 1 in \cite{ABT16}). 

\noindent (ii) The criterion formulated in Proposition 1.9 of \cite{CE22} involves two key ingredients: the systolic ratio $\rho_{\rm sys, \R^4}$ and the Ruelle invariant (see Definition 2.17 of \cite{CE22}). However, we will see in Proposition \ref{prop-convex} that Ruelle invariant does  not distinguish product domains $\mathcal{T}_{T^*\T^2}$ in $(T^*\T^2,\omega_{\rm can}).$ 
\end{remark} 

\begin{remark} For the product domain $\T^2\times A$, we mainly work with those where the fiber $A$ admits a smooth boundary $\partial A$. When $\partial A$ is non-smooth (for example a polygon),  we approximate it (in a $C^0$-sense) by a sequence of  star-shaped domains $\{A_\ep\}_{\ep >0}$ with smooth boundaries and define the systolic ratio $\rhoT(\T^2\times A)$  as the limit  of the systolic ratio of the sequence $\{\T^2\times A_\ep\}_{\ep >0}$.  An explicit smoothing construction and computation is presented in Example \ref{ex-perturb-cal}.\end{remark}

Besides product domains with convex fibers as discussed in Example \ref{ex-sys-conv}, here is another family of systolically convex domains, supporting Definition \ref{dfn-sys-conx}.  For  a (smooth) star-shaped domain $A\subset \R^2$, we call it {\it generalized monotone} if for any $p=(p_1,p_2)\in \partial A$ with outward normal vector $n(p)=(n_1,n_2)$, the ``product'' vector $(p_1n_1,p_2n_2)$ lies in $ \R_{\geq0}^2.$ As a special case, a domain $\Omega\subset \R^2_{\geq 0}$ is generalized monotone if $X_\Omega$ is a monotone toric domain, after performing appropriate smoothings   of $\Omega$ around the corners.
 
\begin{ex}\label{ex-mono-sys-convex}
Any product domain $ \T^2\times A$, where $A\subset \R^2$ is generalized monotone, is  systolically convex. 
In each quadrant, we consider the maximal triangle $\Delta_i \subset A$ and $\Delta_i\cap \partial A\neq \emptyset$  with vertices
\[
 \{(0,0),(a_i,0),(0,a_i)\}\subset A ,\quad 1\leq i\leq 4.
\] 
   Let $p\in \Delta_i\cap \partial A$. If $p$ lies on a coordinate axis, then by the action formula~(\ref{eq-action}), there exists a closed Reeb orbit in $\T^2\times\{p\}$ corresponding to the normal direction $(\pm1,0)$ or $(0,\pm1)$ with action $a_i$. 
If $p\in \R_{>0}^2$, then $\Delta_i$ is tangent to $\partial A$ at $p$, and the outward normal direction at $p$ is $(\pm1,\pm1)$. In this case there exists a closed Reeb orbit in $\T^2\times\{p\}$ with action $a_i$ corresponding to this direction. Moreover, in either case there exists a polygon with vertices $(\min_i |a_i|,0),(0,\min_i |a_i|), (-\min_i |a_i|,0),(0,-\min_i |a_i|),$ contained inside $A$. Then  the upper bound for the systolic ratio of $\T^2\times A$ follows Example \ref{ex-convex-hull} in Appendix \ref{sec-app}. Together with Proposition \ref{prop-convex}, we conclude that any such domain is systolically convex. 
\end{ex}

%\begin{remark}\label{rmk-upper-bound}
%Here are some remarks around   Definition \ref{dfn-sys-conx}.

%\noindent (ii) The equation  (\ref{eq-codisc-sym-sys}) can be viewed as the inequality of weak Viterbo’s conjecture in $T^*\T^2$ setting. Moreover, by \cite{Ben24}, for codsic bundle of flat Riemannian torus $D_g^*\T^2$, any  ball-normalized  capacity $c$ on  $D_g^*\T^2$ is equal to $2\mathrm{sys}(D_g^*\T^2)$. Therefore, inequality (\ref{eq-codisc-sym-sys}) in this case can be translated as 
%$c(D_g^*\T^2)^2\leq 2 \mathrm{Vol}(D_g^*\T^2),$ 
%which  agrees with the inequality of Viterbo’s volume-capacity conjecture in \cite{Vit00}.

%\end{remark}

Denote by $\mathcal{S}$ the set of all systolically convex domains in $(T^*\T^2, d\lambda_{\rm can})$. In this paper, we regard it as an analogue of $\mathcal C$ in Figure \ref{fig:comparison}. So far, we have encountered the following four subclasses of domains in $T^*\T^2$ from the discussions above: $\mathcal{D}_{T^*\T^2}, \mathcal{T}_{T^*\T^2}, \mathcal F$ and $\mathcal S$, with inclusion relations (by Proposition \ref{prop-convex} for the first inclusion and by definition directly for the second inclusion): 
\begin{equation} \label{inclusion-subclass}
\mathcal{T}_{T^*\T^2} \subset \mathcal{D}_{T^*\T^2},\,\,\,\mbox{and}\,\,\,\,\mathcal S \subset \mathcal{D}_{T^*\T^2}. 
\end{equation}
 Different from Figure \ref{fig:comparison}, in the setting of $T^*\T^2$ the existence of subcalss $\mathcal F$ (which does not admit any analogue in the Euclidean space setting) provides richer examples that spread around other subclasses. To facilitate our discussion, let us introduce a few more notations that refine $\mathcal F$: 
\begin{equation} \label{subclass-F}
\mathcal{F}_{\rm rev} \subset \mathcal{F}_{\rm flat} =: \mathcal F \cap \mathcal{T}_{T^*\T^2} 
\end{equation}
where $\mathcal{F}_{\rm rev}$ consists of those product domains with fibers star-shaped and centrally symmetric convex (see Example \ref{ex-sys-conv}), and $\mathcal{F}_{\rm flat}$ consists of those product domains with fibers star-shaped and also convex. Later along the proof of Theorem \ref{thm-normalized} we will see that subclass $\mathcal{F}_{\rm flat}$ is closely related to 
\emph{generalized convex toric domains} defined in \cite{Mcd25} (see Section \ref{section-normalized}). Theorem \ref{thm-class} fully distinguishes all these subclasses, with schematic picture Figure \ref{fig:diagram} below, serving as an analogue of Figure \ref{fig:comparison}.
\begin{figure}[h]
    \centering
    \includegraphics[width=0.62\linewidth]{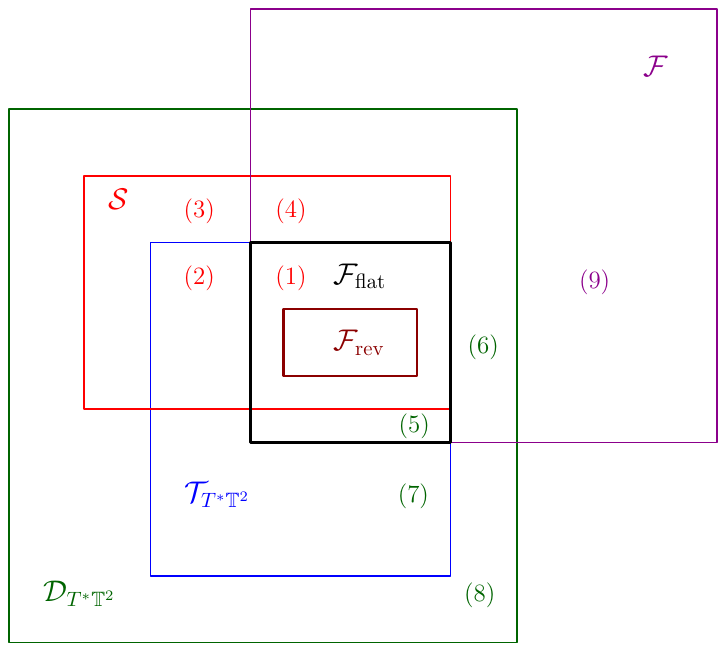}
    \caption{Relation between subclasses of fiberwise star-shaped domains in $T^*\T^2$. Distinguishing examples (1) - (9) will be provided in the proof of Theorem \ref{thm-class} in Section \ref{sec-thm-A}. }
    \label{fig:diagram}
\end{figure} 
Moreover, recall that toric domains $\mathcal{T}_{\R^4}$ can be distinguished from convex domains $\mathcal{C}$ in Figure \ref{fig:comparison}  under symplectomorphism equivalence by computing dynamical zeta function introduced in \cite{Hut24}. Here, in a similar manner, we can distinguish some element in $\mathcal F$ from those in $\mathcal{F}_{\rm flat} $ even under exact symplectomorphisms.  This will be confirmed by Theorem \ref{thm-non-flat}.

\begin{remark} Let us mention another curious point about the subclasses constructed above. Comparing Figure \ref{fig:comparison} and Figure \ref{fig:diagram}, there is no direct analogue of ``monotone toric domain'' in the setting of $T^*\T^2$ partially due to the first relation in (\ref{inclusion-subclass}). One possible way to specify such a family is to strengthen the definition of dynamical convexity, for instance, with extra index-condition on {\it non-contractible} closed Reeb orbits. Then the size of subclass $\mathcal{D}_{T^*\T^2}$ is decreased and the first inclusion in (\ref{inclusion-subclass}) does not always hold. Another possible way is to specify a family of star-shaped domains in $T^*\T^2$ such that proofs in \cite{GHR22,GMV22,Ben24}, which confirm the coincidence of capacities, go through still (cf.~Corollary \ref{cor-capacity-BM} below). This will be explored somewhere else. \end{remark}

Parallel to the ``systolic ratio conjecture'' on dynamically convex domains in $\R^{4}$ (see the paragraph below Theorem 1.1 in \cite{ABHS18}), here we propose a similar question but in the setting of $T^*\T^2$:

\begin{question}\label{que-sys-ratio} Is $\sup\left\{\rho_{{\rm sys}, T^*\T^2}(X) \,| \, X \in \mathcal{D}_{T^*\T^2}\right\}$ finite or infinite? \end{question}

A few words about Question \ref{que-sys-ratio}: when restricted to product domains in the set (\ref{equ-set-prod-domain}) above, this question was mentioned in the second paragraph in Section 5.1 of \cite{Sim25}; in Appendix \ref{sec-app} of the current paper, we can verify the boundedness for a certain family of product domains (see Corollary~\ref{cor-bound-sys} and Example~\ref{ex-convex-hull}). In fact, we have been informed by A.~Abbondandolo and more in detail by S.~Vialaret that in an upcoming work \cite{BVV}, the systolic ratios of product domains in (\ref{equ-set-prod-domain}) is uniformly bounded by $\frac{4}{9}$. In a parallel way, one may ask  the same question on the systolic ratio $\rho_{\rm sys, \R^4}$ for toric domains $X_\Omega \subset \R^4$. In this case the finiteness of $\rho_{\rm sys, \R^4}$ is almost immediate. Indeed, one can consider either closed Reeb orbits from the intersection of $\Omega$ with the coordinate axes or the closed Reeb orbit from the largest isosceles right triangle embedded inside $\Omega$. 
 
\medskip

Apart from the comparison and distinguishing results of star-shaped domains in $(\R^4, \omega_{\rm std})$ illustrated in Figure \ref{fig:comparison}, several further studies have been explored. For instance, from the perspective of metric geometry, for the set of star-shaped domains in $(\R^{2n},\omega_{\rm std})$, its large-scale geometry with respect to \emph{(fine) symplectic Banach-Mazur distance} (and here we denote it by $d_{\rm SBM}$) was obtained by Theorem  1.5 in \cite{Ush22}. Restricted to $(\R^4, \omega_{\rm std})$, Theorem 1.2 in \cite{DR23} confirms the large-scale geometry of the set of monotone toric domains $\mathcal{M}$ with respect to \emph{coarse Banach-Mazur distance} (denoted by $d_{\rm BM}$ and see (\ref{eq-CBM}) in Section \ref{ssec-large}). Note that \cite{DR23} considerably improves \cite{Ush22} (in the case of $\R^4$), since $d_{\rm BM}$ could be much smaller than $d_{\rm SBM}$ in general (see Theorem 1.4 in \cite{Ush22}). 

For the same study on the set of fiberwise star-shaped domains in $T^*\T^2$ (in fact, for more general $T^*\Sigma_{g}$ where $\Sigma_{g}$ is a closed surface with genus $g$), large-scale geometry has been confirmed with respect to $d_{\rm SBM}$ as the main results in \cite{SZ21}. In this paper, Theorem \ref{thm-large-scale-one} below confirms the large-scale geometry of the set of product domains $\mathcal{T}_{T^*\T^2}$ in $(T^*\T^2,\omega_{\rm can})$ with respect to $d_{\rm BM}$. Meanwhile, incorporating with non-trivial topology in the setting of $T^*\T^n$ (for a general $n \geq 2$), the existence of large-scale geometry on $\mathcal{T}_{T^*\T^n}$ with respect to newly-introduced Banach-Mazur type distance can be obtained by Theorem~\ref{thm-large-scale-two}. 

\medskip

For another instance, the strong Viterbo conjecture approaches the study of star-shaped domains in $(\R^{2n}, \omega_{\rm std})$ (specifically for convex domains) from another quantitative perspective based on symplectic capacities. It claims that all ball-normalized symplectic capacities coincide on convex domains, which was  recently  disproved  in \cite{PY25}. In $(T^*\T^2,\omega_{\rm can})$, the coincidence of ball-normalized capacities was confirmed for a certain family of fiberwise star-shaped domains (in fact, of $T^*\T^n$), including all codisc bundles of $(\T^2, g)$ for flat Riemannian metrics $g$ (see Theorem 1.1 and Remark 1.2 in \cite{Ben24}). Theorem \ref{thm-normalized} and Theorem \ref{thm-ball-normalized} below confirm this coincidence for some other cases in $T^*\T^n$. 

\begin{remark}\label{rmk-intro-cap}
Note that all the cases, where the normalized capacities coincide in \cite{Ben24}, Theorem \ref{thm-normalized}, and Theorem \ref{thm-ball-normalized} in the current paper, sit inside $\mathcal S$ (i.e., systolically convex as in Definition \ref{dfn-sys-conx}). Meanwhile, Example \ref{ex-cap-non-coin} below confirms that there exists a domain $\T^2\times A \in {\mathcal T}_{T^*\T^2} \backslash \mathcal{S}$ where two ball-normalized capacities do {\it not} coincide. View the defining property of $\mathcal S$, being systolically convex, as our counterpart of being geometrically convex in $\R^{4}$, then we propose the following analogue of the strong Viterbo conjecture in the setting of $(T^*\T^2, \omega_{\rm can})$:

\begin{conj} \label{conj-T-Vit}
All ball-normalized capacities coincide for  elements in $\mathcal S$. 
\end{conj}

Denote by $\F_{\rm rev}^{2n}$ the general $2n$-dimensional version of $\F_{\rm rev}$. Similarly, one can formulate a conjecture, analogue to the weak Viterbo conjecture, in this cotangent bundle setting (cf.(\ref{eq-sys-rat-1})) when the fiberwise star-shaped domain possesses some symmetry (for instance, elements in $\mathcal{F}_{\rm rev}^{2n}$).  The same conclusion as in Conjecture \ref{conj-T-Vit} can also be posted for $\F_{\rm rev}^{2n}$. Interestingly, there is a close connection between Conjecture \ref{conj-T-Vit} on $\F_{\rm rev}^{2n}$ and an ``enhanced'' version of the strong Viterbo conjecture (claiming the coincidence of all ball-normalized capacities on centrally symmetric convex domains in $\R^{2n}$): Proposition \ref{prop-counterexample} below implies that if there exists a product domain $\T^n \times A \in\F_{\rm rev}^{2n}$ for which ball-normalized capacities do {\it not} coincide, then the Lagrangian product $(A\times A^*,\omega_{\rm can})\subset (T^*\R^n,\omega_{\rm can})\simeq(\R^{2n},\omega_{\rm std})$ provides a counterexample of this enhanced strong Viterbo conjecture in $(\R^{2n}, \omega_{\rm std})$. This provides a way to disprove the enhanced strong Viterbo conjecture.
\end{remark}

\section{Main results}
\subsection{Distinguish subclasses}\label{sec-back-conx}
For our setting $(T^*\T^2, \omega_{\rm can} = d\lambda_{\rm can})$, its second homotopy group $\pi_2$ vanishes. Meanwhile, for any fiberwise star-shaped domain $U \subset T^*\T^2$, its boundary $\partial U$ can be viewed as a contact $3$-manifold $(\partial U,\xi_{\rm can} = \ker \lambda_{\rm can}|_{\partial U})$. Then, following Definition 1.1 in \cite{HN16}, the \emph{dynamical convexity} condition on the contact 1-form $\lambda_{\rm can}|_{\partial U}$  requires  that
\begin{itemize}
    \item [(i)] either $(\partial U,\lambda_{\rm can}|_{\partial U})$ has no contractible Reeb orbits;
    \item [(ii)] or any contractible Reeb orbit $\gamma$ has CZ-index  greater or equal to $3.$  
\end{itemize}
Here, CZ-index represents the Conley-Zehnder index of $\gamma$ with respect to any trivialization $\tau$ of $\xi|_\gamma$ extended to a bounding disk of $\gamma$. 

\medskip

In some special cases, such as product domains $U = \T^2 \times A$ in (\ref{equ-set-prod-domain}), the Reeb dynamics on the contact boundary $\partial U = \T^2 \times \partial A$ is particularly easy to analyze (see the beginning of Section \ref{sec-proofs}). Proposition \ref{prop-convex} then follows, and it completes the set-containing relations in (\ref{inclusion-subclass}). Moreover, the following is our first main result, which refines the set-containing relations in Figure \ref{fig:diagram} by constructing a series of examples that distinguish different subclasses in Figure \ref{fig:diagram}.

\begin{thm}\label{thm-class}
All containment relations in Figure~\ref{fig:diagram} are strict. 
\end{thm}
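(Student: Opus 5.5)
Theorem \ref{thm-class} is a separation statement, and I would prove it by exhibiting, for each of the regions labelled (1)--(9) in Figure \ref{fig:diagram}, an explicit fiberwise star-shaped domain lying in exactly the combination of the classes $\mathcal{D}_{T^*\T^2}$, $\mathcal{T}_{T^*\T^2}$, $\mathcal{F}$, $\mathcal{F}_{\rm flat}$, $\mathcal{F}_{\rm rev}$ and $\mathcal S$ that the region indicates. Most examples will be product domains $\T^2\times A$. For these, the Reeb flow on $\T^2\times\partial A$ is linear on each torus $\T^2\times\{p\}$, in the direction of the outward normal $n(p)$. Closed orbits therefore occur exactly at the points $p$ where $n(p)$ is a rational direction. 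The orbits are non-contractible, with action given by the action formula (\ref{eq-action}), namely the support value $\langle p, v\rangle$ for the primitive integer vector $v$ parallel to $n(p)$. Membership in $\mathcal{D}_{T^*\T^2}$ is then automatic by Proposition \ref{prop-convex}. Membership in $\mathcal S$ reduces to the planar inequality $\min_v h_A(v)^2 \le \tfrac14 \cdot 2\,\mathrm{area}(A)$, using $\mathrm{vol}(\partial X)=2\,\mathrm{area}(A)$ with the normalization $\mathrm{area}(\T^2)=1$.

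For the product-domain examples I would use the following shapes, each tested directly by this inequality.
\begin{itemize}
\item Centrally symmetric convex $A$ gives elements of $\mathcal{F}_{\rm rev}\subset\mathcal S$, by Example \ref{ex-sys-conv}.
\item The non-symmetric convex polygon of Remark \ref{rmk-1-3}, suitably smoothed as in Example \ref{ex-perturb-cal}, attains $\rho_{\rm sys}$ close to $\tfrac13>\tfrac14$. It therefore lies in $\mathcal{F}_{\rm flat}\setminus\mathcal S$, and also separates $\mathcal{F}_{\rm rev}$ from $\mathcal{F}_{\rm flat}$.
\item Generalized monotone but non-convex $A$, such as a four-pointed star, lies in $\mathcal{T}_{T^*\T^2}\cap\mathcal S$ but outside $\mathcal F$, by Example \ref{ex-mono-sys-convex}.
\item A non-convex $A$ with thin spikes, whose support values in all integer directions are pinned by a small inscribed region while the area stays relatively large, or whose area is small compared with the minimal support, gives product domains outside $\mathcal S$ and outside $\mathcal F$.
\end{itemize}
To land strictly on either side of $\tfrac14$, I will choose the shapes by tuning one parameter.

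For the regions outside $\mathcal{T}_{T^*\T^2}$, I need domains that are not exact-symplectomorphic to product domains. I would argue this with the obstruction the paper develops in Theorem \ref{thm-non-flat} via the dynamical zeta function, or more cheaply by building a domain whose contact boundary carries a \emph{contractible} Reeb orbit, which never happens for product domains. For elements of $\mathcal F\setminus\mathcal{D}_{T^*\T^2}$, as in item (9), I would take a Finsler or Riemannian torus with a contractible closed geodesic of low index. For example, take a torus metric with a thin neck or a bump region containing a short contractible geodesic that is hyperbolic or of Morse index $0$. Its lift to the unit cotangent bundle has Conley--Zehnder index at most $2$ in the disk trivialization, which violates dynamical convexity. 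For domains in $\mathcal{D}_{T^*\T^2}\setminus(\mathcal S\cup\mathcal{T}\cup\mathcal F)$ and similar regions, I would perturb a product example fiberwise by a $q$-dependent, non-convex modification. The perturbation is chosen small enough to preserve both the absence of contractible orbits of low index and the strict sign of $\rho_{\rm sys}-\tfrac14$, since the systole and volume vary continuously under $C^2$-small perturbations with nondegenerate minimal orbits.

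The main obstacle is verifying non-membership claims that are invariant under exact symplectomorphism. This concerns mainly showing that a given element of $\mathcal F$ or $\mathcal S$ is not in $\mathcal{T}_{T^*\T^2}$ (if the classes are meant up to equivalence), and controlling the CZ-indices of all contractible orbits in the perturbed examples. For the former, I would rely first on contractible-orbit and zeta-function invariants. For the latter, I would use Morse index theory for geodesics, so that the CZ index equals the Morse index plus a correction, and I would keep the perturbations localized away from any region where new contractible orbits could appear.
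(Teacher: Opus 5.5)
Your handling of $\mathcal F_{\rm rev}$, region (2) (generalized monotone star) and region (5) (the $\rhoT=\frac13$ triangle) matches the paper. The remaining regions have genuine gaps.

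\textbf{Region (9): your example cannot work.} The Reeb lift of a closed geodesic $\gamma$ lies in the class $([\gamma],w)\in\pi_1(S^*\T^2)\cong\Z^2\oplus\Z$, where $w$ is the rotation number of $\dot\gamma$. Suppose $\gamma$ is a simple closed curve that is null-homotopic in $\T^2$, such as a neck waist or a geodesic on a bump. Then $w=\pm1$ by Hopf's Umlaufsatz. So the lift and all its iterates are \emph{non-contractible} in $S^*\T^2$. There is no capping disk, and dynamical convexity says nothing about such orbits. You need a geodesic that is null-homotopic \emph{and} has rotation number $0$, which forces self-intersections. The paper takes a figure-eight geodesic in a hyperbolic pair of pants. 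It caps two boundary curves with discs and glues a punctured torus to the third. The figure-eight then becomes contractible with rotation $0$; it is hyperbolic of Morse index $0$, and its contractible lift has $\mu_{\rm CZ}=0<3$.

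\textbf{Region (7): the support-function reduction fails for non-convex fibers.} The identity $\sys=\min_v h_A(v)$ holds only for convex $A$. Every $p\in\partial A$ whose outward normal is parallel to a primitive $v\in\Z^2$ carries a torus of orbits of action $\langle p,v\rangle$, and on concave arcs such $p$ are not support points. In general one only has $\sys(\T^2\times\partial A)\le\min_v h_A(v)$ (Proposition~\ref{prop-convex-hull} and the remark after it). This certifies membership in $\mathcal S$, but it can never certify $\rhoT>\frac14$. Thin spikes actually push $\rhoT$ down: a thin spike along the $x$-axis has sides with normals $(0,\pm1)$, carrying orbits of action about half its width. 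The paper instead cuts a small notch at the vertex $(-1,-1)$ of the $\rhoT=\frac13$ triangle. There every normal on the new concave part has nonpositive entries and $p\approx(-1,-1)$. So all new actions are at least $1-O(\delta)$, while the area drops only by $O(\delta^2)$.

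\textbf{Regions (3), (4), (6), (8): the perturbation argument is not justified.} Every closed orbit of a product domain is degenerate: it sits in a family $\T^2\times\{p\}$ with unipotent return map, cf.~(\ref{eq-linearized}). So there are no nondegenerate minimal orbits to continue. Also, $C^2$-smallness does not by itself rule out contractible orbits, of long period and uncontrolled index, created by the perturbation. A non-convex modification would in any case push (4) and (6) out of $\mathcal F$. You also do not need exact-symplectic invariants such as the zeta function, since the classes are literal sets of domains.

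The missing idea is Lemma~\ref{lemma-contact}. The cotangent lift $\Phi_f(q,p)=(f(q),(df_q^{-1})^*p)$ of a diffeomorphism $f$ of $\T^2$ satisfies $\Phi_f^*\lambda_{\rm can}=\lambda_{\rm can}$. Hence:
\begin{itemize}
\item on boundaries it is a strict contactomorphism, so it preserves $\rhoT$ and the non-contractibility of all Reeb orbits \emph{exactly};
\item it acts linearly on fibers, so it preserves both convexity and non-convexity of fibers;
\item for $f_\ep(q)=(q_1+\frac{\ep}{2\pi}\sin 2\pi q_2,q_2)$, the fiber over $q$ is a $q_2$-dependent shear of $A$, so the image is not a product.
\end{itemize}
Applying $\Phi_{f_\ep}$ to the example for (2), to $\T^2\times B^2(1)$, to (5) and to (7) gives (3), (4), (6) and (8) respectively.

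\textbf{Region (1) needs its own example.} The $\rhoT=\frac13$ triangle lies in (5), outside $\mathcal S$. You still need a non-symmetric convex fiber with $\rhoT\le\frac14$, for instance the triangle with vertices $(0,1),(1,-1),(-1,-1)$, which has area $2$ and an orbit of action $1$.
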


Note that neither Figure \ref{fig:comparison} nor Figure \ref{fig:diagram} encodes the action of symplectomorphisms. In our setting of $T^*\T^2$, due to the non-triviality of $\pi_1(T^*\T^2) = \pi_1(\T^2) ( = H_1(\T^2; \Z))$, we will mainly consider {\it exact} symplectomorphisms $\varphi$ in the sense that \begin{equation}\label{eq-exact-H1}
  \varphi^*\lambda_{\rm can}-\lambda_{\rm can} =df  \text{ for some $f\in C^\infty (T^*\T^2) $.   } 
\end{equation} 
The following result serves a parallel conclusion to Example 1.4 in \cite{Hut24}, which distinguishes (the interiors of) a codisc bundle of tori  $\mathcal{F}$ from those in $\mathcal{F}_{\rm flat} $ even under exact symplectomorphisms. To this end, we compute the dynamical zeta function $\zeta$ for product domains $\mathcal{T}_{T^*\T^2}$ introduced in Definition 1.5 and Definition 1.11 in \cite{Hut24}. We emphasize that by Theorem 1.16 in \cite{Hut24}, in general dynamical zeta function is invariant only under exact symplectomorphisms (which also emphasizes the necessity of (\ref{eq-exact-H1}) above). 
 
\begin{thm}\label{thm-non-flat}
    There exists a fiberwise star-shaped domain $U \in \mathcal F$ such that
the  interior ${\rm int}(U)$ is {\it not} exact symplectomorphic to the interior of any domains in $\mathcal{F}_{\rm flat}$.
\end{thm}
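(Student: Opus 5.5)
The plan is to imitate Example 1.4 of \cite{Hut24}. We compute the dynamical zeta function $\zeta$ (Definitions 1.5 and 1.11 of \cite{Hut24}) for every element of $\mathcal{F}_{\rm flat}$, extract a property that all such zeta functions share, and then exhibit a codisc bundle $U=D_F^*\T^2$ of a non-flat Finsler (or Riemannian) metric whose zeta function lacks that property. Theorem 1.16 of \cite{Hut24} makes $\zeta$ invariant under exact symplectomorphisms of interiors, so this settles the claim.

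The first step is the flat computation. For $X=\T^2\times A$ with $A$ convex and smooth boundary, the Reeb flow on $\T^2\times\partial A$ is linear on each torus $\T^2\times\{p\}$, with direction the outward normal $n(p)$. Closed orbits therefore occur exactly at points $p$ where $n(p)$ is proportional to an integer vector. They come in $\T^2$-families: for each primitive $v\in\Z^2$ and each $k\ge1$ there is a circle-family (a Morse–Bott $T^2$-family) of action $k\,h_A(v)$, where $h_A(v)=\langle p,v\rangle$ is the support function. Because $A$ is strictly convex, each such family is degenerate in a uniform way: the linearized return map is a shear with trivial rotation. In Hutchings' framework the contribution of such Morse–Bott torus families to $\zeta$ can be computed after a small perturbation, or equivalently directly from the Morse–Bott data. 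My expectation is that each family contributes a factor whose Lefschetz-type sign count vanishes, so that $\zeta\equiv1$ (or some fixed universal form) for all of $\mathcal{F}_{\rm flat}$. Whatever the precise normal form turns out to be, the key point to record is a rigid feature, such as the absence of poles or zeros, or the vanishing of all coefficients.

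The second step is the construction of $U$. Take a Riemannian metric on $\T^2$ that is a small conformal perturbation of the flat metric, $g=e^{2\phi}g_0$, with $\phi$ chosen so that one homotopy class, say $(1,0)$, has an isolated nondegenerate minimizing closed geodesic of hyperbolic type. A bump along a closed geodesic creating a strict minimizer gives an isolated hyperbolic geodesic together with a nearby elliptic one. Closed Reeb orbits on $\partial D_g^*\T^2$ are lifts of closed geodesics. The homotopy class of the Reeb orbits in $T^*\T^2$ records the class of the geodesic, and $\zeta$ for exact symplectomorphisms keeps track of this data. The isolated nondegenerate hyperbolic orbit, together with its iterates, then contributes a nontrivial factor with Lefschetz sign $\pm1$. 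If $\phi$ is chosen so that the other families in that class cancel only partially, for example by breaking the $S^1$-family into exactly one hyperbolic and one elliptic geodesic, the resulting factors are of a type that cannot occur in the flat normal form. One concrete choice: make the contributions in the class $(1,0)$ come from one positive hyperbolic orbit alone below a certain action, which forces a specific nonconstant coefficient in $\zeta$.

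The main obstacle is making the flat computation rigorous. The orbit sets are degenerate Morse–Bott $T^2$-families, and Hutchings' definition requires either nondegeneracy or a limiting and perturbation argument. I would first check whether \cite{Hut24} already treats Morse–Bott torus families, as in his toric computations, where toric domains have exactly such families. If so, adapt that proof verbatim; if not, perturb by a small function on $\T^2\times\partial A$ that is Morse on each torus family and compute the signed count of the resulting four nondegenerate orbits, which should cancel. A secondary difficulty is matching actions: the invariance holds only for $\zeta$ as a formal series graded by action and homology class. So for the non-flat example one must ensure the distinguishing term genuinely differs from every flat $\zeta$, not merely from one of them.
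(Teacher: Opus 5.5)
Your overall strategy matches the paper's: show $\zeta\equiv 1$ on all of $\mathcal F_{\rm flat}$ by Morse--Bott cancellation, exhibit a codisc bundle $U\in\mathcal F$ with $\zeta(U)\neq 1$, and invoke Theorem 1.16 of \cite{Hut24}. Once $\zeta\equiv1$ is established, your worry about differing from \emph{every} flat zeta function disappears. Your first step is essentially Proposition~\ref{prop-zeta-func}, with two corrections.
First, each torus $\T^2\times\{p\}$ is foliated by closed orbits, so it is an $S^1$-family of orbits. The relevant Morse function lives on the orbit space $S^1$, not on $\T^2$, and the Morse--Bott perturbation yields two orbits, not four: one elliptic and one positive hyperbolic, with opposite Lefschetz signs.
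Second, you cannot assume $A$ is strictly convex. For a general convex $A$ (e.g.\ one with flat edges of rational slope) the families are not Morse--Bott, and you must pass to an $L$-approximation. The paper shrinks $A$ and rotates it by a generic angle, using Sard's theorem for the Gauss map, so that the shear coefficient $f_{e_2(y)}(p)$ is nonzero at every point with rational normal.

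The genuine gap is in your second step. The zeta function whose exact-symplectic invariance is Theorem 1.16 of \cite{Hut24} is a series in $t$ that records only actions; it does not remember homotopy classes. Since $\log\zeta=\sum_\gamma \epsilon(\gamma)\,t^{\mathcal A(\gamma)}/d(\gamma)$, proving $\zeta\neq1$ requires an action value at which the signed count of \emph{all} closed Reeb orbits, over all classes in $H_1(\T^2;\Z)$, is nonzero. Your plan has three problems on this point.
\begin{itemize}
\item Controlling only the class $(1,0)$, as in your ``one positive hyperbolic orbit alone below a certain action'', does not rule out an elliptic orbit of another class sitting at the same action and cancelling it. For a small conformal perturbation of the square torus, the classes $(0,\pm1)$ also have length close to $1$.
\item For a Riemannian metric no orbit is ever alone at its action, since the reverse geodesic $\bar\gamma$, in class $(-1,0)$, has the same length.
\item You only make the minimizer nondegenerate, so the product formula of Example 1.7 of \cite{Hut24} is not yet available.
\end{itemize}
The missing idea is to force the \emph{globally} shortest closed Reeb orbit to be unique and nondegenerate. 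Then the coefficient of the lowest power of $t$ is $\pm1$ and cannot cancel.

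The paper does exactly this. For $g=a(\theta)^2(d\varphi^2+d\theta^2)$, with $a$ having a unique minimum $a(\theta_0)=1$, it proves that the parallel $\theta=\theta_0$ is shorter than every other closed geodesic in every class. The Randers metric $\sqrt{g}-\ep\,d\varphi$ has the same geodesics, with lengths $l_g(\sigma)-m\ep$; this breaks the tie with the reverse geodesic ($1-\ep$ versus $1+\ep$). A bumpy perturbation then makes every orbit nondegenerate.

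Your Riemannian route could also be repaired. The return map of $\bar\gamma$ is conjugate to the inverse of that of $\gamma$, which has the same trace in $\mathrm{Sp}(2)$, hence the same Lefschetz sign; so the lowest coefficient would be $\pm2$. But this only works after you prove that no other closed geodesic, in any class, has the same length as $\gamma$, and that the bumpy perturbation preserves this.
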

\subsection{Large-scale geometry on $T^*\T^n$} \label{ssec-large} 
Recall that given compact  star-shaped domains $U,V\subset\R^{2n}$,  the \emph{coarse Banach-Mazur distance} (cf.~Definition 1.3 in \cite{Ush22})  is defined as
\begin{equation}\label{eq-CBM}
    d_{\rm BM}(U,V)\coloneqq
\inf\left\{\ln C>0 \mid 
U\hookrightarrow C\cdot V \,\,\mbox{and}\,\,
V\hookrightarrow C\cdot U\,\right\},
\end{equation} 
where $C\cdot U$ denotes the scaling of $U$ by $C>0$ via flowing along the radial vector field $X_{\rm rad} = \frac{1}{2}\sum_{i=1}^n \left(x_i \partial_{x_i} + y_i \partial_{y_i}\right)$ in $\R^{2n} = \R_{2n}(x_1, \ldots, y_n)$, and ``$\hookrightarrow$'' denotes the existence of a symplectic embedding. For $T^*\T^n$, the same definition applies to star-shaped domains in $T^*\T^n$, simply changing $X_{\rm rad}$ to the vector field $\sum_{i=1}^n p_i \partial_{p_i}$ where $p_i$'s denote the coordinates of co-vectors. Again, ``$\hookrightarrow$'' denotes the existence of a symplectic embedding (and {\it not} necessarily exact). 

Meanwhile, given two (pseudo-)metric spaces $(X,d)$ and $(Y,d')$, a map $\varphi: X \to Y$ is called a \emph{quasi-isometric embedding} if there are constants $A\ge1 ,B\geq 0$ such that
\[\frac{1}{A}\cdot d(x_1,x_2)-B\leq d'(\varphi(x_1),\varphi(x_2))\leq A\cdot d(x_1,x_2)+B\]
for any $x_1,x_2\in X.$ In particular, $\varphi: X \to Y$ is called an {\it isometric embedding} if $A = 1$ and $B =0$. 

Based on the large-scale geometry of the set of monotone toric domains $\mathcal{M}$ with respect to the coarse Banach-Mazur distance $d_{\rm BM}$ in Theorem 1.2 of \cite{DR23}, our next main result confirms the conclusion in the setting of $(T^*\T^2, \omega_{\rm can})$: 

\begin{thm}\label{thm-large-scale-one}
    For any positive integer $N\in \N$, there exists a quasi-isometric embedding from the metric space $(\R^N,|\cdot|_{\infty})$ to the pseudo-metric space that consists of all fiberwise star-shaped domains in $(T^*\T^2,\omega_{\rm can})$ equipped with coarse Banach-Mazur distance $d_{\rm BM}.$
\end{thm}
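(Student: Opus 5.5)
Following \cite{DR23}, the plan is to build the embedding out of product domains $\T^2\times A$ and control $d_{\rm BM}$ from both sides. For the lower bound we use invariants that are monotone under symplectic embeddings and homogeneous of degree one under the fiber scaling $p\mapsto Cp$. The upper bound will come from inclusions of the fibers.

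\textbf{Construction.} Fix $N$. For $v=(v_1,\dots,v_N)\in\R^N$, let $A_v\subset\R^2$ be a star-shaped domain, say lying in the positive quadrant near a corner after the identification (\ref{eq-emb-one}), built from $N$ pieces. Piece $i$ realizes a prescribed "width'' $e^{v_i}$ in a fixed direction, or equivalently prescribes the area of a triangle-like region. The intended model is the family of \cite{DR23}. Take a monotone toric domain $X_{\Omega_v}$ from \cite{DR23} giving a quasi-isometric embedding of $(\R^N,|\cdot|_\infty)$ into $(\mathcal M,d_{\rm BM})$, and set $A_v={\rm int}(\Omega_v)$, smoothed if needed. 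The map is $\Phi(v)=\T^2\times A_v$.

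\textbf{Upper bound.} If $\Omega_v\subset C\cdot\Omega_w$ with $\ln C\le |v-w|_\infty+B$, then $\T^2\times A_v\subset C\cdot(\T^2\times A_w)$ by inclusion. The same holds with $v$ and $w$ swapped, so $d_{\rm BM}(\Phi(v),\Phi(w))\le|v-w|_\infty+B$. This step needs only the construction of $\Omega_v$ to be monotone in each parameter and to scale coordinatewise. I would arrange this directly, for instance $\Omega_v=\bigcup_i$ of regions with corner points depending on $e^{v_i}$ and normalized by a fixed scaling.

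\textbf{Lower bound (main obstacle).} We need an invariant $c$ on domains in $T^*\T^2$ that is monotone under arbitrary, possibly non-exact, symplectic embeddings, homogeneous of degree one, and computable on product domains. The intent is that the family $\{\ln c_k(\Phi(v))\}_k$ distinguishes the coordinates $v_i$ up to additive constants. I would first try ECH capacities. The paper's Corollary \ref{cor-capacity-BM} (and \cite{Mcd25}, Theorem 1.2.1) indicates that ECH capacities of $\T^2\times{\rm int}(\Omega)$ agree with those of $X_\Omega$ under suitable conditions, for instance the embedding (\ref{eq-emb-one}) together with a removal-of-singular-fibers argument. The ECH capacities of $X_\Omega$ are known combinatorially, and \cite{DR23} extract from them exactly the lower bound $d_{\rm BM}\ge\frac1A|v-w|_\infty-B$. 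ECH capacities are monotone under all symplectic embeddings of 4-dimensional Liouville domains; since $T^*\T^2$ domains are not star-shaped in $\R^4$, this needs checking, and ECH of the contact boundary $\T^2\times\partial A$ should work. Granting this, $\T^2\times A_v\hookrightarrow C\cdot(\T^2\times A_w)$ gives $c_k(X_{\Omega_v})\le C\,c_k(X_{\Omega_w})$ for all $k$. The argument of \cite{DR23} then yields $\ln C\ge \frac1A|v-w|_\infty-B$.

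The delicate point is verifying that the hypotheses of Corollary \ref{cor-capacity-BM} hold for the \cite{DR23} family, or modifying $\Omega_v$ so that they do while keeping the \cite{DR23} estimates. As a backup, I would use the shape invariant of $\T^2\times A$. For product domains it equals $A$, up to $GL(2,\Z)$ and translation, under exact embeddings. Non-exact embeddings shift the shape by an element of $H^1$, so comparing inclusions of shapes up to translation would give a lower bound by the support-function widths of $A_v$ in finitely many integral directions. In that case I would design $A_v$ so that these widths are $e^{v_i}$ times constants.
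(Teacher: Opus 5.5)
Your construction and upper bound are the paper's. It also takes the concave (hence monotone) toric domains $X_{\Omega_v}$ of \cite{DR23}, sends $v\mapsto \T^2\times\Omega_v$, and bounds $d_{\rm BM}$ above by the inclusion distance. The difference is in the lower bound, and there your route has one unjustified link. You need the lower bound of \cite{DR23} to come \emph{only} from inequalities $c_k^{\rm ECH}(X_{\Omega_v})\le C\,c_k^{\rm ECH}(X_{\Omega_w})$. That is a claim about how \cite{DR23} proves its theorem, not about what it states, and you have not checked it. Also, if you redesign $\Omega_v$ to secure the inclusion bound, as you suggest, you can no longer quote \cite{DR23} for the lower bound at all.

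The paper avoids this with the second equality of Corollary \ref{cor-capacity-BM}, which follows formally from Proposition \ref{prop-toric}. Up to $\ep$-adjustments between $\Omega$ and $\mathrm{int}(\Omega)$, an embedding $\T^2\times\Omega_v\hookrightarrow C\cdot(\T^2\times\Omega_w)$ yields
\[(1-\ep)\cdot X_{\Omega_v}\hookrightarrow \T^2\times\mathrm{int}(\Omega_v)\hookrightarrow C\cdot(\T^2\times\mathrm{int}(\Omega_w))\hookrightarrow C\cdot X_{\Omega_w},\]
and the reverse direction works symmetrically. Hence $d_{\rm BM}(\T^2\times\Omega_v,\T^2\times\Omega_w)=d_{\rm BM}(X_{\Omega_v},X_{\Omega_w})$, and the theorem of \cite{DR23} transfers as a black box. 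This equality even gives the upper bound, so the inclusion step is optional. You cite the capacity half of that corollary but not the distance half, which is exactly what you need.

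The worries you raise also dissolve. Monotonicity is the only hypothesis of Corollary \ref{cor-capacity-BM}, and concave toric domains satisfy it. ECH capacities are monotone under arbitrary symplectic embeddings of $4$-dimensional Liouville domains, with no exactness and no star-shapedness in $\R^4$ required. If the lower bound of \cite{DR23} is indeed ECH-based, your argument is a valid but more fragile variant.

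The shape-invariant backup is unnecessary. As sketched it also overlooks that non-$H^1$-trivial embeddings, which $d_{\rm BM}$ allows, act by $\mathrm{GL}(2,\Z)$. They therefore permute any fixed finite set of integral directions, so widths in those directions are not invariants.
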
 

Theorem \ref{thm-large-scale-one} above considerably improves the conclusion of Theorem 1.11 of \cite{SZ21} for the case $\T^2 = \Sigma_{g = 1}$, in the sense that it removes all the extra ``decorations'' in the definition of a weaker pseudo-distance, (fine) symplectic Banach-Mazur distance denoted by $d_{\rm SBM}$, widely used in \cite{SZ21}. These decorations include the following three topological conditions: (i) symplectic embeddings from ``$\hookrightarrow$'' induce trivial actions on the set of homotopy classes of free loops in $\T^2$; (ii) symplectic embeddings need to be exact as in (\ref{eq-exact-H1}); (iii) compositions of certain embeddings need to be ``unknotted'' (see Definition 1.2 in \cite{SZ21}, Definition 1.3 (ii) in \cite{Ush22}). 

 In the setting of cotangent bundle of higher-dimensional torus $(T^*\T^n, \omega_{\rm can})$, we define a pseudo-distance that is analogue to $d_{\rm SBM}$, but only with topological conditions (i) and (ii) above. 
 
 \begin{dfn}\label{dfn-HBM}
  Let  $U,V\subset (T^*\T^n,\omega_{\rm can}=d\lambda_{\rm can})$ be fiberwise star-shaped domains. The {\bf homological Banach-Mazur  distance} between $U$ and $V$ is defined by
\[d_{\rm HBM}(U,V)\coloneqq \inf \left\{\ln C>0 \,\Big | \, U\xhookrightarrow{H^1-\text{trivial}} C\cdot V \,\,\mbox{and}\,\, V\xhookrightarrow{H^1-\text{trivial}} C\cdot U\right\} . \]
Here $\xhookrightarrow{H^1-\text{trivial}}$ denotes an  exact $H^1$-trivial symplectic embedding,  meaning that the embedding  $\varphi$ satisfies (\ref{eq-exact-H1}) and 
\begin{equation}\label{eq-H1}
  \varphi^*|_{H^1(\T^n;\R)}=\mathrm{id.}   
\end{equation} 
\end{dfn}

Curiously, Definition \ref{dfn-HBM} applies to star-shaped toric domains in $X_\Omega\subset (\R^{2n},\omega_{\rm std})$ in the following way. Denote by $\mathrm{int}(\Omega)\subset \R^n_{>0}$ the interior of $\Omega$ that is disjoint from axes, then 
\[X_{\mathrm{int}(\Omega)}\coloneqq\mu^{-1}(\mathrm{int}(\Omega))\subset X_\Omega\]
where $\mu$ is the moment map given in (\ref{eq-toric-domain}). By  Proposition \ref{prop-toric} below,   $(X_{\mathrm{int}(\Omega)},\omega_{\rm std})$  is symplectomorphic to $(\T^n\times\mathrm{int}(\Omega)  ,\omega_{\rm can})$. In particular, the topology of $X_{\mathrm{int}(\Omega)}$ is non-trivial (while $X_{\Omega}$ contracts to a point), and compared with $X_{\Omega}$ the domain $X_{\mathrm{int}(\Omega)}$ loses the part $X_{\Omega} \cap \left(\cup_{i=1}^n \mbox{\{$z_i$-plane\}}\right)$ where $\R^{2n}$ is identified with $\C^n$ via $z_i = x_i + \sqrt{-1} y_i$. Then we have the following definition, parallel to Definition \ref{dfn-HBM}: 
\begin{dfn}\label{dfn-HBM-toric}
    Given toric domains $X_\Omega, X_{\Omega'}\subset (\R^{2n},\omega_{\rm std})$, their {\bf homological Banach-Mazur distance}  is defined by
\begin{equation}\label{eq-dfn-HBM-toric}
    \begin{aligned}
d_{\rm HBM}(X_\Omega, X_{\Omega'})
\coloneqq \inf \left\{\ln C>0 \ \middle|\ 
\begin{array}{l}
X_{\mathrm{int}(\Omega)} \xhookrightarrow{H^1\text{-trivial}} C\cdot X_{\mathrm{int}(\Omega')},\\
X_{\mathrm{int}(\Omega')} \xhookrightarrow{H^1\text{-trivial}} C\cdot X_{\mathrm{int}(\Omega)}
\end{array}
\right\}
\end{aligned}
\end{equation} 
where $\xhookrightarrow{H^1-\text{trivial}}$ denotes an exact and $H^1$-trivial  symplectic embedding  satisfying (\ref{eq-exact-H1}) and (\ref{eq-H1}) between $(\T^n\times\mathrm{int}(\Omega) ,\omega_{\rm can})$ and $ (\T^n\times\mathrm{int}(\Omega') ,\omega_{\rm can})$. 
\end{dfn}
\begin{remark}\label{remark-HBM}  
Definition \ref{dfn-HBM} and Definition \ref{dfn-HBM-toric} indeed define pseudo-metrics, where the sub-additivity comes from the fact that the composition of exact $H^1$-trivial  symplectic embeddings is still exact and $H^1$-trivial. We leave details of the verification to interested readers. 

Note that in Definition \ref{dfn-HBM-toric}, the domain $X_{\mathrm{int}(\Omega)}\subset\R^{2n}$ is {\it not}  star-shaped. Instead,   the scaling in (\ref{eq-dfn-HBM-toric}) is performed on the fiberwise star-shaped domain  $\T^n\times \mathrm{int}(\Omega)\subset T^*\T^n$, and it acts in  the same manner as the rescaling described below equation~\eqref{eq-CBM}.
\end{remark}

 \begin{remark}\label{rmk-Ham} One can obtain exact $H^1$-trivial symplectic embeddings appearing in Definition \ref{dfn-HBM} and Definition \ref{dfn-HBM-toric} from Hamiltonian dynamics. In fact, any Hamiltonian diffeomorphism $\varphi\in \mathrm{Ham}(T^*\T^n,\omega_{\rm can})$ is an exact and $H^1$-trivial symplectomorphism.
Let $\{\varphi_t\}_{t\in[0,1]}$ be a Hamiltonian isotopy with  
Hamiltonian function $H_t\colon [0,1]\times T^*\T^n\to \R$ and $\varphi_0 = \mathrm{id}$, $\varphi_1 = \varphi$.  
Define $\alpha_t := \varphi_t^*\lambda_{\rm can} - \lambda_{\rm can}.$
We compute the time derivative:
\[
\frac{d}{dt}\alpha_t   
= \varphi_t^*(\mathcal{L}_{X_{H_t}}\lambda_{\rm can})=\varphi_t^*(d(\iota_{X_{H_t}}\lambda_{\rm can}) + \iota_{X_{H_t}}d\lambda_{\rm can})=d\bigl(\varphi_t^*(\iota_{X_{H_t}}\lambda_{\rm can} - H_t)\bigr),
\]
where $\mathcal{L}$ denotes the Lie derivative.   
Thus $\frac{d}{dt}\alpha_t$ is exact for all $t$.  Integrating from $0$ to $1$ and using
$\alpha_0=0$, we obtain
\[
\alpha_1
= \int_0^1 \frac{d}{dt}\alpha_t\,dt
= d\left(\int_0^1 \varphi_t^*(\iota_{X_{H_t}}\lambda_{\rm can} - H_t)\,dt\right).
\]
Hence
$\varphi^*\lambda_{\rm can} - \lambda_{\rm can} = \alpha_1 = d\left(\int_0^1 \varphi_t^*(\iota_{X_{H_t}}\lambda_{\rm can} - H_t)\,dt\right)$
so that $\varphi$ is exact, satisfying (\ref{eq-exact-H1}). It is also $H^1$-trivial since  $\varphi$ is smoothly isotopic to the identity so that it induces the identical map on cohomology, thus satisfying (\ref{eq-H1}).  \end{remark}

%Large-scale geometric results for (fiberwise) star-shaped domains are often derived from various types of symplectic invariants that capture the rigidity of symplectic embeddings.
%Among them, \emph{shape invariants} (see Definition \ref{dfn-shape} and relevant studies in \cite{Yak91,HZ21,HZ23,RZ21}) arise from obstruction to Lagrangian embeddings and encode higher dimensional    information compared to symplectic capacities. 
 %Using  shape invariant techniques for product domains in $(T^*\T^n,\omega_{\rm can})$ (see Theorem 2.4.1 in  \cite{Yak91} as a reformulation of Theorem 3 in \cite{Sik89}), we have the following results on distance $d_{\rm HBM}$:

 With respect to $d_{\rm HBM}$, we have the following large-scale geometry results. We emphasize that conclusion (i) in Theorem \ref{thm-large-scale-two} serves as a generalization of Theorem \ref{thm-large-scale-one} to higher-dimensional cases; (ii) in Theorem \ref{thm-large-scale-two}, serves as a generalization of the main result, Theorem 1.2 of \cite{DR23}, to higher-dimensional cases (but within toric domains). 
 
  \begin{thm}\label{thm-large-scale-two}
  For any $N \in \N$ and $n\geq 2$:
  \begin{itemize}
      \item [(i)] there exists an isometric embedding from  $(\R^{N} ,|\cdot|_{\infty})$ to  fiberwise star-shaped domains in $(T^*\T^n,\omega_{\rm can})$   equipped with $d_{\rm HBM};$  
      \item [(ii)]there exists an isometric embedding from   $(\R ^N_{\geq 0},|\cdot|_{\infty})  $  to toric domains in $(\R^{2n}, \omega_{\rm std})$  equipped with $d_{\rm HBM}.$  
  \end{itemize}
  \end{thm}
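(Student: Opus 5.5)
The plan is to construct explicit families of product domains $\T^n\times A_v$, with $v\in\R^N$, and to control $d_{\rm HBM}$ from both sides. Upper bounds will come from inclusions; lower bounds will come from an obstruction that sees the full $H^1$-class data. The natural obstruction is the shape invariant in the sense of Sikorav/Eliashberg (Theorem 2.4.1 in Yakovlev, reformulating Sikorav's Theorem 3). For a product domain $\T^n\times A$ with $A$ star-shaped and open, an exact $H^1$-trivial symplectic embedding $\T^n\times A\hookrightarrow \T^n\times B$ exists if and only if $A\subset B$. The ``only if'' direction holds because exact $H^1$-trivial embeddings preserve the Liouville class of the Lagrangian tori $\T^n\times\{p\}$. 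The image torus has class $p$ and is isotopic to the zero section in the right homology class, so Sikorav's theorem forces $p\in B$. The ``if'' direction is just inclusion. Since the scaling $C\cdot(\T^n\times A)=\T^n\times CA$, this gives
\[
d_{\rm HBM}(\T^n\times A,\T^n\times B)=\inf\{\ln C \mid A\subset CB,\ B\subset CA\},
\]
which is the classical Banach--Mazur-type gauge distance between star-shaped sets in $\R^n$, with no linear maps quotiented.

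For (i), I then only need an isometric embedding of $(\R^N,|\cdot|_\infty)$ into star-shaped sets in $\R^n$ ($n\ge2$) with this gauge distance. Choose $N$ distinct unit directions $\theta_1,\dots,\theta_N$ in $\R^n$ with disjoint small cones around them. For $v=(v_1,\dots,v_N)$, let $A_v$ be the star-shaped set whose radial function $r_v$ equals $e^{v_i}$ in direction $\theta_i$, equals $1$ away from the cones, and in between is defined by $\log r_v$ interpolating, e.g. $\log r_v(\theta)=\sum_i v_i\phi_i(\theta)$ with bumps $0\le\phi_i\le1$, $\phi_i(\theta_i)=1$, disjoint supports. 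Then $A\subset CB$ if and only if $\sup_\theta(\log r_A-\log r_B)\le\ln C$. Hence the distance is $\max\bigl(\sup(\log r_v-\log r_w),\sup(\log r_w-\log r_v)\bigr)$. This equals $\max_i|v_i-w_i|$, because each $\phi_i$ attains the value $1$ and the supports are disjoint. Smoothing of boundaries is harmless. If fibers must contain the origin in the interior, that holds automatically.

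For (ii), use Proposition \ref{prop-toric}: $X_{{\rm int}(\Omega)}\cong\T^n\times{\rm int}(\Omega)$, so $d_{\rm HBM}(X_\Omega,X_{\Omega'})$ is the same gauge distance between ${\rm int}(\Omega)$ and ${\rm int}(\Omega')$. These are star-shaped with respect to $0$ inside the positive orthant. Apply the same construction with directions $\theta_i$ in the open positive orthant and radial functions taking values $\ge 1$ there, which is why the domain is restricted to $\R^N_{\ge0}$. A natural choice is bumps adding $v_i\ge0$ to $\log r$ over the standard simplex-like region, keeping $\Omega$ a genuine toric domain. The same computation gives exactly $|v-w|_\infty$.

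The main obstacle is the lower bound via the Sikorav-type rigidity. Care is needed in two places. First, exactness plus $H^1$-triviality must guarantee that the image of $\T^n\times\{p\}$ has Liouville class exactly $p$. Exactness gives $\int_{\varphi(\gamma)}\lambda=\int_\gamma\lambda$, and $H^1$-triviality identifies the homology classes. Second, Sikorav's theorem must apply to Lagrangian tori that are merely homologous, not necessarily Hamiltonian isotopic, to the zero section, with fiberwise star-shaped targets that are only open sets (exhaust by compact sublevels). I would invoke the shape invariant formulation of Eliashberg/Sikorav directly: the shape of $\T^n\times B$ relative to the standard $H^1$-identification is $B$ itself, and shape is monotone under exact $H^1$-trivial embeddings.
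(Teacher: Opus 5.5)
Your proposal is correct and follows essentially the same route as the paper. The lower bound comes from the shape invariant identity $\operatorname{Sh}(\T^n\times A)=A$ (Lemma \ref{lemma-shape}) together with its monotonicity under exact $H^1$-trivial embeddings, the upper bound from inclusions, so that $d_{\rm HBM}(\T^n\times A,\T^n\times B)=d_I(A,B)$; in (ii), Definition \ref{dfn-HBM-toric} reduces the toric case to the same product-domain computation. The only real difference is your explicit family: you use smooth radial bumps $\log r_v=\sum_i v_i\phi_i$ on disjoint cones instead of the paper's polygons $P_v\times[0,1]^{n-2}$. This is a harmless and arguably cleaner choice, since it keeps fibers smooth with the origin in the interior, and it works for all $v\in\R^N$ in (ii), so your stated reason for restricting to $\R^N_{\ge 0}$ is not actually needed.
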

  \begin{remark}
      By  Lemma 8.13 in \cite{SZ21}, for any $N\in \N$ there exists a  quasi-isometric embedding from $(\R ^N ,|\cdot|_{\infty})  $ to $(\R ^{2N}_{\geq 0},|\cdot|_{\infty})  $. Consequently, (ii) in Theorem \ref{thm-large-scale-two}  implies the existence of a quasi-isometric embedding from $(\R ^N ,|\cdot|_{\infty})  $ to the set of
toric domains $\mathcal{T}_{\R^{2n}}$ in $(\R^{2n}, \omega_{\rm std})$  equipped with $d_{\rm HBM}.$  
  \end{remark}
  
\subsection{Normalized capacity in $T^*\T^n$}\label{ssec-nor-cap}
Recall a \emph{ball-normalized symplectic capacity} is a function $c(\cdot)$ on the set of $2n$-dimensional symplectic manifolds
satisfying the following conditions: 

\noindent (i) $c(M_1 ,\omega_1)\leq c(M_2,\omega_2)$ if $(M_1,\omega_1)$ symplectically embeds into $(M_2,\omega_2)$;

\noindent (ii) $c(M,a\omega)=a\cdot c(M,\omega)$ for any $a>0$;  

\noindent (iii) $c(B^{2n}(r),\omega_{\rm std})=\pi r^2=c(Z^{2n}(r),\omega_{\rm std}),$
where $B^{2n}(r)$ is the symplectic ball of radius $r$, and $Z^{2n}(r) := B^2(r)\times \R^{2n-2}\subset \R^{2n}$ is the symplectic cylinder. 

It is easily verified that the existence of a ball-normalized symplectic capacity is equivalent to the celebrated Gromov non-squeezing theorem in \cite{Gro85}. As a consequence, the first ball-normalized symplectic capacity is the Gromov width denoted by $c_{\rm Gr}$ defined as follows:
 \[c_{\rm Gr}((M, \omega)):= \sup\left\{\pi r^2 \mid \exists \text{ symplectic embedding $(B^{2n}(r),\omega_{\rm std})\hookrightarrow (M, \omega)$} \right\}.\] 
In general, computing the exact value of $c_{\rm Gr}$ for a given symplectic manifold $(M, \omega)$ is challenging (for related results, see \cite{KT05,LMS13,Sch17,FRV26}). One way to overcome this difficulty is to equate $c_{\rm Gr}$ with some other capacities, which leads to the following discussion. 

\medskip
 
The next result confirms the coincidence of all ball-normalized capacities for domains (viewed as symplectic manifolds with decent symplectic structure) in a certain family of fiberwise star-shaped domains in $T^*\T^2$. 

\begin{thm}\label{thm-normalized}
Let $c$ be a ball-normalized symplectic capacity  and     $D_F^*\T^2 \in \mathcal F_{\rm rev}$   be the codisc bundle induced by a flat reversible Finsler metric $F$ on $\T^2$.  Assume that the metric $F$ satisfies either one of the following conditions:
\begin{itemize}
    \item [(i)] $F(q_1,q_2,v_1,v_2)=F(q_1,q_2,v_1,-v_2)$ for any $(q_1,q_2,v_1,v_2)\in T\T^2 $;
    \item [(ii)] $\rhoT(D_F^*\T^2)\leq \frac{1}{8}$.
\end{itemize} 
Then we have $c(D_F^*\T^2)=2\mathrm{sys}(\partial D_F^*\T^2).$ 
 \end{thm}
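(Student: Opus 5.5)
Since every ball-normalized capacity is bounded below by $c_{\rm Gr}$ and above by the cylindrical capacity $c^Z$, the plan is to prove the two inequalities
\[
c_{\rm Gr}(D_F^*\T^2)\ \geq\ 2\,\mathrm{sys}(\partial D_F^*\T^2) \quad\text{and}\quad c^Z(D_F^*\T^2)\ \leq\ 2\,\mathrm{sys}(\partial D_F^*\T^2).
\]
Write $D_F^*\T^2=\T^2\times A$ with $A\subset(\R^2)^*$ convex and centrally symmetric. By the action formula (\ref{eq-action}), closed Reeb orbits on $\T^2\times\partial A$ lie on tori $\T^2\times\{p\}$ where the outward normal of $\partial A$ at $p$ is a rational direction $k\in\Z^2$, and their action equals $\langle p,k\rangle=h_A(k)$, the support function of $A$. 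Hence
\[
\mathrm{sys}=\min_{k\in\Z^2\setminus\{0\}}h_A(k)=\min_k \kappa_A(k),
\]
the length of the shortest closed $F$-geodesic. Since $F$ is flat, the relevant classes can be handled after applying a linear map in $GL_2(\Z)$, which acts as an exact symplectomorphism of $T^*\T^2$ and changes neither capacities nor systole. I will therefore normalize so that the systole is realized by $k=e_2$. In other words, the strip $\{|p_2|\le s\}$ with $s=\mathrm{sys}$ is a supporting slab of $A$ in the $e_2$-direction.

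\textbf{Upper bound.} The set $\T^2\times A$ is contained in $\T^2\times\{(p_1,p_2): |p_2|\le s\}$. Pass to the cover that unwraps $q_1$, where the $(q_1,p_1)$ factor becomes $T^*\R$. The $(q_2,p_2)$ factor is then the annulus $S^1\times[-s,s]$ of area $2s$. The cover does not change the embedding into the target, because $q_1$ is paired with the unbounded coordinate $p_1$. The domain therefore embeds into $\R^2\times(\text{area }2s\text{ annulus})$. The annulus embeds into a disc of area $2s+\epsilon$, so the domain lies in a cylinder $Z(2s+\epsilon)$. This gives $c^Z\le 2s$ unconditionally for all of $\mathcal F_{\rm rev}$.

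\textbf{Lower bound.} This step is the main obstacle, and it is where conditions (i) and (ii) enter. The idea is to use Proposition \ref{prop-toric}, which identifies $\T^2\times\mathrm{int}(\Omega)$ with $X_{\mathrm{int}(\Omega)}$. Combined with the Traynor-type trick, this shows that $X_{\mathrm{int}(\Omega)}$ contains balls of capacity comparable to the largest triangle $\{x_1+x_2<a\}$ that fits after translation. Concretely, the open triangle $T(a)=\{(p_1,p_2):\ p_1,p_2>-\tfrac a2\ ,\ p_1+p_2<0\}$, suitably translated, gives $B^4(a)$ minus its axes, and this is still enough for Gromov width by the standard Traynor argument.

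So the goal is to find a lattice-unimodular affine image of a triangle of size $a=2s$ inside $A$. By central symmetry this amounts to a planar convex-geometry statement: $A$ contains a triangle with vertices $v$, $v+2s e_1'$, $v+2s e_2'$ for some $\Z$-basis $(e_1',e_2')$.

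Under (i), $A$ is symmetric in both coordinate axes. Its width in directions $e_1$ and $e_2$ is at least $2\,\mathrm{sys}$, so $A$ contains the diamond with vertices $(\pm s,0)$ and $(0,\pm s)$. This diamond contains the triangle with vertices $(-s,0)$, $(s,0)$, $(0,s)$. Up to the shear $GL_2(\Z)$, that triangle is the standard triangle of size $2s$, which I will check explicitly.

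Under (ii), I would argue by area. Since $\mathrm{vol}(\partial X)=2\,\mathrm{area}(A)$ with the $\T^2$ area normalized to $1$, condition (ii) gives $\mathrm{area}(A)\ge 4s^2$. Mahler/Minkowski-type planar arguments should then force a suitable triangle of size $2s$ in some unimodular frame inside $A$; this is the reverse direction of Example \ref{ex-convex-hull}.

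I expect the delicate part to be making (ii) rigorous, namely choosing the correct unimodular basis from the reduced basis of the dual lattice norm.
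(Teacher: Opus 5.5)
Your upper bound is correct and is essentially the paper's cylinder argument. After a $GL_2(\Z)$ change of coordinates you have $A\subset\{|p_2|\le s\}$. Since $A$ is compact, $\T^2\times A$ lies in the product of the annuli $S^1\times[-R,R]$ and $S^1\times[-s,s]$ in $T^*S^1\times T^*S^1$. Each annulus embeds area-preservingly into a disc of slightly larger area, so no covering is needed, and this gives $c\le 2s$ on all of $\mathcal F_{\rm rev}$. The paper does this in case (i) and cites \cite{Ben24} in case (ii).

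The gap is the lower bound: reducing it to finding a unimodular triangle of lattice size $2s$ inside $A$ cannot work.

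\emph{Case (i).} Your triangle with vertices $(-s,0),(s,0),(0,s)$ has area $s^2$. Every $GL_2(\Z)$-image of $\{x_1,x_2\ge 0,\ x_1+x_2\le 2s\}$ has area $2s^2$. Indeed, your triangle is not a standard simplex at all, since its edges at $(-s,0)$ have lattice lengths $2s$ and $s$. More fundamentally, take $A=B_1(s)$, the $\ell^\infty$ case, which satisfies (i). Then $\mathrm{area}(B_1(s))=2s^2$, so a ball of capacity $2s$ has the same volume as $\T^2\times B_1(s)$, and the desired bound is a full filling. A triangle inside a parallelogram has at most half its area, so triangle inclusions give at most $\sqrt2\, s$.

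\emph{Case (ii).} The area heuristic also fails. For the ellipse $A=\{p_1^2+p_2^2/b^2\le 1\}$ you have $s=1$ and $\rhoT=1/(2\pi b)$, so (ii) holds for $b\ge 4/\pi$. Yet the largest triangle in $A$ has area $\frac{3\sqrt3}{4}b<2$ whenever $b<\frac{8}{3\sqrt3}$.

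\emph{What is missing.} You need a genuinely non-toric ball embedding. The paper obtains it from McDuff's criterion (Theorem 1.2.1 in \cite{Mcd25}) for balls in the generalized convex toric domain $X_{\mathrm{int}(A+v)}\cong\T^2\times\mathrm{int}(A+v)$.
\begin{itemize}
\item In case (i), the inclusion $B_1(s)\subset A$ reduces the problem to Proposition \ref{prop-ell-norm}. There the weight sequence $(3;1,1,1,1,1)$ turns it into the full packing of a capacity-$3$ ball by five balls of capacity $1$ and one of capacity $2$ \cite{Sch17}.
\item In case (ii), the paper verifies that $c_k^{\rm ECH}(\T^2\times A)$ is at least the $k$-th ECH capacity of the capacity-$2$ ball for every $k$. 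This uses Hutchings' lattice-perimeter formula \cite{Hut11}, the Wulff inequality, Pick's theorem, and $\mathrm{area}(A)\ge 4$ after normalizing $\sys=1$.
\end{itemize}
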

 
 \begin{remark}\label{rmk-thm-E} 
The coincidence conclusion from condition (i) of Theorem \ref{thm-normalized} applies to those flat Finsler metric $F$ induced by $\ell^p$-norms (defined in (\ref{eq-l-p-norm})), namely   $F(q,v)=\|v \|_p$  on $\T^2$ with $p\in [1,\infty].$ Earlier results in this direction include  \cite{Jia93} for codsic bundle $D_F^*\T^n$ where $F$  is the $\ell^p$-norm with $p=1$ and $ n\geq 2$, as well as \cite{Ben24} for  $p\in [1,2]$ and $n\geq 2$. \end{remark}

As an application of Theorem \ref{thm-normalized}, the coincidence conclusion from its condition (ii) can be used to calculate  the Gromov width  $c_{\rm Gr}$ of the tilted cylinder $Y^4(r,v)$ defined above Theorem 1.18 of \cite{GX20}: for $r \in (0, \infty)$, 
\begin{equation} \label{dfn-tilted-cyl} 
Y^4(r,v)\coloneqq \T^2\times \left((-r,r)v\times v^{\perp}\right)
\end{equation}
where $v$ is a unit vector (with respect to the standard Euclidean norm) in $\R^2$, and $(-r, r)v$ is a rescaling of $v$ by scalar $r$, and $v^{\perp}$ denote the direction orthogonal to $v$. Here is the result. We call vector $\alpha=(m,n)\in \Z^2\backslash\{0\}$ {\it prime} if one of the following conditions holds:
\begin{itemize}
    \item [(i)] $m,n\neq 0$ and $m,n$ coprime;
    \item [(ii)] one of $m,n$ is $0$ and the other is $\pm1.$
\end{itemize}

    \begin{cor}\label{cor-cap-cylinder}
   Let $Y^4(r,v)\subset (T^*\T^2,\omega_{\rm can})$ be a  tilted cylinder and let $c $ be any ball-normalized symplectic capacity. If $v$ (as a unit vector) is the scalar multiple of a prime integer vector $\alpha\in \Z^2\backslash\{0\}$, then we have 
    \[c (Y^4(r,v))=2r\|\alpha\| \]
    where $ \|\alpha\|$ denotes the standard Euclidean norm of $\alpha.$ If $v$ is {\it not} a scalar multiple of any integer vector, then 
    \[c (Y^4(r,v))=+\infty.\]
    In particular, the conclusions above holds for the Gromov width $c_{\rm Gr}$. 
\end{cor}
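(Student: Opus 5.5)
We split according to whether $v$ is a rational direction or not.

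\emph{Rational direction.} Suppose $v=\alpha/\|\alpha\|$ with $\alpha=(m,n)$ prime. Since $\gcd(m,n)=1$, we can complete $\alpha$ to a matrix $B\in SL(2,\Z)$ whose first row is $\alpha$. The diffeomorphism of $\T^2$ induced by $B^{T}$ (or by its inverse, depending on conventions) lifts to a cotangent map
\[
(q,p)\mapsto \bigl((B^{T})^{-1}q,\; Bp\bigr).
\]
This map preserves $\lambda_{\rm can}$, so it is an exact symplectomorphism of $T^*\T^2$. Under it the fiber $(-r,r)v\times v^\perp$ becomes a strip
\[
\{p \mid |\langle p,\tilde\alpha\rangle|<r\|\alpha\|\}
\]
for a suitable prime vector $\tilde\alpha$, which we may arrange to be $e_1$ up to a shear. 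In other words, after an integral linear change of the fiber coordinates that is compatible with the torus lattice, $Y^4(r,v)$ becomes $\T^2\times\bigl((-r\|\alpha\|,r\|\alpha\|)\times\R\bigr)$. The remaining task is to compute capacities of the standard cylinder $\T^2\times(-a,a)\times\R$ with $a=r\|\alpha\|$.

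For the upper bound, $\T^1\times(-a,a)\subset T^*S^1$ is an annulus of area $2a$. It embeds into a disc of area $2a+\ep$, so the whole domain embeds into $Z^4$ of capacity $2a+\ep$. Ball normalization then gives $c\le 2a$.

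For the lower bound, we exhaust the cylinder by the domains
\[
D_{F_R}^*\T^2=\T^2\times\bigl([-a,a]\times[-R,R]\bigr),
\]
each smoothed slightly. These are flat reversible Finsler codisc bundles, and the fiber rectangle is symmetric under $p_2\mapsto -p_2$. Hence $F_R$ satisfies condition (i) of Theorem~\ref{thm-normalized} (and, for $R$ large, also condition (ii), since the volume grows linearly in $R$ while the systole stays fixed). Theorem~\ref{thm-normalized} therefore gives $c=2\,\mathrm{sys}$. For a rectangle fiber, the action of the Reeb orbit in direction $\beta\in\Z^2$ is the support function $h(\beta)=a|\beta_1|+R|\beta_2|$. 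The minimum over prime $\beta$ is attained at $\beta=(1,0)$, so $\mathrm{sys}=a$ once $R\ge a$. By monotonicity, $c(Y)\ge 2a$, which yields $c(Y)=2r\|\alpha\|$.

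The main obstacle is making the limiting argument rigorous. Capacities of the open cylinder are only bounded below by monotonicity, and the smoothing of the rectangle corners must not change the systole in the limit. This is handled by the paper's smoothing convention for polygonal fibers.

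\emph{Irrational direction.} If $v$ is not a multiple of any integer vector, the fiber is a strip whose direction $v^\perp$ has irrational slope. We aim for $c=+\infty$ by showing $c(Y)\ge 2\,\mathrm{sys}$ of large inscribed domains. Choose convex, centrally symmetric parallelograms $P_k\subset(-r,r)v\times v^\perp$ that are long in the $v^\perp$ direction, and consider the flat reversible Finsler domain $\T^2\times P_k$. By Theorem~\ref{thm-normalized}(ii), $c=2\,\mathrm{sys}$ provided $\rhoT\le 1/8$.

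The systole of $\T^2\times P_k$ is $\min_{\beta}h_{P_k}(\beta)$ over nonzero integer $\beta$. Since $P_k$ contains the segment $[-L_k,L_k]\,v^\perp$, we have
\[
h_{P_k}(\beta)\ge L_k\,|\langle \beta,v^\perp\rangle|.
\]
Irrationality gives $\langle\beta,v^\perp\rangle\neq0$ for all $\beta\neq0$. Balancing $L_k$ against the small values of $|\langle\beta,v^\perp\rangle|$ shows the systole tends to infinity, using the fact that $P_k$ also has width of order $r$ in the $v$ direction, which controls $h_{P_k}(\beta)$ from below for large $|\beta|$.

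The key difficulty in this case is verifying $\rhoT\le 1/8$ for these $P_k$. The volume is $4\,\mathrm{area}(P_k)\sim 8rL_k$, whereas $\mathrm{sys}^2$ may grow slowly. If this verification fails, we instead use condition (i) after an $SL(2,\Z)$ change of coordinates adapted to a continued-fraction approximant of $v^\perp$. This choice makes a long rectangle inside the strip symmetric, and its systole is at least $\min\bigl(\text{width},\ \text{length}\cdot\text{angle}\bigr)$, which tends to infinity along the approximants. Either route gives $c(Y)=+\infty$, and the statement for $c_{\rm Gr}$ follows because $c_{\rm Gr}$ is a ball-normalized capacity.
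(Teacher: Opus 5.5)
\textbf{Rational case: the upper bound fails.} Your reduction of the rational case is correct and cleaner than the paper's. The cotangent lift $(q,p)\mapsto(Aq,A^{-T}p)$ of a linear automorphism of $\T^2$ preserves $\lambda_{\rm can}$. Choosing $A^{-T}$ with first row $\alpha$, it carries $Y^4(r,v)$ onto $\T^2\times\bigl((-a,a)\times\R\bigr)$ with $a=r\|\alpha\|$. This avoids the paper's separate parallelogram argument via condition (ii) for $m\neq0$. Your lower bound $c\ge 2a$ is also fine: shrink to $[-a+\delta,a-\delta]\times[-R,R]$ so the rectangle lies in the open strip, or simply use $\T^2\times B_1(a-\delta)$ and Proposition~\ref{prop-ell-norm}.

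The upper bound is where the argument breaks. The domain is the product of the annulus $S^1\times(-a,a)$ with the full cylinder $S^1\times\R=T^*S^1$. The cylinder admits no area-preserving embedding into $\R^2$: both of its ends have infinite area, whereas one end of any open annulus in $\R^2$ is bounded and therefore has finite area. So ``the whole domain embeds into $Z^4$'' is unjustified.

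What your idea does give is the following. Every compact subset lies in a truncation $\T^2\times\bigl((-a,a)\times(-P,P)\bigr)$. After translating the fiber into $\R^2_{>0}$, this truncation embeds via $\Phi$ from (\ref{eq-emb-one}) into $Z^4\bigl(\sqrt{(2a+\ep)/\pi}\bigr)$. Hence $c_{\rm Gr}\le 2a$, and the same bound holds for every capacity determined by compact subdomains. It does not give the bound for an arbitrary ball-normalized capacity, such as $M\mapsto\inf\{\pi R^2\mid M\hookrightarrow Z^4(R)\}$; for that you would need an embedding of the whole unbounded domain into a cylinder. The paper's own step for $m=0$ has the same defect, since $\Phi$ requires $p_1>0$ and cannot be applied to $\T^2\times(\R\times[\ep,2+\ep])$. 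So the real obstacle is this upper bound, not the lower-bound limit you flag.

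\textbf{Irrational case.} Your first route is not carried out, and it is not automatic. For $v$ of golden slope one has $|\langle\beta,v\rangle\langle\beta,v^\perp\rangle|\ge 1/\sqrt5$ for all $\beta\in\Z^2\setminus\{0\}$. This forces $\rhoT\ge\frac{1}{2\sqrt5}>\frac18$ for every rectangle aligned with $v$, so condition (ii) cannot be invoked for such shapes.

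The fallback does work, but only with a choice you should make explicit. Take $A\in GL(2,\Z)$ whose rows $c_1,c_2$ are the vectors of two consecutive continued-fraction convergents of the slope of $v^\perp$. Then $\delta_i=\langle c_i,v\rangle\to0$ for both $i$. After the cotangent lift the fiber becomes $\{|\delta_1p_1+\delta_2p_2|<r\}$, which contains $B_1(m)$ for every $m<r/\max_i|\delta_i|$. Proposition~\ref{prop-ell-norm} then gives $c(Y^4(r,v))\ge 2m\to\infty$, without needing Theorem~\ref{thm-normalized} at all. With a single approximant and an arbitrary completion to $GL(2,\Z)$, the second component need not be small, and the inscribed axis-parallel rectangles then have a bounded short side.

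With that correction, this is a more elementary argument than the paper's. The paper instead uses Theorem A of \cite{FZ25} to embed $\T^2\times\{x+y\le a\}$ into $Y^4(1,v)$, and then Corollary~\ref{cor-capacity-BM}.
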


\begin{remark} As a comparison, Theorem 1.18 in \cite{GX20} computes another capacity (called the BPS capacity, invented in \cite{BPS03}) of $Y^4(r, v)$ which depends on scalar $r$, direction $v$, and also a homotopy class in $[S^1, \T^n]$. \end{remark} 

 Recall that a symplectic capacity $c$ is called \emph{cube-normalized }if the usual ball-normalization condition (iii) above is replaced by   normalization with respect to cube  $P^{2n}(r)$
 and $N^{2n}(r)$, as in Definition 4 in \cite{GMV22}:
 \begin{equation}\label{eq-cube}
     P^{2n}(r)\coloneqq \mu^{-1}(\Omega_{P^{2n}(r)}), \text{ where }    \Omega_{P^{2n}(r)}=\left\{x\in \R^n_{\geq0}\mid \forall i=1,\ldots,n , \, x_i\leq  \pi r^2\right\}
 \end{equation}
 where $\mu$ is the moment map in (\ref{eq-toric-domain}) and 
 \[N^{2n}(r)\coloneqq  \mu^{-1}(\Omega_{N^{2n}(r)}), \text{ where }    \Omega_{N^{2n}(r)}=\{x\in \R^n_{\geq0}\mid  \exists   i=1,\ldots,n , \, x_i\leq   \pi r^2\}.\]
 
The following result provides another family of domains that one can confirm the agreement of ball-normalized capacities and cube-normalized capacities. It follows immediately from Theorem 1.2  in \cite{DR232} and Theorem 2 in \cite{GMV22}. 

\begin{thm}\label{thm-ball-normalized}
 If $\Omega \subset \R^n_{\geq 0}$ is the moment image of a  monotone toric domain $X_\Omega\subset (\R^{2n},\omega_{\rm std})$, then all ball-normalized capacities coincide on  $(\T^n\times \Omega,\omega_{\rm can})$. Also, all cube-normalized capacities coincide on  $(\T^n\times \Omega,\omega_{\rm can})$. \end{thm}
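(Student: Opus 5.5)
The plan is a sandwich argument: squeeze every ball-normalized (resp. cube-normalized) capacity of $\T^n\times\Omega$ between two quantities that are already known to coincide for the monotone toric domain $X_\Omega$. The bridge between the two settings is Proposition \ref{prop-toric}. It identifies $(\T^n\times \mathrm{int}(\Omega),\omega_{\rm can})$ symplectically with $X_{\mathrm{int}(\Omega)}=\mu^{-1}(\mathrm{int}(\Omega))\subset X_\Omega$. Since capacities are monotone and $\T^n\times\Omega$ differs from its interior only by the boundary (handled by inner/outer approximation and continuity of capacities under $C^0$ shrinking of $\Omega$), we obtain the upper bound
\[ c(\T^n\times \Omega)\le c(X_\Omega) \]
for every capacity $c$.

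For the lower bound, I will use the known coincidence result for monotone toric domains. By Theorem 1.2 in \cite{DR232}, the Gromov width of $X_\Omega$ equals its cylindrical capacity, both equal to $\min$ of the axis intercepts of $\Omega$ (the largest $a$ with the simplex of size $a$ inside $\Omega$, resp. the $\ell^1$-type quantity). Hence every ball-normalized capacity agrees on $X_\Omega$, since any such $c$ lies between $c_{\rm Gr}$ and the cylindrical capacity $c_Z$. It then suffices to show $c_{\rm Gr}(\T^n\times\Omega)\ge c_{\rm Gr}(X_\Omega)$. Together with the upper bound $c(\T^n\times\Omega)\le c_Z(\T^n\times\Omega)\le c_Z(X_\Omega)=c_{\rm Gr}(X_\Omega)$, this gives equality for all ball-normalized $c$.

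The Gromov width lower bound for $X_{\mathrm{int}(\Omega)}$ is the key step. I would use the classical trick (Traynor \cite{Tra95}): the open ball $B(a)$ minus its coordinate hyperplanes is symplectomorphic to $\T^n\times \mathrm{int}(\Delta_a)$, where $\Delta_a$ is the simplex. Traynor further shows that $B(a-\epsilon)$ embeds symplectically into $\T^n\times \mathrm{int}(\Delta_a)$ shifted appropriately, i.e. into $\T^n\times \Delta'$ for an open simplex $\Delta'$ of slightly smaller size placed away from the axes; equivalently, full balls embed into products $\T^n\times\mathrm{int}(\Delta_a)$ up to $\epsilon$. Since $\Omega$ contains $\Delta_a$ with $a=c_{\rm Gr}(X_\Omega)$, we get $B(a-\epsilon)\hookrightarrow \T^n\times\Omega$ for all $\epsilon$. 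The main obstacle is exactly this step: checking that Traynor's embedding of a (slightly shrunk) ball into $\T^n\times$ simplex is available in the normalization used here, in all dimensions $n$. If not directly citable, I would instead use the inclusion $X_{\mathrm{int}(\Omega)}\subset X_\Omega$ only for the upper bound, and obtain the lower bound from the explicit toric picture. The symplectic ball minus its axes is $\T^n\times\mathrm{int}(\Delta_a)$, and a ball of capacity $a-\epsilon$ is obtained by the standard Lagrangian-torus-fibration argument.

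For cube-normalized capacities the same scheme applies with Theorem 2 of \cite{GMV22}. For monotone $X_\Omega$, the cube capacity $c_P$ (largest cube $P(r)$ embedding) and $c_N$ coincide. The upper bound is again $c_N(\T^n\times\Omega)\le c_N(X_\Omega)$ by inclusion. For the lower bound, I note that the cube $P^{2n}(r)$ has moment image a cube $\Omega_{P}$ with corner at the origin, so $\T^n\times \mathrm{int}(\Omega_P)$ embeds into $\T^n\times\Omega$ whenever $\Omega_P\subset\Omega$. Moreover, the cube $P(r)$ is itself a product of discs, and $D^2(r)\setminus\{0\}\cong \T^1\times(0,\pi r^2)$ together with an area-preserving embedding of a full disc of area $\pi r^2-\epsilon$ into an annulus of area $\pi r^2$ gives $P(r-\epsilon)\hookrightarrow \T^n\times\mathrm{int}(\Omega_P)$. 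This last step is easy in the cube case because it is a product of 2-dimensional statements. It makes the cube case straightforward, and suggests the ball case should likewise be reduced to the 2D disc-in-annulus fact composed with Traynor's simplex trick.
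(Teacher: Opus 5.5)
Your sandwich argument works, but it is organized differently from the paper's. The paper first proves the general transfer statement Proposition~\ref{prop-toric}, $(1-\epsilon)X_\Omega\hookrightarrow \T^n\times\mathrm{int}(\Omega)\hookrightarrow X_\Omega$ for every monotone $\Omega$. This is the slit-disc map applied coordinatewise to all of $X_\Omega$, with monotonicity keeping the moment image inside $\mathrm{int}(\Omega)$. From it, Corollary~\ref{cor-capacity-BM} gives $c(X_\Omega)=c(\T^n\times\Omega)$ for \emph{every} capacity, and the paper then simply quotes \cite{DR232} and \cite{GMV22}.

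You transfer only the extremal capacities. The upper bounds for $c_Z$ and $c_N$ come from the inclusion $\T^n\times\mathrm{int}(\Omega)\cong X_{\mathrm{int}(\Omega)}\subset X_\Omega$. The lower bounds for $c_{\rm Gr}$ and $c_P$ come from embedding only the inscribed toric ball or cube. For the ball this is the case $\Omega=\Delta_a$ of Proposition~\ref{prop-toric}, which is essentially Traynor's construction in \cite{Tra95}, so your ``main obstacle'' is not one. For the cube it is a product of planar disc-into-punctured-disc maps, and no radius control is needed. The paper's route yields more (all capacities, and the $d_{\rm BM}$ identity used for Theorem~\ref{thm-large-scale-one}) and never needs to know the common value. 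Yours uses less embedding machinery but does need that value, namely $\Delta_a\subset\Omega$ for $a=c_{\rm Gr}(X_\Omega)$.

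That is where your write-up needs fixing. For monotone $X_\Omega$ the common value is $\max\{a\mid \Delta_a\subset\Omega\}$, not the minimal axis intercept. For the concave domain $\Omega=[0,1]\times[0,3]\cup[0,3]\times[0,1]$ both intercepts are $3$, but the value is $2$. The inequality $c_{\rm Gr}(X_\Omega)\ge\max\{a\mid\Delta_a\subset\Omega\}$ is just the inclusion $X_{\Delta_a}\subset X_\Omega$. The reverse inequality does not follow from the bare statement that all ball-normalized capacities coincide. You must either supply an upper bound on $c_Z(X_\Omega)$ (or on some other ball-normalized capacity), or bypass the issue by applying Proposition~\ref{prop-toric} to $\Omega$ itself. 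For cubes the corresponding identification is immediate: if $s$ is maximal with $(s,\dots,s)\in\Omega$, monotonicity gives $\Omega\subset\Omega_{N^{2n}(r)}$ with $\pi r^2=s$.

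Also, replacing $\Omega$ by $\mathrm{int}(\Omega)$ in the upper bound is not a matter of ``continuity under shrinking'', since rescaling does not move the faces of $\Omega$ lying on the coordinate hyperplanes. Instead use, as the paper does, $\Omega\subset(1+\epsilon)\mathrm{int}(\Omega)+v$, together with the fact that the fiber translation $(q,p)\mapsto(q,p+v)$ is a (non-exact) symplectomorphism of $T^*\T^n$.
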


Note that, as a special case of  the Theorem \ref{thm-ball-normalized}, one can take 
 $X_\Omega$ as the cube $P^{2n}(\sqrt{2/\pi})$  defined in (\ref{eq-cube}). Then the associated product domain 
 \[ \T^n\times \Omega_{P^{2n}\left(\sqrt{2/\pi}\right)}\simeq \T^n\times B_\infty(1) \subset T^*\T^n\]
  can be identified with $D_F^*\T^n$ where Finsler metric $F$ is the $\ell^1$-norm defined in (\ref{eq-l-p-norm}). Then Theorem \ref{thm-ball-normalized}  reproves   Theorem 1.4 in \cite{Jia93}  and also generalize it to cube-normalized capacities.  

\section{Proofs} \label{sec-proofs} 
\subsection{Preparations}
Given a product domain $\T^2\times A\subset (T^*\T^2,\omega_{\rm can} = d\lambda_{\rm can})$ where $A \subset \R^2$ is starshaped (as an element in $\mathcal{T}_{T^*\T^2}$ defined in (\ref{equ-set-prod-domain})), first let us investigate the Reeb dynamics on its boundary $(\T^2\times \partial A,  \lambda_{\rm can}|_{\T^2\times \partial A})$, following closely the approach of Section 2.1 in \cite{DGZ24}. By definition, the Reeb vector field simply denoted by $R$ is uniquely determined by the following equations:
\begin{equation}\label{eq-Reeb-eqs}
  \lambda_{\rm can}(R)=1,\quad d\lambda_{\rm can}(R,\cdot)=0.  
\end{equation} 
In coordinates $(q,p)=(q_1,q_2,p_1,p_2) \in T^*\T^2\simeq \T^2\times \R^2$ where $q = (q_1, q_2) \in \T^2=\R^2/\Z^2$ and $p = (p_1, p_2) \in \mathbb{R}^2$, we can express $\lambda_{\rm can} = \sum_{i=1}^2 p_i dq_i$  and $R(q,p):=(V_1(q,p),V_2(q,p),W_1(q,p),W_2(q,p)) \in T_{(q,p)}(T^*\T^2)\simeq T_{(q,p)}(\T^2\times \R^2)\simeq  \R_q^2\oplus  \R_p^2$.  Then the relations in (\ref{eq-Reeb-eqs}) translate to 
\begin{equation}\label{eq-Reeb-eqs-2}
\left\{
\begin{aligned}
& p_1 V_1(q,p)+p_2 V_2(q,p)=1,\\
& -V_1(q,p)\,dp_1 - V_2(q,p)\,dp_2
  + W_1(q,p)\,dq_1 + W_2(q,p)\,dq_2 = 0 .
\end{aligned}
\right.
\end{equation}
Note that along the boundary $\T^2\times \partial A$, the tangent space $T_{(q,p)}(\T^2\times \partial A)\subset T_{(q,p)}(\T^2\times \R^2)$ is spanned by the basis $\{ \partial_{q_1},\partial_{q_2}, v(p)\}$ where $\{\partial_{q_1},\partial_{q_2}\}$ forms the standard  basis of $T_q\T^2\simeq \R_q^2$ and $v(p)=(v_1(p),v_2(p))$ is the unit tangent vector  to $p\in \partial A \subset \R^2$, oriented clockwise along $\partial A$. Evaluate the second equation in (\ref{eq-Reeb-eqs-2}) on these basis elements $\partial_{q_1},\partial_{q_2},v(p)$, we obtain
\[W_1(q,p)=0, \quad W_2(q,p)=0,\quad V_1(q,p)v_1(p)+V_2(q,p)v_2(p)=0. \]
In other words, the dot product of $(V_1(q,p),V_2(q,p))$ and $v(p)$ is zero, therefore $(V_1(q,p),V_2(q,p))$ is colinear to the outer {\it unit} normal vector $n(p):=(n_1(p),n_2(p))$ at $p\in \partial A\subset \R^2$. Together with the first equation in  (\ref{eq-Reeb-eqs-2}), we conclude that
\begin{equation}\label{eq-Reeb-eqs-3}
    V_i(q,p)=  \frac{n_i(p)}{p_1 n_1(p)+p_2 n_2(p)},   \quad  W_i(q,p)= 0 , \quad i=1,2.
\end{equation} 
 For simplicity, denote $V(q,p)\in \R_q^2$ by $V (p)$ as it is $q$-independent, and then \begin{equation}\label{eq-Reeb}
   R(q,p) = \big(V(p),\,0\big),
\qquad
\phi^t_R(q,p) = \big(q + t\,V(p),\,p\big) 
\end{equation}
for any $(q,p)\in T^*\T^2 $ and $t\in \R.$ Moreover, since $A$ is assumed to be starshaped, $p_1 n_1(p)+p_2 n_2(p)>0$ for each $p \in \partial A$. 

Suppose $\gamma$ is a closed Reeb orbit of period  $T>0$ where $\gamma(0)=(q,p)\in \T^2\times \partial A$.  The formula of the Reeb flow in (\ref{eq-Reeb}) implies that $\gamma(t) = (q+ tV(p ),p ) \in \T^2\times \partial A$ for any $t\in[0,T]$. Hence, $\gamma$ is contained in the ``level set'' $\T^2\times \{p \}$.  Moreover, $\gamma$ being closed is equivalent to $q= q+T V(p) \in \T^2\simeq \R^2/\mathbb{Z}^2$, in other words, $T V(p) \in \Z^2.$ Then, by (\ref{eq-Reeb-eqs-3}), this is equivalent to the following condition, 
\begin{equation}\label{eq-cond-action}
    T V(p)=\left(\frac{Tn_1(p)}{p_1 n_1(p)+p_2 n_2(p)},\frac{Tn_2(p)}{p_1 n_1(p)+p_2 n_2(p)}\right)\in \Z^2.
\end{equation} 
Assume furthermore that $\gamma$ is simple in the sense that it is not a multiple cover of another closed Reeb orbit, then (\ref{eq-cond-action}) implies that $\frac{T}{p_1 n_1(p)+p_2 n_2(p)}>0$ is the smallest (positive) rescaling denoted by $c_p$ such that $c_p(n_1(p), n_2(p)) \in \Z^2.$
Note that this $c_p$ is uniquely determined by $(n_1(p), n_2(p))$. In this case, we have 
\begin{equation}\label{eq-action}
T = c_p(p_1 n_1(p)+p_2 n_2(p))
  \end{equation}
which by definition equals to the action of $\gamma$ under the contact 1-form $\lambda_{\rm can}|_{\T^2 \times \partial A}$. 

\medskip

 Next, given  $X$ a general fiberwise star-shaped domain in  $(T^*\T^2,\omega_{\rm can}=d\lambda_{\rm can})$, we  will study the Ruelle invariant $\mathrm{Ru}_{T^*\T^2}(\partial X)$ of $(\partial X,\lambda_{\rm can}|_{\partial X})$ with respect to a fixed trivialization $\tau$ of the contact structure $\xi=\ker \lambda_{\rm can}$. Such a quantity was first introduced into symplectic geometry by Hutchings in \cite{Hut22}. 
For brevity, denote  $Y :=\partial X \subset T^*\T^2 $. Let us elaborate our trivialization $\tau$ and recall the definition of Ruelle invariant in this case. 

Viewing $Y$ as an $S^1$-bundle over $\T^2$, consider coordinates $(q, p) = (q_1,q_2,p_1,p_2)\in Y$ globally defined. Then the tangent space at point $y = (q,p) \in Y$ splits as  $T_{(q,p)}Y=T_q\T^2\oplus T_p(Y_q)$ where $Y_q$ denotes the fiber of $Y$ over $q$. For any vector $v=a_1\partial_{q_1}+a_2\partial_{q_2} + b\partial_p \in \xi$, by the defining property $\lambda_{\rm can}(v)=0$, we have 
\[0=a_1p_1+a_2p_2 \,\,\,\,\mbox{and}\,\,\,\, \mbox{no constraint on $b$}.\]
So, (by taking $a_1 = p_2$ and $a_2 = -p_1$) we can take a basis of $\xi$ by $e_1(y)=p_2\partial_{q_1}-p_1\partial_{q_2}$   and $e_2(y)\in T_p(Y_q)\subset \R_p^2$, the unique unit vector (with respect to the Euclidean norm on  $\R_p^2$) such that the orientation condition 
\[ (\omega_{\rm can})_y (e_1(y),e_2(y))>0\]
 holds. Then $(e_1(y),e_2(y))$ forms an ordered basis of $\xi_y$, which also defines a trivialization of  the  contact 2-plane field $\xi$. Explicitly, we have a bundle isomorphism $\tau: \xi \to \T^2 \times \R^2$ by $\tau(v) = \tau(ae_1(y)+be_2(y)) := (y, (a,b))$. 
Let $d\phi^t_R|_{\xi_y} : \xi_y \to \xi_{\phi^t_R(y)}$
be the linearized Reeb flow restricted to $\xi_y$.  For any  $y\in Y$ and $T\geq 0$, via $\tau$ (in particular, under the basis $(e_1(y),e_2(y))$), the linearized flow $\{d\phi^t_R|_{\xi_y} \}_{t\in [0,T]}$ can be realized as a path of matrices
\[ \Psi_t(y) \coloneqq \tau_{\phi^t_R(y)}\circ  d\phi^t_R|_{\xi_y}\circ \tau^{-1}_{ y }\]
  in $ \mathrm{Sp} (2)$ starting from the identity matrix. 
Together with the rotation number function $\rho \colon \widetilde{\mathrm{Sp}}(2) \to \R$ (cf.~Theorem 1 in \cite{BS10}), 
this yields a real number $ \rho\!\left(\{\Psi_t(y)\}_{t\in[0,T]}\right)$. By \cite{Rue85} (see also Section 3.2 in \cite{GE97}),
 it  determines  a well-defined  integrable function $\mathrm{rot}_\tau\colon Y \to \R$ as follows, 
\begin{equation}\label{eq-rot-rate}
   \mathrm{rot}_\tau(y)  \coloneqq \lim_{T\to+\infty} \frac{\rho\!\left(\{\Psi_t(y)\}_{t\in[0,T]}\right)}{T}.
\end{equation} 
Then for the  fixed trivialization $\tau$ as above, the \emph{Ruelle invariant} of $(Y=\partial X,\lambda_{\rm can}|_{Y})$ is defined by
\begin{equation}\label{eq-dfn-Ruelle}
    \mathrm{Ru}_{T^*\T^2}(Y) \coloneqq \int_{Y } \mathrm{rot}_\tau(y)\,\lambda_{\rm can}\wedge (d\lambda_{\rm can})
\end{equation} 
where $d\lambda_{\rm can}$ is understood by restriction on $Y$. 

\medskip

Now, based on the computations and definition above, for product domains $\mathcal{T}_{T^*\T^2}$ defined in (\ref{equ-set-prod-domain}), we have the following proposition:  
  \begin{prop}\label{prop-convex}
  For any product domain $\T^2\times A \subset (T^*\T^2,\omega_{\rm can}=d\lambda_{\rm can})$ where $A\subset\R^2$ is star-shaped,  $\lambda_{\rm can}|_{\T^2\times \partial A}$ is dynamically convex and $\mathrm{Ru}_{T^*\T^2}(\T^2\times \partial A)=0$.
\end{prop}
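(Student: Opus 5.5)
Both claims follow from the explicit Reeb flow (\ref{eq-Reeb}).

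\emph{Dynamical convexity.} I will show that $(\T^2\times\partial A,\lambda_{\rm can})$ has no contractible closed Reeb orbits, so alternative (i) of the definition holds. By (\ref{eq-Reeb}), a closed orbit $\gamma$ of period $T$ through $(q,p)$ is $t\mapsto (q+tV(p),p)$ with $TV(p)\in\Z^2$. Since $A$ is star-shaped, $p_1n_1+p_2n_2>0$, so $V(p)\neq 0$ and $TV(p)\in\Z^2\setminus\{0\}$. The class of $\gamma$ in $\pi_1(T^*\T^2)\cong\pi_1(\T^2)=\Z^2$ is obtained by projecting to $\T^2$ and lifting to $\R^2$. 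The lift runs from $q$ to $q+TV(p)$, so $[\gamma]=TV(p)\neq 0$. Hence $\gamma$ is not contractible in $T^*\T^2$, and therefore not contractible in $\T^2\times\partial A$ either, since the inclusion induces the map on $\pi_1$ through which we computed.

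\emph{Ruelle invariant.} I will show that $\mathrm{rot}_\tau\equiv 0$ for the trivialization $\tau$ fixed above; the integral (\ref{eq-dfn-Ruelle}) then vanishes. First compute $d\phi^t_R$. Since $\phi^t_R(q,p)=(q+tV(p),p)$, we have
\[
d\phi^t_R(\partial_{q_i})=\partial_{q_i},\qquad d\phi^t_R(w)=w+t\,dV_p(w)\cdot\partial_q \quad\text{for } w\in T_p\partial A .
\]
The base point's $p$ is unchanged along the flow, so $e_1=p_2\partial_{q_1}-p_1\partial_{q_2}$ is preserved. For $e_2$, the image is $e_2+t\,dV_p(e_2)$ with $dV_p(e_2)\in\R^2_q$. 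This vector lies in $\xi$ because $\phi^t_R$ preserves $\lambda_{\rm can}$. Its $q$-part is annihilated by $\lambda_{\rm can}=p\,dq$, so it is a multiple $s\,e_1$. Indeed $d\lambda(e_2\text{-direction})$ kills the $p$-component, and $\xi\cap\R^2_q=\mathrm{span}(e_1)$. Thus $dV_p(e_2)=s(p)e_1$. In the basis $(e_1,e_2)$ this gives
\[
\Psi_t=\begin{pmatrix}1& t\,s(p)\\ 0&1\end{pmatrix},
\]
a path of unipotent (shear) matrices starting at the identity.

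The rotation number of such a path is bounded: it never crosses into a full rotation, and the angle of any vector moves by less than $\pi$. Hence $|\rho(\{\Psi_t\}_{t\in[0,T]})|$ is bounded uniformly in $T$ (by $1/2$, or $0$ by the normalization of \cite{BS10} on parabolic paths). Dividing by $T$ and letting $T\to\infty$ in (\ref{eq-rot-rate}) gives $\mathrm{rot}_\tau(y)=0$ for every $y$, and therefore $\mathrm{Ru}_{T^*\T^2}(\T^2\times\partial A)=0$.

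The main point needing care is the verification that $\tau$ really transports the linearized flow to this upper-triangular shear. This requires checking that $e_2(\phi^t_R(y))=e_2(y)$, which holds because $e_2$ depends only on $p$, and that $dV_p(e_2)$ is parallel to $e_1$. The latter also follows directly: differentiating $p\cdot V(p)=1$ along $\partial A$ gives $w\cdot V+p\cdot dV_p(w)=0$ with $w\cdot V=0$ (as $V\parallel n\perp w$), so $p\cdot dV_p(w)=0$. The rotation-number bound for unipotent paths is then standard.
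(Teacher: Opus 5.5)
Your proposal is correct and follows the paper's argument. Dynamical convexity comes from showing that every closed Reeb orbit projects to a non-contractible loop in $\T^2$, which gives alternative (i). The Ruelle invariant vanishes because, in the global trivialization $(e_1,e_2)$, the linearized Reeb flow is a path of shear matrices, so $\mathrm{rot}_\tau\equiv 0$. Your differences are minor refinements: you get $dV_p(e_2)\parallel e_1$ by differentiating $p\cdot V(p)=1$ instead of the paper's explicit difference-quotient computation, and you use only that the rotation number of a shear path stays bounded in $T$, which is all the limit in (\ref{eq-rot-rate}) needs, rather than asserting it is exactly zero.
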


\begin{proof}[Proof of Proposition \ref{prop-convex}]
First we prove that for any star-shaped domain $A\subset \R^2$, $ \lambda_{\rm can}|_{\T^2\times \partial A}$ is always dynamically convex.
   
Let $\gamma$ be a closed  Reeb orbit of action $T>0$, starting at $(q ,p  )\in \T^2\times \partial A $.  
  Consider the natural projection map $\pi\colon \T^2\times \partial A\to \T^2$ to the first component. Then by (\ref{eq-Reeb}) and (\ref{eq-cond-action}) the closed loop
  \[\pi\circ \gamma=\left\{(\pi\circ \gamma)(t)=q+t V(p)\right\}_{t\in [0,T]}\subset \T^2\]
is non-constant, which implies that $\pi\circ \gamma$ is non-contractible in $\T^2$. Since
$\pi$ induces a natural projection $\pi_*\colon \pi_1(\T^2) \times \pi_1(\partial A) (= \pi_1(\T^2\times \partial A))\to \pi_1(\T^2)$, which is an isomorphism when restricted on $\pi_1(\T^2)$, we know that $[\pi \circ \gamma] = \pi_*([\gamma]) \neq 0$ implies $[\gamma] \neq 0$. In particular, $\gamma$ is non-contractible in $\T^2\times \partial A$.
In this way, we have shown that any non-constant closed Reeb orbit of $(\T^2\times \partial A, \lambda_{\rm can}|_{\T^2\times \partial A})$ is non-contractible. Then by definition of dynamical convexity at the beginning of Section \ref{sec-back-conx}, $ \lambda_{\rm can}|_{\T^2\times \partial A}$ is dynamically convex.

\medskip

Next we prove the Ruelle invariant of $(\T^2\times  \partial A,\lambda_{\rm can}|_{\T^2\times  \partial A})$ vanishes. In this case,  for any  point $y=(q,p)\in \T^2\times \partial A$, by the discussion right above this proposition, consider the following basis 
\begin{equation}\label{eq-basis}
   e_1(y)= p_2\partial_{q_1}  -p_1\partial_{q_2}\quad \text{and} \quad e_2(y)= -n_2(p)\partial_{p_1}+n_1(p)\partial_{p_2} 
\end{equation} 
  where $n(p)=(n_1(p),n_2(p))$ is the outer unit normal vector at $p\in \partial A$ and $\{\partial_{p_1},\partial_{p_2}\}$ is the standard  basis of $T_p\R^2\simeq \R_p^2$. For any $v\in T_{(q,p)}(T^*\T^2)$, under the standard  basis  we can write $v=v_{q_1}\partial_{q_1}+v_{q_2}\partial_{q_2}+v_{p_1}\partial_{p_1}+v_{p_2}\partial_{p_2}$. 
To compute the differential $(d\phi^t_R)_y(v)$, we locally take a smooth path $r(s)\colon (-\ep,\ep)\to \T^2\times \partial A$ for $\ep>0$ sufficiently small  such that $r(0)=y=(q,p)\in \T^2\times \partial A$ and $r'(0)=v$. For brevity we denote 
\[ r(s)= (q(s),p(s))=(q_1(s),q_2(s),p_1(s),p_2(s))\in \T^2\times \partial A\subset T^*\T^2.\]
   For any $s\in (-\ep,\ep)$ we have
   \begin{equation*}
       \begin{split}
           r(s)&=r(0)+sv+o(s)\\&=(q_1+s v_{q_1},q_2+sv_{q_2},p_1+sv_{p_1},p_2+sv_{p_2})+o(s)
       \end{split}
   \end{equation*}
where the approximation term $o(s) $ exists to guarantee that $r(s)\in \T^2\times \partial A.$
Then by (\ref{eq-Reeb-eqs-3}) and (\ref{eq-Reeb}) we have
\[\phi^t_R(r(s))=(q_1(s)+t\cdot\theta_1(s),q_2(s)+t\cdot\theta_2(s),p_1(s),p_2(s)),\]
where 
\[\theta_1(s)=\frac{ n_1(p(s))}{p_1(s)n_1(p(s))+p_2(s)n_2(p(s))}, \quad\theta_2(s)=\frac{ n_2(p(s))}{p_1(s)n_1(p(s))+p_2(s)n_2(p(s))}.\]
Note that the denominator of  $\theta_1(s)$ and  $\theta_2(s)$ can be simplified as
\[p_1(s)n_1(p(s))+p_2(s)n_2(p(s))=p_1n_1(p(s))+p_2n_2(p(s))+o(s)\]
 and converges to $p_1n_1(p)+p_2n_2(p)$ as $s\to0.$ Then by definition and the computations above we calculate the differential as 
\begin{equation}\label{eq-differential}
    \begin{split}
        (d\phi^t_R)(v)&=\lim_{s\to 0}\frac{\phi^t_R(r(s))-\phi^t_R(r(0))}{s}\\&= (  v_{q_1},  v_{q_2}, v_{p_1}, v_{p_2})\\&+\left( t\cdot\lim_{s\to 0}\frac{\theta_1(s)-\frac{n_1(p)}{p_1n_1(p)+p_2n_2(p)}}{s},t\cdot\lim_{s\to 0}\frac{\theta_2(s)-\frac{n_2(p)}{p_1n_1(p)+p_2n_2(p)}}{s} ,0,0\right).
    \end{split}
\end{equation}
By simplifying terms in the above equation, we have
\begin{equation}\label{eq-diff-1}
    \lim_{s\to 0}\frac{\theta_1(s)-\frac{n_1(p)}{p_1n_1(p)+p_2n_2(p)}}{s}=\frac{(n_1(p)n_2(p(s))'|_{s=0}-n_1(p(s))'|_{s=0}n_2(p))(-p_2)}{(p_1n_1(p)+p_2n_2(p))^2}
\end{equation}
where $(\cdot)'$ denotes the derivative with respect to variable $s$. Similarly we have
\begin{equation}\label{eq-diff-2}
    \lim_{s\to 0}\frac{\theta_2(s)-\frac{n_2(p)}{p_1n_1(p)+p_2n_2(p)}}{s}=\frac{(n_1(p)n_2(p(s))'|_{s=0}-n_1(p(s))'|_{s=0}n_2(p))( p_1)}{(p_1n_1(p)+p_2n_2(p))^2}.
\end{equation}
We denote the term 
\begin{equation}\label{eq-function-f}
    f_v(p)=\frac{ n_1(p)n_2(p(s))'|_{s=0}-n_1(p(s))'|_{s=0}n_2(p)  }{(p_1n_1(p)+p_2n_2(p))^2}
\end{equation}
  appearing in both  of the equations (\ref{eq-diff-1}) and (\ref{eq-diff-2}) above and it depends on the choice of  $v$. Then for any $v\in \xi_y$, under the basis $(e_1(y),e_2(y))$ in (\ref{eq-basis}), $v$ can be expressed as
  \[ v=v_qe_1(y)+v_p e_2(y)=v_qp_2\partial_{q_1}-v_qp_1\partial_{q_2}-v_pn_2(p)\partial_{p_1}+v_pn_1(p)\partial_{p_2}.\]
  Then based on the computation in (\ref{eq-differential}), if $v=e_1(y)$, $(d\phi^t_R)|_{\xi_y}(v)=e_1(y)$. Meanwhile,   if $v=e_2(y)$, 
  \[(d\phi^t_R)|_{\xi_y}(v)=e_2(y)-tf_{e_2(y)}(p)e_1(y).\]
    Representing this by a matrix
with respect to the oriented basis $(e_1(y), e_2(y))$,  we have
\begin{equation}\label{eq-trivialization}
   \left.d \phi_{t}\right|_{\xi_{y}}=\left(\begin{array}{cc}
1 & -tf_{e_2(y)}(p)   \\
0 & 1
\end{array}\right)\in \mathrm{Sp}(2). 
\end{equation} 
Therefore for any  $y\in \T^2\times \partial A$ and $T\geq 0$, the linearized differential $\{\left.d \phi_{t}\right|_{\xi_{y}}\}_{t\in [0,T]}$ is realized as a continuous path of 
shear matrices.  
In particular, none of these matrices contributes for rotations,  hence the rotation number
 $ \rho\!\left(\{\Psi_t(y)\}_{t\in[0,T]}\right)=0$  for all  $y\in \T^2\times \partial A$ and $T>0.$
Therefore, by (\ref{eq-rot-rate}), $\mathrm{rot}_\tau(y) \equiv 0$
 for all  $y\in \T^2\times \partial A.$
Finally, by definition of the Ruelle invariant in (\ref{eq-dfn-Ruelle}),
\[
\mathrm{Ru}_{T^*\T^2}(\T^2\times \partial A)
= \int_{\T^2\times \partial A} \mathrm{rot}_\tau(y)\,\lambda_{\rm can}\wedge (d\lambda_{\rm can})
= \int_{\T^2\times \partial A} 0\cdot\lambda_{\rm can}\wedge (d\lambda_{\rm can}) = 0.
\]
This proves the second part of Proposition \ref{prop-convex}. 
\end{proof}
\begin{remark} \label{rmk-ru-smooth}
   By Proposition 2.2 in \cite{DGZ24}  (see also  Proposition 1.11 of \cite{Hut22}) we have, for any toric domain $X_\Omega\subset (\R^4,\omega_{\rm std})$,  its
Ruelle invariant is given by
\begin{equation}\label{eq-ru-ab}
    \mathrm{Ru}(X_\Omega)=a+b.
\end{equation}
Here, $a$ and  $b$ are intercepts of the moment image $\Omega=\mu(X_\Omega)\subset \R^2_{\geq 0}$ with coordinate axes, where $\mu$ is defined in (\ref{eq-toric-domain}). The  difference between (\ref{eq-ru-ab}) and the conclusion in Proposition \ref{prop-convex} comes from the fact that the trivialization $\tau$ in the discussion above Proposition \ref{prop-convex} is globally defined  on $\T^2\times \partial A$, while for toric domain $X_\Omega\subset (\R^4,\omega_{\rm std})$ extra counting of rotation along Reeb orbits on the $z_1$-plane and the $z_2$-plane is needed (see Section 2.2 of \cite{DGZ24}) and contributes $a$ and $b$ respectively. 
\end{remark}
 Note that all the computations on product domain $\T^2\times A$ above is based on the assumption that $\partial A$ is of at least $C^1$-regularity (so that the normal vector is always well-defined on $\partial A$). In many situations of interest, however, the domain $A$ does not have a smooth boundary. Then we consider a sequence of smooth approximations $\{(\T^2\times \partial A_\ep,\lambda_{\rm can}|_{\T^2\times \partial A_\ep})\}_{\ep>0}$ and define the systolic ratio   $\rhoT(\T^2\times A)$ to be the limit  $\lim_{\ep\to0}\rhoT(\T^2\times A_\ep)$. In the following, we provide an example of such approximation and apply it to compute the systolic ratio.
    \begin{figure}[h]
        \centering
\includegraphics[width=0.75\linewidth]{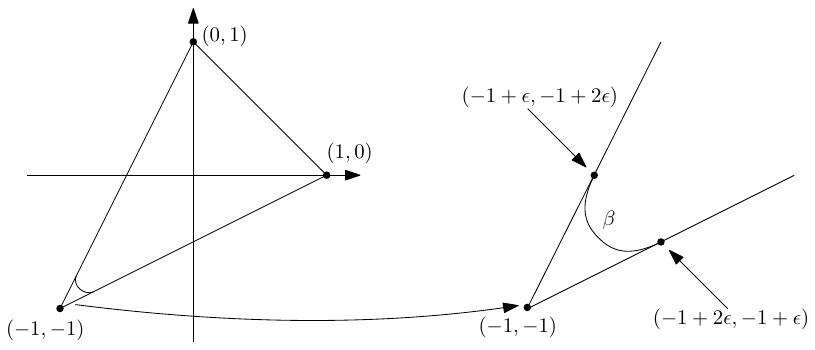}
        \caption{Smoothing triangle $A$ at vertex $(-1, -1)$.}
 \label{fig:example_(5)}
    \end{figure}
\begin{ex}\label{ex-perturb-cal} Consider triangle $A\subset \R^2$ with vertices $(-1,-1),(1,0),(0,1)$ as illustrated in the left part of Figure \ref{fig:example_(5)}. Denote by $\gamma$ a simple closed  Reeb orbit with $\gamma(0)=(q,p)\in \T^2\times \partial A$.   First consider the case where  $p\in \partial A$ which is not a vertex. There are three possibilities:
\begin{itemize}
\item[(i)] Point $p \in \partial A$ locates on the line $y=x-1$, where $n(p)=\left(\frac{\sqrt{2}}{2},\frac{\sqrt{2}}{2}\right)$.  Then $c_p=\sqrt{2}$ is the smallest positive rescaling   such that $c_pn(p)\in \Z^2$. By (\ref{eq-action}), the corresponding action 
\begin{align*}
T=c_p (p_1n_1(p)+p_2n_2(p))= \sqrt{2} \cdot (p_1, p_2) \cdot \left(\frac{\sqrt{2}}{2},\frac{\sqrt{2}}{2} \right) = 1. 
\end{align*}
Note that this holds for any point $p$ on this side.
\item[(ii)] Point $p \in \partial A$ locates on the line $y=\frac{1}{2}(x-1)$, where $n(p)=\left(\frac{\sqrt{5}}{5},-\frac{2\sqrt{5}}{5}\right)$. Then $c_p=\sqrt{5}$ is the smallest positive rescaling such that $c_pn(p)\in \Z^2$. By (\ref{eq-action}), the corresponding action 
\[ T=c_p (p_1n_1(p)+p_2n_2(p))= \sqrt{5} \cdot (p_1, p_2) \cdot \left(\frac{\sqrt{5}}{5},-\frac{2\sqrt{5}}{5}\right) = 1. \]
Note that this holds for any point $p$ on this side.
\item[(iii)] Point $p \in \partial A$ locates on the line $y=2x+1$, where $n(p)=\left(-\frac{2\sqrt{5}}{5}, \frac{\sqrt{5}}{5}\right)$. Then $c_p=\sqrt{5}$ is the smallest positive rescaling such that $c_pn(p)\in \Z^2$. By (\ref{eq-action}), the corresponding action 
\[ T=c_p (p_1n_1(p)+p_2n_2(p))= \sqrt{5} \cdot (p_1, p_2) \cdot \left(-\frac{2\sqrt{5}}{5}, \frac{\sqrt{5}}{5}\right) = 1. \]
 Note that this holds for any point $p$ on this side.
\end{itemize}
Consequently for any  simple closed Reeb orbit $\gamma$, starting from $(q,p)$ with $p\in \partial A$ not a vertex admits action $T=1$. 

 On each vertex  where the normal direction is not well-defined, we will always adopt the following smooth approximation (see the right picture in Figure \ref{fig:example_(5)}). For a sufficiently small $\ep>0$, consider a smooth embedded curve $\beta(t)\colon [0,1]\to \R^2$ (depending on $\ep$) satisfying the following properties:
\begin{itemize}
\item[(i)]  the image $\{\beta(t)\}_{t\in [0,1]}\subset \R^2$ is symmetric with respect to the line $y=x$ where $\beta(0)=(-1+\ep,-1+2\ep)$, $\beta(1/2) =(-1+\ep,-1+\ep)$, and $\beta(1)=(-1+2\ep,-1+\ep)$; moreover, the image of $\beta$ is contained inside the triangle $\Delta_\ep$ with vertices $(-1+\ep,-1+2\ep),(-1+2\ep,-1+\ep),(-1,-1)$;
\item[(ii)] the new boundary $(\partial A\backslash \partial\Delta_\ep)\cup \beta \subset \R^2$ is also smooth, in particular, near points $(-1+\ep,-1+2\ep)$ and $(-1+2\ep,-1+\ep)$;
\item[(iii)] the new boundary $(\partial A\backslash \partial\Delta_\ep)\cup \beta$  bounds a convex star-shaped domain in $\R^2$, denoted by $A_\ep$ (which is still symmetric to the line $y=x$); 
\item[(iv)] the planar area gap between $A$ and $A_{\ep}$ in $\R_{\geq 0}^2$ is at most $\frac{3\ep^2}{2}$. 
\end{itemize}
By symmetry, let us focus on the ``lower'' part of $\beta$, i.e., $\{\beta(t)\}_{t\in [\frac{1}{2},1]}$. The outer unit normal vector smoothly changes from $(-\frac{\sqrt{2}}{2},-\frac{\sqrt{2}}{2})$ to $( \frac{\sqrt{5}}{5},-\frac{2\sqrt{5}}{5})$. Along the way, these outer unit normal vectors $n$ fall into the following region:
\begin{equation}\label{eq-normal}
\left\{(x,y)\in \R^2 \,\middle|\, y \leq x \,\,\mbox{and}\,\, y \leq -2x \right\}.
\end{equation}

For this part of the image of $\beta$, express $\beta(t) =(-1+\delta_1,-1+\delta_2)$ where $\ep\leq \delta_1\leq 2\ep$ and $0\leq \delta_2\leq \ep$.  If a closed Reeb orbit $\gamma \subset \T^2\times \{\beta(t)\}$ for some $t \in [\frac{1}{2}, 1]$, then with respect to the smallest positive  rescaling $c$ of normal vector $n$ such that $c \cdot n=:(m_1,m_2) \in \Z^2 \backslash \{(0, 0)\}$. Moreover, by  (\ref{eq-normal}), the rescaled normal vector $(m_1, m_2)$ satisfies $m_2 \leq m_1$ and $m_2 \leq - 2m_1$.  Then, for the estimation of the action $T$ of $\gamma$, we divide it into different cases as follows. 

\begin{itemize}
\item[(i)] If $m_1 \geq 1$, then $m_2 \leq -2$ and 
\begin{align*}
T =m_1(-1+\delta_1)+m_2(-1+\delta_2) & \geq (-1 + \delta_1) + (-m_2) (1- \delta_2) \\
& \geq (-1+ \delta_1) + 2(1-\delta_2) \geq 1 + \ep - 2\ep = 1-\ep. 
\end{align*} 
\item[(ii)] If $m_1=0$, then $m_2 \leq -1$ (since $(m_1, m_2) \neq (0, 0)$) and 
\[ T = m_1(-1+\delta_1)+m_2(-1+\delta_2) = (-m_2)(1 - \delta_2)  \geq 1-\delta_2 \geq 1- \ep. \]
\item[(iii)] If $m_1 \leq -1$, then $m_2 \leq -1$ and 
\begin{align*}
T =m_1(-1+\delta_1)+m_2(-1+\delta_2) & \geq (-m_1) (-1 + \delta_1) + (-m_2) (1- \delta_2) \\
& \geq (1- \delta_1) + (1-\delta_2) \geq 2 -2 \ep - \ep = 2-3\ep. 
\end{align*} 
\end{itemize}
By assuming $\ep$ is sufficiently small (say, $\ep \leq \frac{1}{2})$, we have $2-3\ep  \geq 1-\ep$. Therefore, we conclude that the action of any closed Reeb orbit starting at point $(q,\beta(t))\in \T^2\times \partial A$ for $q \in \T^2$ and $t \in [\frac{1}{2},1]$ is always no less than  $1-\ep$. By the symmetry of $\beta$ with respect to the line $y=x$,  this action lower bound also holds for $t\in[0,\frac{1}{2}]$.  

We carry out a similar smoothing process at vertex $(0,1)$, where the resulting smooth family of normal vectors $(m_1, m_2)$ satisfy $m_2 \geq m_1$ and $m_2 \geq \frac{-m_1}{2}$. Then the same argument above implies that the action of any close Reeb orbit near $(0,1)$ is no less than $1-2\ep$. Similarly, carry out a similar smoothing process at vertex $(1,0)$, then the same conclusion for $(0,1)$ holds for $(1,0)$ by the symmetry with respect to the line $y =x$. Again, denote by $A_{\ep}$ the resulting domain after all these three smoothing processes at $(-1,-1)$, $(0,1)$ and $(1,0)$. We know that $\partial A_{\ep}$ is smooth, and  
\[1-2\ep\leq \sys(\T^2\times \partial A_\ep)\leq 1.\]
  Moreover, near each vertex, the smoothing removes at most an area in order $\ep^2$, so the volume of $\T^2\times \partial A_\ep$ satisfies:
   \[3-\mathcal O (\ep^2) \leq \mathrm{vol}(\T^2\times \partial A_\ep,\lambda_{\rm can}|_{\T^2\times \partial A_\ep})=2 \mathrm{area}(A_\ep)\leq 3\]
   where $\mathcal O(\ep^2)$ represents a number in order $\ep^2$. 
   Therefore, by definition in (\ref{eq-sys-rat-1}), the systolic ratio of $\T^2\times A_\ep$ satisfies  
\[\frac{(1-2\ep)^2}{3 }\leq \rhoT(\T^2\times  A_\ep)\leq\frac{1}{3-\mathcal O(\ep^2)}  . \]
Letting $\ep\to 0$, we have $ \rhoT(\T^2\times  A_\ep)\to \frac{1}{3}$, which is defined as the value of $ \rhoT(\T^2\times  A )$ under this smoothing and approximation process.
\end{ex}

\subsection{Proof of Theorem \ref{thm-class}}\label{sec-thm-A}
Based on the   Proposition \ref{prop-convex} above, we  now  prove Theorem \ref{thm-class} by fully distinguishing  the subclasses in Figure \ref{fig:diagram}.    For simplicity,  we introduce the following lemma that comes from the cotangent bundle lift theorem (see for example Exercise 3.1.21 (i) in \cite{MS17}.)
\begin{lemma}\label{lemma-contact}
    For any product domain $X=\T^2\times A\subset T^*\T^2$, there exists an exact symplectic embedding  
    \[\Phi \colon (X,\omega_{\rm can})\hookrightarrow (T^*\T^2,\omega_{\rm can}=d\lambda_{\rm can})\]
    such that $\Phi$ restricts to boundary $(\partial X,\lambda_{\rm can}|_{\partial X})$ a strict contactomorphism to $(\Phi(\partial X),\lambda_{\rm can}|_{\Phi(\partial X)})$. Moreover, $\Phi(X)$ is not a product domain in $T^*\T^2$\end{lemma}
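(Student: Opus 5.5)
The plan is to build $\Phi$ as a cotangent lift of a diffeomorphism of $\T^2$, corrected by a fiberwise translation by an exact $1$-form. Concretely, pick a smooth function $h\colon \T^2\to\R$ with $dh\not\equiv 0$ and set
\[
\Phi(q,p) \coloneqq (q,\, p + dh(q)).
\]
This is the time-one map of the Hamiltonian flow of $H(q,p) = -h(q)$, which is $q$-independent in the fiber sense. A direct computation in the coordinates $\lambda_{\rm can} = \sum_i p_i\,dq_i$ gives
\[
\Phi^*\lambda_{\rm can} = \sum_i \bigl(p_i + \partial_{q_i}h\bigr)\,dq_i = \lambda_{\rm can} + dh .
\]
So $\Phi$ is an exact symplectomorphism of $T^*\T^2$ in the sense of (\ref{eq-exact-H1}) with $f = h\circ\pi$; this is the cotangent-lift statement of Exercise 3.1.21 in \cite{MS17}. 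Restricting to $X$ gives the exact symplectic embedding.

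The strict contactomorphism requirement does not hold automatically, since the argument above only gives $\Phi^*\lambda_{\rm can}|_{\partial X} = \lambda_{\rm can}|_{\partial X} + dh|_{\partial X}$. To repair this, compose with the time-$h$ Reeb flow on $\partial X$. By (\ref{eq-Reeb}), the Reeb flow of $\T^2\times\partial A$ is $\phi^t_R(q,p) = (q + tV(p), p)$. The correction must be built so that the primitive difference is killed on the boundary. I will do this with a Moser-type argument in a collar: define
\[
\Psi(q,p) = \bigl(q - g(q,p)\,V(p),\, p\bigr)
\]
near $\partial X$, extended by a suitable cutoff, and then adjust the map so that the pulled-back form changes by $-dh$ along the boundary. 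A cleaner alternative is to skip this composition and simply choose $h$ with $dh|_{\partial X}$ exact-zero. That does not work, because $dh$ on $\T^2\times\partial A$ vanishes only if $h$ is constant.

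Instead I will use the standard fact that if $\Phi^*\lambda = \lambda + dh$ on a contact manifold, then $\Phi$ composed with an isotopy generated by the Reeb-direction flow for time $-h$ need not be contact. So the better route is the following: work with $\Phi_t(q,p) = (q, p + t\,dh(q))$ and regard $\Phi(\partial X)$ as carrying the induced form $\lambda_{\rm can}|_{\Phi(\partial X)}$. Then
\[
\Phi^*\bigl(\lambda_{\rm can}|_{\Phi(\partial X)}\bigr) = \lambda_{\rm can}|_{\partial X} + d(h|_{\partial X}).
\]
The form $\lambda + dh$ differs from $\lambda$ by an exact form, so the two contact forms have identical Reeb dynamics up to reparametrization by Gray/Moser, and identical actions of closed orbits. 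Genuine strict contactomorphism then follows from a Gray-stability argument: the forms $\lambda + s\,dh$, $s\in[0,1]$, are all contact because $d(\lambda+s\,dh)=d\lambda$, and they all share the Reeb field $R$ only if $dh(R)=0$. So I will choose $h$ to satisfy $dh(V(p))=0$; for example, $h$ depending only on a direction transverse to all $V(p)$. That is impossible in general, so I would instead apply Moser's trick, finding a vector field $Y_s$ with $\iota_{Y_s}d\lambda + d(\lambda(Y_s)) = -dh$ modulo the Reeb direction. Solving $\lambda(Y_s) = -h$ along $R$ gives $Y_s = -hR$, whose flow (time $1$) $\chi$ satisfies $\chi^*(\lambda+dh) = \lambda$ exactly, because $\mathcal L_{-hR}\lambda = -dh$. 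This flow stays inside $\partial X$ and extends to a collar. Then $\Phi\circ\chi$, extended to $X$ by a cutoff Hamiltonian isotopy (which is exact by Remark \ref{rmk-Ham}), gives the required embedding.

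The final step is to show that $\Phi(X)$ is not a product domain. The fiber of $\Phi(X)$ over $q$ is $A + dh(q)$, and these fibers vary with $q$ when $dh\not\equiv 0$. The bounded set $A$ is invariant under translation by a vector only if that vector is zero, so the fibers are not all equal to a common $A'$. Hence $\Phi(X)$ is not of the form $\T^2\times A'$. The main obstacle is the boundary-strictness correction in the middle paragraph, specifically checking that the time-one flow of $-hR$ together with the Hamiltonian extension remains an exact embedding.
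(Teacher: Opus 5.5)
Your route has a genuine gap, and it sits exactly where the lemma has content: strictness on $\partial X$.

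The Moser step is wrong as written. The identity $\mathcal{L}_{-hR}\lambda=-dh$ only matches the derivative at time $0$. For the autonomous flow $\chi_t$ of $-hR$ on $\partial X$,
\[
\frac{d}{dt}\,\chi_t^*(\lambda+t\,dh)=\chi_t^*\bigl(\mathcal{L}_{-hR}\lambda+t\,\mathcal{L}_{-hR}\,dh+dh\bigr)=-t\,\chi_t^*\,d\bigl(h\,R(h)\bigr).
\]
Hence $\chi_1^*(\lambda+dh)=\lambda-d\bigl(\int_0^1 t\,(h\,R(h))\circ\chi_t\,dt\bigr)$, which is not $\lambda$ in general. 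As you yourself noted, $R(h)\equiv 0$ is impossible here.

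The claim that every $\lambda+s\,dh$ is contact ``because $d(\lambda+s\,dh)=d\lambda$'' is also false. On a $3$-manifold, $(\lambda+s\,dh)\wedge d\lambda=(1+s\,R(h))\,\lambda\wedge d\lambda$, which degenerates where $s\,R(h)=-1$. That is exactly where the translated fibre $A+s\,dh(q)$ stops being strictly star-shaped, so you need $h$ to be $C^1$-small.

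Finally, ``extends to a collar'' is not enough. You need an exact symplectic map of $X$ onto $X$ that restricts to $\chi$ on $\partial X$. Without it, neither exactness nor the identification of the image with $\Phi(X)$ (which your non-product argument uses) is justified.

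These points can be repaired. Take $h$ $C^1$-small and use the time-dependent field $Y_s=-h\,R_s$, where $R_s=R/(1+s\,R(h))$ is the Reeb field of $\lambda_s=\lambda+s\,dh$. Then $\iota_{Y_s}\lambda_s=-h$ and $\iota_{Y_s}d\lambda_s=0$, so $\mathcal{L}_{Y_s}\lambda_s=-dh$ for all $s$, and the time-one map satisfies $\chi^*(\lambda+dh)=\lambda$. Since $Y_s$ is tangent to the characteristic line field of $\partial X$, it equals $X_{K_s}$ along $\partial X$ for $K_s=-h\,\rho_s$. Here $\rho_s$ is a defining function of $\partial X$ normalized so that $X_{\rho_s}=R_s$ there. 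A cut-off of $K_s$ then gives a Hamiltonian diffeomorphism $\tilde\chi$ (exact by Remark \ref{rmk-Ham}) that preserves $X$ and satisfies $\tilde\chi|_{\partial X}=\chi$, so $\Phi\circ\tilde\chi$ works.

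All of this is avoidable. The map $(q,p)\mapsto(q,p+dh(q))$ is not a cotangent lift, and the fibre translation is precisely what destroys $\Phi^*\lambda_{\rm can}=\lambda_{\rm can}$. The paper instead takes the genuine cotangent lift $\Phi_f(q,p)=(f(q),(df_q^{-1})^*p)$ of the nontrivial base diffeomorphism $f_\ep(q_1,q_2)=(q_1+\frac{\ep}{2\pi}\sin(2\pi q_2),q_2)$. Then $\Phi_f^*\lambda_{\rm can}=\lambda_{\rm can}$ identically, so exactness and strictness on $\partial X$ are immediate. The fibre over a point with second coordinate $q_2$ is the image of $A$ under the shear $(p_1,p_2)\mapsto(p_1,\,p_2-\ep\cos(2\pi q_2)\,p_1)$, which varies with $q_2$, so the image is not a product domain.
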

\begin{proof}
    For any diffeomorphism $f\colon \T^2\to \T^2$, we  define the cotangent lift of $f$ as follows, 
    \begin{equation}\label{eq-contangent-lift}
        \Phi_f\colon T^*\T^2\to T^*\T^2,\quad \Phi_f(q,p)\coloneqq(f(q),(df(q)^{-1} )^*p).
    \end{equation}
    For any point $(q,p)\in T^*\T^2$ and $v\in T_{(q,p)}(T^*\T^2)$, by definition $\lambda_{\rm can}|_{(q,p)}(v)=p(d\pi(v))$ where $\pi\colon T^*\T^2\to \T^2$ is the natural projection to the base. Then we have
    \begin{equation*}
    \begin{split}
        (\Phi_f^*\lambda_{\rm can})_{(q,p)}(v)
&= \lambda_{\rm can}\big|_{\Phi_f(q,p)}\!\left(d\Phi_f(v)\right)\\
& =\bigl((df_q^{-1})^{*}p\bigr)\!\left(d\pi\circ d\Phi_f(v))\right) \\
&= \bigl((df_q^{-1})^{*}p\bigr)\!\left(df_q\circ d\pi(v)\right)= p(d\pi(v))
 = \lambda_{\rm can}\big|_{(q,p)}(v)
    \end{split}
\end{equation*}
where we use  the fact that $\pi\circ \Phi_f=f\circ \pi$ in the third equality.
  Especially, since $\Phi_f^*\lambda_{\rm can}=\lambda_{\rm can}$, when restricting to $(\partial X,\lambda_{\rm can}|_{\partial X})$, the cotangent lift $\Phi_f$ gives a strict contactomorphism. 
  
Now, for a non-zero given $\ep>0$, let us specify the diffeomorphism $f: \T^2=\R^2/\Z^2\to \T^2=\R^2/\Z^2$ above as follows, 
    \[f_\ep(q_1,q_2)\coloneqq \left(q_1+\frac{\ep}{2\pi}\sin (2\pi q_2),q_2\right).\] 
Note that it is well-defined since when $q_2 = \widetilde{q}_2$ in $\R/\Z$, we have $\sin (2\pi q_2) = \sin (2\pi \widetilde{q}_2)$. Under the standard basis $\{\partial_{q_1},\partial_{q_2}\}$ of $T_q\T^2$, differentials $df_{\ep}(q)$ and $\bigl(df_\ep(q)^{-1}\bigr)^{\!*}$  are given by  
\[
df_\ep(q)
=
\begin{pmatrix}
1 & \ep \cos(2\pi q_2) \\[0.3em]
0 & 1
\end{pmatrix},
\quad
\bigl(df_\ep(q)^{-1}\bigr)^{\!*}
=
\begin{pmatrix}
1 & 0\\[0.3em]
-\ep\cos(2\pi q_2) & 1
\end{pmatrix}.
\]
Then the cotangent lift of $f_\ep$ in (\ref{eq-contangent-lift}) is given by
\begin{equation}\label{eq-exact-symp}
    \Phi_{f_\ep}(q_1,q_2,p_1,p_2)
=
\Bigl(
q_1 + \frac{\ep}{2\pi}\sin(2\pi q_2),\;
q_2,\;
p_1,\;
-\ep\cos(2\pi q_2)\,p_1 + p_2
\Bigr).
\end{equation}
Denote by $X_\ep := \Phi_{f_\ep}(X) = \Phi_{f_\ep}(\T^2 \times A)$. For any $q=(q_1,q_2)\in \T^2$, the corresponding fiber $ (X_\ep)_{q} := X_\ep\cap (\{q\}\times T_{q}^*\T^2)$ is
\begin{equation}\label{eq-fiber-shear}
 (X_\ep)_{q} = \{q\} \times  \begin{pmatrix}
1 & 0\\[0.3em]
-\ep\cos(2\pi q_2) & 1
\end{pmatrix}A.
\end{equation}
Since $\ep \neq 0$ and the fiber depends on $q_2$,
the fiber $ (X_\ep)_{q} $ from (\ref{eq-fiber-shear}) varies with respect to $q$. Therefore, $X_\ep$ is a not a product domain. 
\end{proof}
 \begin{remark}\label{rmk-lift-lem}
 \noindent (i)
Since $\Phi_{f_\ep}$ restricts to $(\partial X,\lambda_{\rm can}|_{\partial X})$  a strict contactomorphism,  the minimal action of Reeb orbits of $(X_\ep=\Phi_{f_\ep}(\partial X),\lambda_{\rm can}|_{X_\ep})$ is the same as the minimal action of $(\partial X,\lambda_{\rm can}|_{\partial X})$. As a strict contactomorphism,  $\Phi_{f_\ep}$ also preserves the volume. Therefore by definition $\Phi_{f_\ep}$ preserves the systolic ratio $\rhoT$.   Moreover,  all the closed Reeb orbits of $(X_\ep,\lambda_{\rm can}|_{X_\ep})$ are non-contractible since by Proposition \ref{prop-convex} the closed Reeb orbits of $(\partial X,\lambda_{\rm can}|_{\partial X})$  are non-contractible.

  \noindent (ii) Given a convex domain $A\subset \R^2$, for any linear map on $\R^2$ represented by a matrix $M\in \mathrm{GL}(2,\R)$, the image $M(A)$ is convex since
  \[tM(p_1)+(1-t)M(p_2)\in M(A)  \Longleftrightarrow tp_1+(1-t)p_2 \in A\]
  for any points $p_1,p_2\in A$ and $t\in [0,1].$ Therefore, if $A$ is convex,
      for any $q\in \T^2 $, the fiber $(X_{\ep})_{q}$ in (\ref{eq-fiber-shear}) is also convex. In particular, $\Phi_{f_\ep}$ maps a  codisc bundle of torus (with respect to a metric $g$ on $\T^2$) to another codisc bundle of torus (with respect to another metric $g' \in \T^2$). Here, two metrics $g$ and $g'$ are in fact related by the diffeomorphism $f_{\ep}$ in the proof of Proposition \ref{lemma-contact}. 
 % \noindent (ii) Another way to obtain a perturbation of flat metric $F$ on  codsic bundle $X=D_F^*\T^2$ so that all Reeb orbits on boundary remain non-contractible is by applying classical KAM theory. By a comment in  \cite{Ant20} due to Dmitri Burago, using  KAM theory (see for example  Theorem 3.1 in \cite{Way96}), any such a small enough perturbation preserves an invariant torus therefore preserves a geodesic foliation (given by  the perturbed Finsler metric) on $\T^2$. Consequently we obtain a perturbed metric on torus without contractible closed geodesics, which lifts to a  cosphere bundle of torus without   contractible  Reeb orbits.
 \end{remark}
Based on the lemma above we  prove Theorem \ref{thm-class} as follows.

\begin{proof}[Proof of Theorem \ref{thm-class}]

\noindent (1) Example in $\mathcal F_{\rm flat} \backslash \mathcal F_{\rm rev}$. Consider the product domain $X=\T^2\times A$  where $A\subset \R^2$ is a triangle with vertices $(0,1),(1,-1),(-1,-1) $ as shown in Figure \ref{fig:example_(1)}. The outer unit normal vector at $(\frac{1}{2},0)\in \partial A$ is $\left(\frac{2\sqrt{5}}{5},\frac{\sqrt{5}}{5}\right)$, therefore $\sqrt{5} \cdot \left(\frac{2\sqrt{5}}{5},\frac{\sqrt{5}}{5}\right)=(2,1)\in \Z^2.$   By (\ref{eq-action}),  there exists a closed Reeb orbit in $\T^2\times\{(\frac{1}{2},0)\}$ with action  $\frac{1}{2}\cdot
2+0\cdot 1=1$.  Thus $\mathrm{sys}(\partial X)\leq 1$ while $\mathrm{vol}(\partial X,\lambda_{\rm can}|_{\partial X})=4$, so $X$ satisfies (\ref{eq-codisc-sym-sys}). Moreover, since $X$ is a product domain, $\lambda_{\rm can}|_{\T^2\times \partial A}$ is dynamically convex by Proposition \ref{prop-convex}. Then by Definition \ref{dfn-sys-conx} it is systolically convex. Note that $A$ is convex but non-symmetric to origin, which implies that if we write   $X$   as a codisc bundle  of torus $D_F^*\T^2$ under some flat Finsler metric $F$, then $F$ is non-reversible.
\begin{figure}[h]
    \centering
    \includegraphics[width=0.35\linewidth]{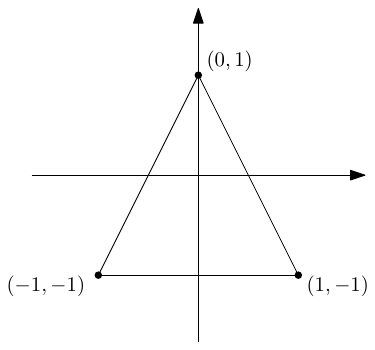}
    \caption{Example of (1).}
    \label{fig:example_(1)}
\end{figure}

(2) Example in $(\mathcal T_{T^*\T^2} \cap \mathcal S) \backslash \mathcal F_{\rm flat}$.  Consider the product domain $X=\T^2\times \Omega_p$ where $\Omega_p\subset \R^2 $ is a   domain that is symmetric with respect to $x$-axes and $y$-axes and is bounded by the curve $x^{1/p}+y^{1/p}=1$, for $p>1$, in the first quadrant (see Figure \ref{fig:example_(2)}). Obviously, it can not be expressed as a codisc bundle of torus $\T^2$ with respect to any Finsler metric since the fiber is not convex.
\begin{figure}[h]
    \centering
    \includegraphics[width=0.3\linewidth]{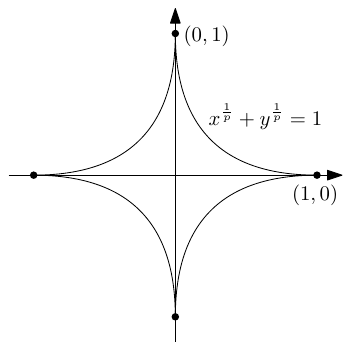}
    \caption{Example of (2), and also the example where (3) is built from.}
    \label{fig:example_(2)}
\end{figure}
Moreover, one can also estimate the systolic ratio as follows. The outer unit normal vector at $(2^{-p},2^{-p})\in \partial \Omega_p$ is $\left(\frac{\sqrt{2}}{2},\frac{\sqrt{2}}{2}\right)$ and $\sqrt{2}\cdot \left(\frac{\sqrt{2}}{2},\frac{\sqrt{2}}{2}\right)=(1,1)\in \Z^2$. By
    (\ref{eq-action}), there exists a closed Reeb orbit inside $ \T^2\times \{(2^{-p},2^{-p})\}$ with action  $2^{-p}+2^{-p}=2^{1-p}$. Therefore,
$   \mathrm{sys}(\partial X)\leq2^{1-p} $ while by considering the cube with vertices $(2^{1-p},0)$, $(0,2^{1-p})$, $(-2^{1-p},0)$, $(0,-2^{1-p})$ contained in $\Omega_p$, we have
\[\mathrm{vol}(\partial X,\lambda _{\rm can}|_{\partial X})= 2\mathrm{area}(\Omega_p)> {2^{4-2p}}.\]
Then $\rhoT(\T^2\times  \Omega_p) \leq \frac{(2^{1-p})^2}{2^{4-2p}} = \frac{1}{4}$. Therefore, by (\ref{eq-codisc-sym-sys}), we know that $X \in \mathcal S$.

\medskip

(3) Example in $\mathcal S \backslash (\mathcal F \cup \mathcal T_{T^*\T^2})$.  Consider the product domain $X=\T^2\times \Omega_p$, where $\Omega_p\subset \R^2$ is the one consider in (2) above (see Figure \ref{fig:example_(2)}). For any given $\ep \neq 0$, apply the exact symplectomorphism $\Phi_{f_\ep}$  defined in (\ref{eq-exact-symp})  on $\T^2\times \Omega_p$. By (i) in Remark  \ref{rmk-lift-lem}, the systolic ratio $\rhoT$ and the non-contractibility of all closed Reeb orbits are preserved under $\Phi_{f_\ep}$, so $\Phi_{f_\ep}(\T^2\times \Omega_p) \in \mathcal S$.  But it is not a product domain by the second conclusion of Lemma \ref{lemma-contact} so that $\Phi_{f_\ep}(\T^2\times \Omega_p)\notin \mathcal{T}_{T^*\T^2}$. By (ii) in  Remark \ref{rmk-lift-lem}, $\Phi_{f_\ep}(\T^2\times \Omega_p)$  is not fiberwise convex, therefore, $\Phi_{f_\ep}(\T^2\times \Omega_p) \notin \mathcal F$.

\medskip

(4) Example in $(\mathcal S \cap \mathcal F) \backslash \mathcal F_{\rm flat}$. Here we will provide two such examples. (i) Consider the product domain $X=\T^2\times B^2(1)$, where the fiber is the standard disk  $B^2(1)\subset \R^2$.   By (\ref{eq-action}), for the point $(0,1)\in \partial B^2(1)$ with normal vector $(0,1)$, there exists a closed  Reeb orbit in $\T^2\times\{(0,1)\}$ with action $1$. Therefore $\sys(\partial X)\leq 1$, $\mathrm{vol}(\partial X,\lambda_{\rm can}|_{\partial X})=2\pi$, so $\rhoT(X)\leq \frac{1}{2\pi}<\frac{1}{4}$. Now, applying $\Phi_{f_\ep}$   in (\ref{eq-exact-symp}) on $X$ for any non-zero $\ep$,  the image $\Phi_{f_\ep}(X)$ is still fiberwise convex by (ii) in Remark \ref{rmk-lift-lem}, so $\Phi_{f_\ep}(X) \in \mathcal F$. Moreover, by (i) in Remark \ref{rmk-lift-lem}, $\Phi_{f_\ep}$ preserves the  systolic ratio $\rhoT$ and the non-contractibility of all closed Reeb orbits, so $\Phi_{f_\ep}(X) \in \mathcal S$. However, by the second conclusion of Lemma \ref{lemma-contact}, $\Phi_{f_\ep}(X)$  is not product, so it is not in $\mathcal T_{T^*\T^2}$, hence not in $\mathcal F_{\rm flat}$. 

(ii) Consider any torus of revolution $\T^2$ embedded  inside $(\R^3,g_{\rm std})$, with induced metric $g_{\rm std}|_{\T^2}$ on it. By Theorem 3.5 in Chapter 6 in \cite{Bar66}, the metric $g_{\rm std}|_{\T^2}$ can not be flat. Moreover, by Chapter 7.9 (at the end of part A in Chapter 7)  in \cite{Gro99}, all closed geodesics on it are non-contractible. Finally, by Theorem 12.1 in \cite{Sab10}, the codisc bundle of this torus with respect to Riemannian metric $D_{g_{\rm std}|_{\T^2}}^*\T^2$ satisfies (\ref{eq-codisc-sym-sys}). Therefore this unit codisc bundle provides another desired example.  
% A codisc bundle $D_F^*\T^2$ that is systolically convex with $F$ non-flat.

\medskip
  
(5) Example in $\mathcal F_{\rm flat} \backslash \mathcal S$.  Consider the product domain $X=\T^2\times A$ where $A$ is a  triangle in $\R^2$ with vertices $(-1,-1), (1,0),(0,1)$ as in Example \ref{ex-perturb-cal} (see Figure \ref{fig:example_(5)}). Since the fiber $A$ is convex, (up to smoothing near vertices) $X=D_F^*\T^2$ with respect to some non-reversible flat Finsler metric $F$ on $\T^2.$   Moreover, by Example \ref{ex-perturb-cal},  $X$  has systolic  ratio $\rhoT(X)=\frac{1}{3}>\frac{1}{4}$. Therefore it is not systolically convex by violating  (\ref{eq-codisc-sym-sys}).
 
\medskip
 
%A codisc bundle $D_F^*\T^2$ that is dynamically convex but not systolically convex, where $F$ is flat. 

(6) Example in $(\mathcal D_{T^*\T^2} \cap \mathcal F) \backslash (\mathcal S \cup \mathcal T_{T^*\T^2})$. Start from the product domain $X=\T^2\times A$ as in (5) right above, where $A$ is a  triangle in $\R^2$ with vertices $(-1,-1), (1,0),(0,1)$ (see Figure \ref{fig:example_(5)}).  Applying perturbation $\Phi_{f_\ep}$  in (\ref{eq-exact-symp}), we know $\Phi_{f_\ep}(X)$ is not a product domain by the second conclusion in Lemma
 \ref{lemma-contact}. Moreover, $\Phi_{f_\ep}(X)$  is not systolically convex, since by (i) in Remark \ref{rmk-lift-lem},
 \[\rhoT(\Phi_{f_\ep}(X))=\rhoT(X)=\frac{1}{3}>\frac{1}{4}.\]
Again, by (i) in Remark \ref{rmk-lift-lem}, $\lambda_{\rm can}|_{\partial \Phi_{f_\ep}(X)}$ is dynamically convex (since all the closed Reeb orbits are non-contractible). Finally, by (ii) in Remark \ref{rmk-lift-lem} it  can be represented as a codisc bundle of torus $D_{F}\T^2$ with respect to some Finsler metric $F$.
 % A codisc bundle $D_F^*\T^2$ that is dynamically convex but not systolically convex, where $F$ is non-flat.

 \medskip

(7) Example in $\mathcal T_{T^*\T^2} \backslash (\mathcal S \cup \mathcal F)$. Again, let us start from the product domain $X=\T^2\times A$ as in (5) above, where $A$ is a  triangle in $\R^2$ with vertices $(-1,-1), (1,0),(0,1)$ (see Figure \ref{fig:example_(5)}). We  perturb   $A\subset \R^2$ into a  non-convex domain $A_\delta\subset \R^2$   illustrated in Figure \ref{fig:example_(7)} in the following way, where $0< \delta < < \frac{1}{2}$ (sufficiently small). 
First, remove a polygon in $A$ with vertices
\[(1,0),(0,1),\,\,(-1+2\delta,-1+\delta),\,\,(-1+ \delta,-1+2\delta),\,\,(-1+2\delta,-1+2\delta).\]
Then, for the remaining domain, adopt the similar smoothing process as in Example \ref{ex-perturb-cal} at each of these vertices, and denote the resulting domain by $A_\delta$. In particular, $A_{\delta}$ is not convex (so $X \times A_{\delta}$ is not in $\mathcal F$), but $\partial A_\delta$ is smooth. 
\begin{figure}[h]
    \centering \includegraphics[width=0.65\linewidth]{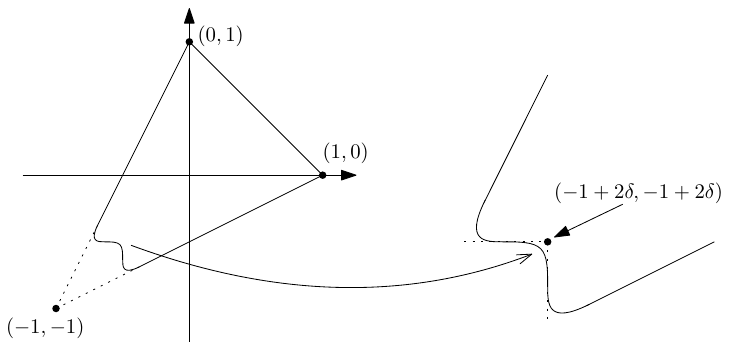}
    \caption{Example of (7).}
     \label{fig:example_(7)}
\end{figure}
By choosing perturbations near vertices small enough, we assume that line segment connecting  $(-1+\frac{4}{3}\delta, -1+2\delta)$ and $(-1+\frac{5}{3}\delta, -1+2\delta)$ is part of the boundary $\partial A_\delta$ (so, the unit normal vector along this line segment is $(0, -1)$); similarly, the line segment connecting  $(-1+2\delta,-1+\frac{4}{3}\delta)$ and $(-1+2\delta,-1+\frac{5}{3}\delta)$ is also a part of the boundary $\partial A_\delta$ (so, the unit normal vector along this line segment is $(-1, 0)$). Moreover, for any point $p=(p_1,p_2)\in \partial A_\delta$ near the corner $(-1+2\delta,-1+2\delta)$, say, $-1+\frac{5}{3}\delta\leq p_1\leq -1+2\delta$ and $-1+\frac{5}{3}\delta\leq p_2\leq -1+2\delta$, the outer unit normal vector $n(p)=(n_1(p),n_2(p))$ at $p$ satisfies $n_1(p),n_2(p)\leq 0$  by continuity. 

Consider the product domain $X = \T^2 \times A_{\delta} \in \mathcal T_{T^*\T^2}$. Suppose there exists a simple closed Reeb orbit in $\T^2\times \{p\}$ where $p = (p_1, p_2)$ satisfies $-1+\frac{4}{3}\delta \leq p_1 \leq -1 + 2\delta$ and $-1+\frac{4}{3}\delta \leq p_2 \leq -1 + 2\delta$. By  (\ref{eq-action}), let  $c_p$  be the smallest positive  rescaling  of $n(p)$    such that $c_p n(p)=(m_1,m_2)\in \Z^2 \backslash \{(0,0)\}$, then $m_1,m_2\leq 0$ and  the action of the Reeb orbit is estimated as follows, 
\[T=p_1m_1+p_2m_2\geq  \min\{-p_1,-p_2\} \geq 1-2\delta.\]
The same argument works for point $p\in \partial A_\delta$ near the corner  $(-1+\delta,-1+2\delta)$ and the corner $(-1+2\delta,-1+\delta)$. Therefore, we conclude that 
\[\sys(\T^2\times \partial A_\delta)\geq 1-\mathcal O(\delta),\quad \vol(\T^2\times \partial A_\delta,\lambda_{\rm can}|_{\T^2\times \partial A_\delta}) =  3 - \mathcal O(\delta^2)\]
 where $\mathcal O (\delta)$ and $\mathcal O (\delta^2)$ represent two scalars, depending on $\delta$ that are in order of $\delta$ and $\delta^2$ respectively. Therefore, when $\delta$ is sufficiently small, we have 
 \[\rhoT(\T^2\times A_\delta) \geq \frac{1}{3} - \mathcal O(\delta^2)> \frac{1}{4}\]
which shows that $\T^2 \times A_{\delta}$ is not systolically convex, not lying in $\mathcal S$. 

     \medskip

(8) Example in $\mathcal D_{T^*\T^2} \backslash (\mathcal T_{T^*\T^2} \cup \mathcal S \cup \mathcal F )$.  Consider the product domain  $ \T^2\times A_\delta $ in (7) right above. For any $\ep>0$, apply $\Phi_{f_\ep}$ in (\ref{eq-exact-symp}) and then the image $\Phi_{f_\ep}(\T^2\times A_\delta)$ is a desired example. Indeed, it is not a product domain by the second conclusion of Lemma \ref{lemma-contact}, so not in $\mathcal  T_{T^*\T^2} $. By (i) in Remark \ref{rmk-lift-lem} and the discussion in (7) right above, it is in $\mathcal D_{T^*\T^2}$ but not in $\mathcal S$. Finally, by (2) in Remark \ref{rmk-lift-lem}, it is not in $\mathcal F$. 

 \medskip

(9) Example in $\mathcal F \backslash \mathcal D_{T^*\T^2}$. By definition of dynamical convexity at the beginning of  Section \ref{sec-back-conx}, it suffices to construct a cosphere bundle of torus $(S_{\widetilde{g}}^*\T^2=\partial D_{\widetilde{g}}^*\T^2,\lambda_{\rm can}|_{S_{\widetilde{g}}^*\T^2})$  where there exists a contractible closed Reeb orbit with Conley-Zehnder index strictly less than $3.$ 

Let $(X,g)$ be a hyperbolic surface equipped with a complete 
Riemannian metric $g$ of constant Gaussian curvature $-1$. 
A (unit-speed) closed geodesic $\gamma\subset X$ is called a 
\emph{figure-eight closed geodesic} if it has exactly one transverse 
self-intersection point. By Theorem 4.2.4 (and the paragraph above it) in \cite{Bus92}, any such geodesic on $X$ is contained in a 
{\it pair of pants} $P\subset X$, that is, a compact surface 
diffeomorphic to a sphere with three open discs removed. 
The three boundary components of $P$ inside $X$  are determined by the two lobes of $\gamma$ and the 
complementary region (see Figure~\ref{fig:torus_figure_eight}).  
For further  details and discussion on pairs of pants, 
see Chapters~3 and~4 of~\cite{Bus92}.
\begin{figure}
    \centering
\includegraphics[width=0.55\linewidth]{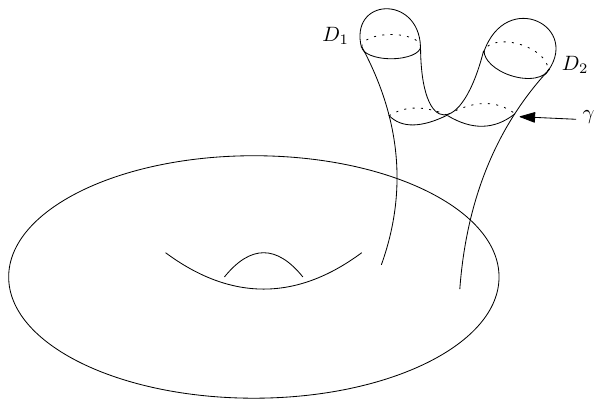}
    \caption{Surface $\Sigma$ with figure-eight closed geodesic $\gamma$.}
    \label{fig:torus_figure_eight}
\end{figure}

Starting from this  pair of pants $P$, we carry out the 
following topological construction.  Denote $\partial P=\gamma_1\sqcup \gamma_2\sqcup\gamma_3$ where each $\gamma_i$ is an embedded circle in  $X$, $1\leq i\leq 3$. 
We attach smoothly two embedded discs $D_1,D_2$ to  the boundary components $\gamma_1,\gamma_2$,
  respectively, and attach smoothly a  torus $\mathbb{T}^2 \setminus \mathring{D}$ with an open disc $\mathring{D}$ removed  along the remaining 
boundary component $\gamma_3$.  
The resulting closed smooth  surface, denoted by $\Sigma$, is  diffeomorphic to the 2-torus. Moreover, in this way the figure-eight curve $\gamma$ becomes contractible (through discs $D_1$ and $D_2$).

Then consider open tubular neighborhoods $U_i$ containing $\gamma_i$ such that locally $U_i$  is diffeomorphic to $\gamma_i\times (-\ep,\ep)$ and $U_i\cap \gamma_j=\emptyset$ for $j \neq i$. We define a smooth  cut-off function $\chi\colon \Sigma\to [0,1]$ such that
\[ \chi \equiv 1  \text{ on }  P\backslash{(U_1\cup U_2\cup U_3)}, \quad \chi \equiv 0  \text{ on }  \Sigma\backslash{(P\cup U_1\cup U_2\cup U_3)}. \]
Let $h$ be any complete Riemannian metric  on $\Sigma$. We  define a smooth metric on $\Sigma$ as follows, 
\[\widetilde{g}\coloneqq \chi g+(1-\chi)h\]
where for well-definedness we extend the metric $g$  on $P$ to the entire surface  $\Sigma$ trivially such that $g\equiv0$ on $\Sigma\backslash P$.
 By construction, $\widetilde{g}$ agrees with $g$ in a neighborhood of $\gamma$ in $P$, and in particular  $\widetilde{g}|_\gamma=g$.  
Consequently, the original figure-eight geodesic  remains a closed geodesic of $(\Sigma,\widetilde{g})$. Moreover, since $\gamma$ now bounds the attached discs, it is contractible in $\Sigma$. Thus we obtain a smooth Riemannian metric on the closed surface 
 $\Sigma$ that is diffeomorphic to $\mathbb{T}^2$, 
for which $\gamma$ remains  a closed geodesic under $\widetilde{g}$ and it is contractible.
 
Since $\gamma$ is an immersed $\tilde g$-geodesic with one
transverse self-intersection in $\Sigma$, it bounds two embedded discs, still denoted by $D_1,D_2\subset\Sigma$.
For the orientations on $D_1$ and $D_2$  induced by $\Sigma$, we have  
$\partial(D_1 - D_2) =\gamma$
as oriented $1$-chains  since the two loops of the figure-eight curve $\gamma$ carry opposite orientations when viewed as boundary components of  of $D_1$ and $D_2$. 

Consider the unit cosphere bundle $(S^*_{\tilde g}\Sigma,\lambda_{\rm can}|_{S^*_{\tilde g}\Sigma})$, which is  homeomorphic to $\T^2\times S^1$.  
By Theorem 1.5.2 (b) in \cite{Gei08}, the Reeb lift of $\gamma$ is the curve
$\{\widehat\gamma(t)\}_{t\in[0,T]}= \{\bigl(\gamma(t),\xi(t)\bigr) \}_{ t\in[0,T]} $ in $S^*_{\tilde g}\Sigma$,
where $\xi(t)=\tilde g_{\gamma(t)}(\dot{\gamma}(t),\cdot)$ is the unit covector dual to $\dot\gamma(t)$ with respect to
$\tilde g$.  The homotopy class of $\widehat\gamma$ in
$\pi_1(S^*_{\tilde g}\Sigma)\cong\pi_1(\Sigma)\oplus \pi_1(S^1)$ has two components:
a base component $[\gamma]\in\pi_1(\Sigma)$ and a fiber component
$k \in \pi_1(S^1)=\Z$ as
the winding number of the map $\xi:[0,T]\to S^1$ into the fiber circle.

Since $\gamma$ is contractible in $\Sigma$, we have $[\gamma]=0$. At each point $x\in \Sigma$, the Riemannian metric $\widetilde{g}_x$ induces a  linear isomorphism given by duality:
\[ \flat_x\colon T_x\Sigma\to T_x^*\Sigma, \quad v\to \widetilde{g}_x(v,\cdot).\]
Restricting $\flat_x$ to unit vectors gives an orientation-preserving  diffeomorphism 
\[\flat_x\colon S_x\Sigma \simeq S^1\to S_x^*\Sigma \simeq S^1\] between the unit tangent circle and the unit cotangent circles of $\Sigma$ at $x$. 
The winding number of   $\dot\gamma$  is exactly the rotation number of the figure-eight geodesic $\gamma$. Note that $\gamma$ is bounded by two embedded discs $D_1,D_2\subset \Sigma$ such that $\partial (D_1-D_2)=\gamma$.  Then we calculate:  
\[ \mathrm{rot}(\gamma=\partial (D_1-D_2) )=\mathrm{rot }(\partial D_1)-\mathrm{rot }(\partial D_1)=1-1=0.\]  
Therefore the winding number of $\xi$ is $0$ and  $\widehat\gamma$ is contractible.
 
Moreover, in our case, $\widetilde{g}|_\gamma$ is  hyperbolic therefore the Gaussian curvature $K\equiv -1$ on $\gamma$. Then we apply a similar argument as in Section 6.1 of \cite{SZ21}  to calculate the Morse index of $\gamma.$  Let $J=J(t)$ be a  Jacobi field along $\gamma$ that is normal to $\dot{\gamma}$, then the Jacobian equation is given by
\begin{equation}\label{eq-Jacobi}
\nabla_{\dot{\gamma}}\nabla_{\dot{\gamma}} J +R(J,\dot{\gamma})\dot{\gamma}=0.
\end{equation}
For surface case, for any $t\in[0,T],$ $R(J(t),\dot{\gamma}(t))\dot{\gamma}(t)$ is proportional to $J(t)$ so that we calculate 
\begin{equation}
    \begin{split}
       \langle R(J(t),\dot{\gamma}(t))\dot{\gamma}(t),J(t) \rangle&=|\dot\gamma(t)|^2|J(t)|^2\langle R(e_2(t),e_1(t))e_1(t),e_2(t)\rangle\\&=K\cdot |J(t)|^2=-|J(t)^2| .
    \end{split}
\end{equation}  
Here $\{e_1(t),e_2(t)\}$ is an orthonormal frame along $\gamma$ such that $e_1(t)=\dot\gamma(t)$ and  $e_2(t)$ is the unit normal vector field along $\gamma$. Since we choose $J(t)$  normal to $\dot \gamma(t)$ so that $J(t)=j(t)\cdot e_2(t)$ for some function $j\colon [0,T]\to \R$. We write  (\ref{eq-Jacobi})   as 
 \begin{equation}\label{eq-simp-Jacobi}
 \begin{split}
     \nabla_{\dot{\gamma}(t)}\nabla_{\dot{\gamma}(t)}J(t)- J (t)&=\nabla_{\dot{\gamma}(t)}\nabla_{\dot{\gamma}(t)}(j(t)\cdot e_2(t))- j(t)\cdot e_2(t)\\&=\nabla_{\dot{\gamma}(t)} (j'(t)\cdot e_2(t))- j(t)\cdot e_2(t)\\&=(j''(t)-j(t))\cdot e_2(t)=0
 \end{split}
 \end{equation}
 where we use the fact that $\nabla_{\dot{\gamma}(t)}  e_2(t)=0$ in the second and the third equality. 
 The two linearly independent solutions to  (\ref{eq-simp-Jacobi}) are $j_-(t)= e^t$ and  $j_+(t)=e^{-t}$. Let 
 \[ E(t)\coloneqq T\gamma(t)^{\perp }\oplus T\gamma(t)^{\perp }=\langle e_2(t)\rangle\oplus\langle e_2(t)\rangle.\]
  Then under the basis $e_2(t)$, the linearized Poincare map 
 \[P(T)\colon E(0)\to E(T), \quad P(t)(J(0),\nabla_{\dot \gamma(0)}J(0))=(J(T),\nabla_{\dot \gamma(T)}J(T))\]
 is given by $P(T)(j(0),j'(0))=(j(T),j'(T))$. Using the linearly independent solution $j_-$ and $j_+$,  we calculate that the eigenvalues of $P(T)$ are $e^{\pm T}$ where $T$ is the length of $\gamma.$ Therefore  neither one of these eigenvalues has norm one and $\gamma$ is a hyperbolic closed geodesic (cf.~Definition 6.2 in \cite{SZ21}). Then by Proposition 5 in \cite{Kli74}, since $J(t)\neq 0$ for any $t\in [0,T]$, the Morse index of $\gamma$ is   $\operatorname{ind}_{\mathrm{Morse}}(\gamma)=0$.

 Let $\gamma^*T\Sigma\to S^1$ be the rank-$2$ real vector bundle over  $S^1 $ and it is trivial sice $\Sigma$ is oriented. Therefore by  Theorem 1.2  in \cite{Web02},  the
Conley-Zehnder index $\mu_{\rm CZ}(\widehat\gamma)$ of the  Reeb lift $\widehat\gamma$ in $(S_{\widetilde{g}}^*\Sigma,\lambda_{\rm can}|_{S_{\widetilde{g}}^*\Sigma})$  satisfies $
\mu_{\rm CZ}(\widehat\gamma)
= -\operatorname{ind}_{\mathrm{Morse}}(\gamma)$.
 Since we arranged $\operatorname{ind}_{\mathrm{Morse}}(\gamma)=0$, we have
$\mu_{\rm CZ}(\widehat\gamma)=0 < 3.$
This proves the existence of a  closed  contractible Reeb orbit of
$(S_{\widetilde{g}}^*\Sigma,\lambda_{\rm can}|_{S_{\widetilde{g}}^*\Sigma})$ with Conley-Zehnder index strictly less than $3$, which implies that $\lambda_{\rm can}|_{S_{\widetilde{g}}^*\Sigma}$ is {\it not} dynamically convex.
\end{proof}
\subsection{Proof of Theorem \ref{thm-non-flat}}\label{ssec-proof-thm-B}
 Recall that given a contact manifold $(Y,\lambda=\ker\xi)$ with contact form $\lambda$ and a closed Reeb orbit $\gamma\colon [0,T]\to (Y,\lambda)$, the {\it linearized return map} $P_\gamma\colon \xi_{\gamma(0)}\to \xi_{\gamma(T)}$ is the restriction of the differential of the time $T$ Reeb flow. We say that a closed Reeb orbit $\gamma$ of  $(Y,\lambda)$ is {\it nondegenerate} if the linearized return map  $P_\gamma$  does not have $1$ as an eigenvalue. We say that the contact form $\lambda$ is nondegenerate if all closed Reeb orbits of $(Y,\lambda)$ are nondegenerate. As in Example 1.7 of \cite{Hut24}, for $\dim(Y)=3$, assume that the contact form $\lambda$ is nondegenerate, a simple closed Reeb orbit $\gamma$ is  {\it positive hyperbolic} if the eigenvalues of  $P_\gamma$ are all positive, {\it negative hyperbolic} if the
eigenvalues of $P_\gamma$ are all negative, and {\it elliptic} if the eigenvalues of $P_\gamma$ are on the unit circle in complex plane $\C$. 
In our case, let   $X=D_F^*\T^2$ be a codsic bundle of torus under flat Finsler metric $F$ on $\T^2.$ Equivalently, we write $X=\T^2\times A\subset T^*\T^2$ where $A$ is a convex   domain in $\R^2$.

By (\ref{eq-trivialization}),  the linearized Reeb flow $d\phi_R^t|_{\xi_y}$ on contact manifold $ (\T^2 \times \partial A,\lambda_{\rm can}|_{\T^2 \times \partial A})$ is given by a shear matrix for any $y=(q,p)\in\T^2\times A$ and $t>0.$ Note that by definition the linearized Reeb flow preserves the Reeb vector field so that $d\phi_R^t(R(y))=R(\phi^t(y))$. Then we can write the linearized Reeb flow $d\phi_R^t$ under the ordered basis  $(e_1(y),e_2(y),R(y))$  as
  \begin{equation}\label{eq-linearized}
          d\phi^t_R(y)=
 \begin{pmatrix}
  1  & -tf_{ {e_2(y)}}(p) &0\\
0    &1  &0 \\
0    &0  &1
\end{pmatrix}
  \end{equation}
  where $f_{{e_2(y)}}(p)$ is defined in (\ref{eq-function-f}).
Therefore, the linearized return map of Reeb flow is a unipotent shear and all the  eigenvalues are one so that  all closed Reeb  orbits of  $(\T^2 \times \partial A,\lambda_{\rm can}|_{\T^2 \times \partial A})$ are degenerate. 

Recall the definition of zeta function in Definition 1.5, Definition 1.11  and Definition 6.11 of \cite{Hut24}, we have the following proposition for $X$.
\begin{prop}\label{prop-zeta-func}
For  $X=\T^2\times A$, the associated  zeta function  satisfies
$\zeta(\T^2\times A,\lambda_{\rm can})=1.$
\end{prop}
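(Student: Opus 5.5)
Since every closed Reeb orbit of $(\T^2\times\partial A,\lambda_{\rm can})$ is degenerate, the plan is to follow Hutchings' definition for degenerate forms, namely Definition 6.11 of \cite{Hut24}. There, for a Morse--Bott (or, more generally, degenerate) contact form, $\zeta$ is obtained as a limit of zeta functions of nondegenerate perturbations. It can equivalently be computed as an exponential of a sum, over families of closed Reeb orbits, of Euler characteristics (or Lefschetz-type indices) of the orbit families, weighted by the formal variable raised to the action.

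The structure of the orbit set is read off from (\ref{eq-Reeb}) and (\ref{eq-cond-action}). Fix a primitive class $\alpha=(m_1,m_2)\in\Z^2\setminus\{0\}$. Since $A$ is strictly convex after smoothing, with $\partial A$ a closed convex curve, there is exactly one $p_\alpha\in\partial A$ with outward normal parallel to $\alpha$. The closed orbits in class $k\alpha$ ($k\geq 1$) are then exactly the $k$-fold covers of the orbits foliating the torus $\T^2\times\{p_\alpha\}$. Thus each family is a Morse--Bott $\T^2$-family, whose quotient by the Reeb $S^1$-action is a circle.

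The key step is that a Morse--Bott family $N$ of orbits contributes to $\zeta$ through the Euler characteristic of the orbit space $N/S^1$. Here one can follow Hutchings' Morse--Bott perturbation: perturbing $\lambda_{\rm can}$ by a perfect Morse function on the circle $N/S^1$ splits each family into one elliptic and one positive hyperbolic orbit of nearly equal action. In Hutchings' Definition 1.5, the elliptic orbit contributes a factor $(1-t^{\mathcal A})^{-1}$ and the positive hyperbolic orbit a factor $(1-t^{\mathcal A})$, after tracking covers. The exact signs and the even/odd covering conventions are the point to verify against \cite{Hut24}. These two factors cancel exactly, and the same holds for every multiple cover $k$, since both orbits have the same covering behaviour: the positive hyperbolic orbit has all its covers good, and the elliptic orbit is close to rotation angle $0$.

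Hence every family contributes the factor $1$, and letting the perturbation go to zero gives $\zeta(\T^2\times A,\lambda_{\rm can})=1$. Equivalently, $\chi(S^1)=0$ kills every term in the logarithm. The main obstacle is justifying that the limit in Definition 6.11 of \cite{Hut24} exists and equals this Morse--Bott count, with action-filtration uniformity over infinitely many families. I would first check that, for each action bound $L$, only finitely many families have action $<L$; this holds since the action (\ref{eq-action}) grows linearly in $|\alpha|$, because $A$ contains a neighbourhood of $0$. Then I would apply the perturbation family by family within the action window, using the local invariance of the zeta coefficients below $L$.
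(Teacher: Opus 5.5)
Your mechanism matches the paper's. You use Bourgeois' Morse--Bott perturbation to split each torus family into one elliptic and one positive hyperbolic orbit, whose Lefschetz signs cancel (equivalently $\chi(S^1)=0$). Hence the signed count $\chi^L$ vanishes for every $L$, and $\zeta=1$.

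\textbf{The gap.} You assert that $\lambda_{\rm can}|_{\T^2\times\partial A}$ is itself Morse--Bott, deducing it from ``$A$ is strictly convex after smoothing''. This is false for domains the proposition must cover; Theorem \ref{thm-non-flat} needs it on all of $\mathcal F_{\rm flat}$.
\begin{itemize}
\item Smoothing the corners of $B_1(1)$, or of the triangle in Example \ref{ex-perturb-cal}, leaves flat edges with rational normals such as $(1,1)$. The orbits in that class fill $\T^2\times(\text{segment})$, all with the same action and with return map on $\xi$ equal to the identity. So there is no unique $p_\alpha$ and no Morse--Bott torus.
\item Even when $\partial A$ contains no segment, (\ref{eq-trivialization}) and (\ref{eq-function-f}) show that $\T^2\times\{p\}$ is Morse--Bott exactly when $f_{e_2(y)}(p)\neq 0$, i.e.\ when the curvature of $\partial A$ at $p$ is nonzero. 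A strictly convex curve can have an isolated zero of curvature where the normal is rational. There $\ker(d\phi^T_R-I)$ is $3$-dimensional while the orbit family is $2$-dimensional.
\end{itemize}
In these cases your ``perturb family by family'' step has nothing to apply to.

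\textbf{The missing step.} You need a preliminary approximation by a Morse--Bott domain that does not change the invariant. The paper replaces $A$ by $\widehat A=\psi_\delta((1-\ep)A)\subset\mathrm{int}(A)$. The small rotation $\delta$ is chosen, via Sard's theorem for the Gauss map $n\colon\partial A\to S^1$ and countability of $S_\Q$, so that no critical value of $n$ is a rational direction. Then every closed orbit lies on a Morse--Bott torus, and the Bourgeois perturbation gives an $L$-approximation in the sense of Definition 6.1 of \cite{Hut24}. Proposition 6.3 of \cite{Hut24} then identifies its Euler characteristic with $\chi^L(\T^2\times A,\lambda_{\rm can})$. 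For convex $A$, an inner approximation with everywhere positive boundary curvature, such as $\{x\mid g_A(x)+\delta|x|^2\le 1\}$ with $g_A$ the Minkowski gauge of $A$, should serve the same purpose.

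With this step in place, neither convexity nor uniqueness of $p_\alpha$ matters: several tori with the same rational normal each contribute $\chi(S^1)=0$. Two further points:
\begin{itemize}
\item The vanishing of $\chi^L$ for all $L$ is also what gives dynamical tameness (Definition 6.8 of \cite{Hut24}). That condition must hold before $\zeta$ is defined at all, and your proposal does not check it.
\item The cancellation is exact only for the signed count below $L$. The factors $(1-t^{a_1})^{-1}(1-t^{a_2})$ themselves differ from $1$, because the two perturbed orbits have slightly different actions. This is why the argument is best phrased through $\chi^L$, as the paper does.
\end{itemize}
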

\begin{proof}[Proof of Proposition \ref{prop-zeta-func}]
Denote the action spectrum of  $ (\T^2 \times \partial A,\lambda_{\rm can}|_{\T^2 \times \partial A})$ as
  \begin{equation}\label{eq-dfn-spec}
      \mathrm{Spec}(\T^2\times \partial A):= \{\mathcal{A}(\gamma)\mid  \text{$\gamma$ is a closed Reeb orbit of $(\T^2\times \partial A,\lambda_{\rm can}|_{\T^2\times \partial A}$)} \}.
  \end{equation} 
Given  contact manifold $(\T^2\times \partial A,\lambda_{\rm can}|_{\T^2\times \partial A})$ and any constant $e^L\notin \R\backslash \mathrm{Spec}(\T^2\times \partial A)$, similar to the proof of Lemma 7.1 in \cite{Hut24}, we construct an {\it $L$-approximation} (defined in Definition 6.1 of \cite{Hut24}) of $\T^2\times A$ as follows.

First we  rescale $A\subset \R^2$ with respect to the origin by $A_\ep\coloneqq (1-\ep)\cdot A$ and $A_\ep\subset \mathrm{int}(A)$ for any  $1>\ep>0$. Then we slightly rotate $A_\ep$ to  $\widehat{A}\subset \R^2$ such that all closed Reeb orbits of $(\T^2\times \partial\widehat{A},\lambda_{\rm can}|_{\T^2\times \partial\widehat{A}})$ are Morse-Bott. To achieve this, observe that by  (\ref{eq-linearized}),  
$\ker(d\phi^t_R-I)_{(q,p)} $ is of dimension $2$ if and only if $ f_{{e_2(y)}}(p)\neq 0$. By (\ref{eq-function-f}), $f_{{e_2(y)}}(p)=0$ if and only if  
\begin{equation}\label{eq-f-equal-0}
    n_1(p)n_2(p(s))'|_{s=0}-n_1(p(s))'|_{s=0}n_2(p) =0.
\end{equation} 
 Then the equation (\ref{eq-f-equal-0}) implies $ \mathrm{det}(n(p),n'(p(0))=0$ so that $n(p)$ is parallel to the vector $n'(p(0)).$ On the other hand, since $n(p(s))$ is a unit normal vector, by differentiating $\langle n(p(s)),n(p(s))\rangle=1$ we have $\langle n'(p(0)),n(p(0))\rangle =0$ therefore $n'(p(0))=0$.
Now we  view the normal vector $n(p)$ as the image of $p$ under the map $n\colon \partial A_\ep\to  S^1\subset \C $.
  For vector $v=p'(0)$, $\langle v\rangle$ spans $T_{p(0)} \partial A\simeq\R$ and the differential of $n$ is computed as:
\[d_{p(0)}n(v)=  \left.\frac{d}{ds}\right|_{s=0}n(p(s))=n'(p(0)).\]
 In other words, $\ker (f_{{e_2(y)}})=\mathrm{Crit}(n)$ is exactly  the set of critical points of $n.$    By Sard's theorem, the set of critical values $n(\mathrm{Crit}(n))$  has Lebesgue measure zero in  $S^1 $.  

Suppose there exists a closed Reeb orbit $\gamma$  of action $T>0$ with
$\gamma(0)=y=(q,p)\in \T^2\times \partial A$.
By  \eqref{eq-cond-action},  $T\,V(p)\in \Z^2\backslash\{0\}$,
 where $V(p)=(V_1(p),V_2(p))$ is defined in (\ref{eq-Reeb-eqs-3}).  
Since $V(p)$ is proportional to the outer unit normal vector $n(p)$ of $\partial A_\ep$, we have
\[n(p)\in S_{\Q}\coloneqq \left\{\left.\frac{(m,n)}{\sqrt{m^2+n^2}}\,\right|\, (m,n)\in \Z^2\backslash\{0\}\right\}\subset S^1 \]
as a countable set in $S^1$.   Let $\psi_\alpha\colon \R^2\to \R^2$ be the rotation map of $\R^2$  with respect to the origin by an angle $\mathrm{arg}(\alpha)\in [0,2\pi)$ where $\alpha $ is on the unit circle.  We consider  $n\circ \psi_\alpha(\mathrm{Crit}(n\circ \psi_\alpha))$ as the  set of critical values of $n\circ \psi_\alpha$ on $\psi_\alpha(A_\ep)$. Note that $n\circ \psi_\alpha(\mathrm{Crit}(n\circ \psi_\alpha))= \psi_\alpha(n(\mathrm{Crit}(n)))$  
  since rotation by $\alpha$ on $p\in A_\ep$ also rotates the normal vector $n(p)$ by  $\alpha.$  Then we consider the set  
\begin{equation}\label{eq-set-E-union}
   E\coloneqq \{\alpha\in S^1\mid \psi_\alpha(n(\mathrm{Crit}(n)))\cap S_\Q\neq \emptyset\} =   \bigcup_{u\in S_\Q} \{\alpha\in S^1\mid  \psi_{-\alpha}(u)\in n(\mathrm{Crit}(n))\}.
\end{equation} 
Note that given $u\in S_\Q$, the elements in the set $\{\alpha\in S^1\mid  \psi_{-\alpha}(u)\in n(\mathrm{Crit}(n))\}$   one-to-one correspond  to  $n(\mathrm{Crit}(n))$ therefore the set is of measure zero. Then by (\ref{eq-set-E-union}), as a countable union of measure-zero set, $E$ is of measure zero. Consequently, $S^1\backslash E$ is dense in $S^1$.  

Thus, for any $\ep>0,$ we can choose sufficiently small $\delta>0$ such that  $\psi_\delta(A_\ep)\subset \mathrm{int}(A)$ and $n\circ \psi_\delta(\mathrm{Crit}(n\circ\psi_\delta ))\cap S_\Q=\emptyset$. For any $p\in \psi_\delta(A_\ep)$ such that $n(p)\in S_\Q$, $n(p)\notin n\circ \psi_\delta(\mathrm{Crit}(n\circ\psi_\delta ))$. Therefore, on $\psi_\delta(A_\ep),$ $ f_{e_2(y)}(p) \neq 0$ for any   $p$ satisfying $n(p)\in S_\Q $, and $\ker (d\phi_R^t-I)_{(q,p)} $ is of dimension $2.$ For simplicity, we  denote $\psi_\delta(A_\ep)$ as $\widehat{A}\subset A$.

Next we consider  the submanifold 
\[N_T := \{(q,p)\in \T^2\times \partial \widehat{A}\mid \phi^T_R(q,p)=(q,p) \}.\]
By (\ref{eq-Reeb}), if there exists a closed Reeb orbit of action $T$ passing through $(q,p)\in \T^2\times \partial \widehat{A}$, then every point in the torus fiber $\T^2\times \{p\}$ lies on a closed Reeb orbit with the same action $T$. Therefore $N_T\subset \T^2\times \widehat{A}$ is a disjoint union of tori in the form of  $\T^2\times \{p\}$ and $T_{(q,p)}N_T\simeq T_q\T^2$ is of dimension $2$. By definition,   any vector $v\in T_{(q,p)}N_T$ satisfies 
\[(d\phi_R^t -I)(v)=d( \phi_R^t -\mathrm{id})(v)=0.\]
Therefore $\ker(d\phi^t_R-I)_{(q,p)}\supset T_{(q,p)}N_T $.  Then by dimension reason $\ker(d\phi^t_R-I))_{(q,p)}= T_{(q,p)}N_T$. By
Definition 1.7 in \cite{Bou02},  the contact form $\lambda_{\rm can}|_{\T^2\times \partial \widehat{A}}$ is  Morse-Bott.

By the discussion above, if there exists a closed Reeb orbit passing through $(q,p)\in \T^2\times \partial \widehat{A}$, then  $\T^2\times \{p\}$ is an $S^1$-family of   Morse-Bott Reeb orbits.  
Following the same Morse-Bott formalism of~\cite{Bou02},  
 we   approximate $\T^2\times A $ by perturbing 
 $\T^2\times \widehat{A}$ to replace  each of the  $S^1$-families of closed Reeb orbits  with action  less than or equal to $e^L$ by two simple Reeb orbits with   action less than $e^L$  such that one is elliptic and another one is positive hyperbolic and they are nondegenerate. 
Moreover, the perturbed domain, still denoted by $ \T^2\times \widehat{A}$, is an {\it L-approximation} of $\T^2\times  A$ by Definition 6.1 in \cite{Hut24}. 
 
 Then the Euler characteristic (defined in Definition 6.7 of \cite{Hut24}) of $(\T^2\times \widehat{A},\lambda_{\rm can})$ is given by $\chi^L( \T^2\times \widehat{A},\lambda_{\rm can})=0$ since elliptic orbits have positive Lefschetz sign while positive hyperbolic orbits have negative Lefschetz sign and they cancel out in $\chi^L( \T^2\times \widehat{A},\lambda_{\rm can})$.  Therefore by Proposition 6.3  and Definition 6.7 in \cite{Hut24},    the Euler characteristic of $(\T^2\times A,\lambda_{\rm can})$ is the same as the Euler characteristic of $(\T^2\times \widehat{A},\lambda_{\rm can})$: 
\begin{equation}\label{eq-Euler-zero}
    \chi^L(\T^2\times  A,\lambda_{\rm can})=\chi^L( \T^2\times \widehat{A},\lambda_{\rm can})=0.
\end{equation}  
Therefore, by  Definition  6.8  in \cite{Hut24} and the equation (\ref{eq-Euler-zero}) above, we derive that  $(\T^2 \times   A,\lambda_{\rm can})$ is  dynamically tame. By Definition 6.11 in \cite{Hut24}  and equation (\ref{eq-Euler-zero}),  we have $\zeta_{\mathring{CH}}(\T^2\times  A,\lambda_{\rm can})=0$. Combined with  Definition 5.2 in \cite{Hut24}, we conclude that the zeta function   $\zeta(\T^2\times A,\lambda_{\rm can})=1$.\end{proof}
 
\begin{remark}\label{rmk-zeta-toric}
By Proposition 1.18 of \cite{Hut24}, for any  toric domain  $X_\Omega\subset (\R^4,\omega_{\rm std})$  with $(a,0),(0,b)\in \partial \Omega$, the associated zeta function is  
    \[\zeta\left(X_{\Omega},\lambda_{\rm std}\right)=\frac{1}{\left(1-t^{a}\right)\left(1-t^{b}\right)}.\]
 The factors  $(1-t^a)^{-1}, (1-t^b)^{-1}$ in $\zeta\left(X_{\Omega},\lambda_{\rm std}\right)$ arise from  the elliptic Reeb orbits of $(\partial X_{\Omega},\lambda_{\rm std}|_{\partial X_\Omega})$ corresponding to the  points $(a,0), (0,b) \in \partial \Omega$, respectively.  In contrast, in the product case  $(\T^2\times \partial A,\lambda_{\rm can}|_{\T^2 \times \partial A})$ admits no elliptic Reeb orbits. Consequently, no such  contributions appear in the corresponding zeta function.  
\end{remark}
 To prove   Theorem \ref{thm-non-flat},  by Proposition \ref{prop-zeta-func}, we only need to find a Finsler metric $\widetilde{F}$ such that the codisc bundle $D_{\widetilde{F}}^*\T^2$ admits a different zeta function.   Then Theorem \ref{thm-non-flat} follows from   Theorem 1.16 in \cite{Hut24}.
\begin{proof}[Proof of Theorem \ref{thm-non-flat}]
The construction of Finsler metric $\widetilde{F}$ follows from the following three steps. 

\medskip

 Step one: Construct a torus of revolution and show that every closed geodesic on it is non-contractible.
Consider a torus $\T^2\simeq \R^2/\Z^2$ in coordinates $(\varphi,\theta)$ with metric
\[g_{(\theta,\varphi)}=a(\theta)^2d\varphi \otimes d\varphi+a(\theta)^2d\theta \otimes d\theta, \quad \text{for any $( \varphi,\theta)\in(\R/\Z)^2$}\]
where $a(\theta)>0$ is a smooth profile function on $\R/\Z$ with a unique minimal $a(\theta_0)=\min_{\theta\in \R/\Z}a(\theta)$ achieved by $\theta_0\in \R/\Z$  and $a'(\theta_0)=0.$ 

 Let $\{\sigma(t)=(\varphi(t),\theta(t)) \}_{t\in [0,T]}\subset \T^2$ be  any  (unit-speed) closed curve with period $T$. The Lagrangian of $\sigma$ is given by
\[\mathcal{L}(\sigma,\dot \sigma)\coloneqq\frac{1}{2}(a(\theta)^2\dot\varphi^2+a(\theta)^2\dot\theta^2).\]
 Then $\sigma$ is a closed geodesic if and only if it satisfies the Euler-Lagrangian equation given by
\begin{equation}\label{eq-geodesic-EL}
\left\{
\begin{aligned}
&\frac{d}{dt}\!\left(a(\theta)^2\,\dot{\varphi}\right)=0,\\[4pt]
&\frac{d}{dt}\!\left(a(\theta)^2\,\dot{\theta}\right)
- a(\theta) \frac{da(\theta)}{d\theta}\,\dot{\varphi}^2
- a(\theta)\frac{da (\theta)}{d\theta}\,\dot{\theta}^2
=0.
\end{aligned}
\right.
\end{equation}

Let  $[\sigma]=(m,n)\in H_1(\T^2;\Z)\cong \Z^2$ be the homology class of $\sigma$ with respect to the coordinate basis $(\varphi,\theta)$. Then we lift $g=a(\theta)^2d\varphi^2+a(\theta)^2d\theta^2 $ to the plane $\R^2$ and lift $\sigma$ to  a  geodesic segment  $\widetilde{\sigma}=(\widetilde{\varphi},\widetilde{\theta})\subset \R^2$ with $\widetilde{\varphi}(T)-\widetilde{\varphi}(0)=m$  as the winding number of $\varphi$ to $\R/\Z$ and $\widetilde{\theta}(T)-\widetilde{\theta}(0)=n$ as the winding number of $\theta $ to $\R/\Z.$
 By the first equation in (\ref{eq-geodesic-EL}), $a(\theta)^2\dot\varphi$ is a constant. Since $a(\theta)>0$ for any $\theta$, we conclude that the sign of $\dot\varphi$ is fixed.  If $\dot\varphi\neq 0$, then
we have
\[m=\widetilde{\varphi}(T)-\widetilde{\varphi}(0)=\int_{0}^T\dot{\widetilde{\varphi}}(t)dt=\int_{0}^T\dot{ \varphi }(t)dt\neq 0\]
since the sign of $\dot\varphi$ is fixed. 
If $\dot\varphi= 0$, then $\sigma$ is unit-speed implies 
\[1=a(\theta)^2(\dot\varphi^2+  \dot\theta^2)=a(\theta)^2\dot\theta^2.\]
 Therefore  $\dot\theta\neq 0$  so that the sign of $\dot\theta$ is fixed.   Similarly we have
\[n=\widetilde{\theta}(T)-\widetilde{\theta}(0)=\int_{0}^T\dot{\widetilde{\theta}}(t)dt=\int_{0}^T\dot{ \theta }(t)dt\neq 0.\]
Then the above discussion implies that $[\sigma]=(m,n)\neq 0\in H^1(\T^2;\Z)$ so that $\sigma$ is  always non-contractible.

\medskip

 Step two: Characterize the shortest closed geodesic and show that it is unique up to reparametrization.   Let $\{\gamma(t)=((t,\theta_0)\}_{t\in [0,1]}$ be a closed curve on $\T^2$ with $\varphi(t)=t$, $\theta(t)=\theta_0$. Then for the first equation  in (\ref{eq-geodesic-EL}), we have $\ddot{\varphi}=0$ and $\frac{d}{dt}\!\left(a(\theta)^2\,\dot{\varphi}\right)=0$. Moreover, $a'(\theta_0)( =\frac{d}{d\theta}|_{\theta=\theta_0}a(\theta))=0$ and $\dot\theta(t)=0$ implies the second equation in (\ref{eq-geodesic-EL}) vanishes. Therefore $\{\gamma(t)\}_{t\in [0,1]}$ is a closed geodesic on $(\T^2,g)$ with length 
\[l_g(\gamma)=\int_{0}^1\sqrt{g(\dot{\gamma}(t),\dot{\gamma}(t))}dt=a(\theta_0).\]
We  show that $\gamma(t)=(t,\theta_0)$ is the unique (up to reparametrization) length-minimizing geodesic among all closed  geodesics.

For any closed geodesic $\{\sigma\}_{t\in [0,T]}\subset \T^2$, using $\sqrt{x^2+y^2}\ge |x|$ and the fact that $a(\theta)\ge a(\theta_0)$, we obtain
\begin{equation}\label{eq:length-lower-bound}
\begin{aligned}
l_g(\sigma)
&=\int_0^T\sqrt{a(\theta(t))^2\,\dot{\varphi}(t)^2+a(\theta(t))^2\,\dot{\theta}(t)^2} dt \\
&\ge \int_0^T a(\theta(t))\,|\dot{\varphi}(t)| dt
\ge a(\theta_0)\int_0^T|\dot{\varphi}(t)| dt.
\end{aligned}
\end{equation}
Meanwhile we  have
\[
\int_0^T|\dot{\varphi}(t)| dt =\int_0^T \left|\dot{\widetilde{\varphi}}(t)\right| dt\ \ge\ \left|\int_0^T \dot{\widetilde{\varphi}}(t)  dt\right|
=| {\widetilde{\varphi}}(T)- {\widetilde{\varphi}}(0)|
=|m|.
\]
Combining the inequality above with \eqref{eq:length-lower-bound} yields the  estimate
\begin{equation}\label{eq:mn-bound}
l_g(\sigma)\ \ge\ |m|\,a(\theta_0).
\end{equation}
Similarly, we have  
\begin{equation}\label{eq-n-bound}
    l_g(\sigma)\ \ge\ |n|\,a(\theta_0).
\end{equation} 
In particular, since $(m,n)\neq 0$,  $l_g(\sigma)\geq a(\theta_0)$ with equality holds if and only if  $|(m,n)|=1$ and  $a(\theta(t))\equiv a(\theta_0)$  by (\ref{eq:length-lower-bound}). While $a(\theta)$ has a unique minimal at $a(\theta_0),$ this implies $\theta(t)\equiv \theta_0$.  Hence the lower bound of $l_g(\sigma)$ is sharp and is attained precisely by the parallel $\gamma(t)=(t,\theta_0)$ up to reparametrization.

\medskip
 
 Step three:  Perturb the metric 
 $g$  to obtain a non-reversible, nondegenerate Finsler metric  $\widetilde{F}$ that admits a unique shortest closed geodesic.
For simplicity we take $a(\theta_0) =1$ as the minimal of $a(\theta)$ discussed above.
For small  $ \ep>0$, we  define a non-reversible Randers metric on $\T^2$ as 
\[F (q,v)\coloneqq\sqrt{g_q(v,v)}-\ep d\varphi(v), \quad \text{for any $q=(\varphi,\theta)\in \T^2$ and $v\in T_q\T^2$}.\]
Here we require $\ep\|d\varphi\|_g=\ep a(\theta_0)=\ep<1$ so that $F(q,v)$ is everywhere positive.
Then by a standard fact of Randers metric (cf.~Exercise 11.3.4 in \cite{BCS00}),  the geodesic equation of $F$ is the same as the geodesic equation of Riemannian metric $g$ since $d\varphi$ is exact. The length of $\gamma$ now becomes 
\begin{equation}\label{eq-length-F}
    l_F(\gamma)=\int_{0}^1\left(\sqrt{g (\dot\gamma(t),\dot\gamma(t))}-\ep d\varphi(\dot\gamma(t))\right)dt=l_g(\gamma)-\int_{0}^1\ep d\varphi(\partial_{\varphi})dt=1-\ep.
\end{equation} 
For any other closed geodesic  $\{\sigma(t)\}_{t\in [0,T]}$ (including the inverse of $\gamma$), 
\begin{equation*}
    \begin{split}
        l_F(\sigma)&=\int_{0}^T\left(\sqrt{g(\dot{\sigma}(t),\dot{\sigma}(t))}-\ep d\varphi(\dot{\sigma}(t))\right)dt \\&= l_g(\sigma)-\ep\int_{\sigma}d\varphi= l_g(\sigma)-\ep \langle [d\varphi],[\sigma]\rangle= l_g(\sigma)-m\ep.
    \end{split}
\end{equation*} 
where  $[\sigma]=(m,n)\in H_1(\T^2;\Z)$.  Note that by (\ref{eq:mn-bound}), for $m\neq 0$ we have
\[l_g(\sigma)-m\ep\geq |m|-m\ep\geq |m|(1-\ep)\geq  1-\ep =l_F(\gamma) \]
 with equality if and only if $\sigma$ is a reparametrization of $\gamma.$  For $m=0$, $n\neq 0$ by the discussion in Step (i). By (\ref{eq-n-bound}) we have 
 \[l_F(\sigma)=l_g(\sigma)\geq |n| a(\theta_0)\geq 1\] 
 Therefore, we conclude that up to reparametrization the shortest closed geodesic  of $(\T^2,F)$ is $\gamma$ with length $1-\ep$ while the length of all other closed  geodesics of $(\T^2,F)$ has length $> 1-\ep.$

 We further apply  a standard bumpy metric perturbation (see Theorem 4 in \cite{RT22}) to this Finsler metric   so that after a $C^\infty$-small  perturbation, the metric is still non-reversible and 
all closed  geodesics are nondegenerate. Denote this metric by $\widetilde{F}$. By choosing such a perturbation small enough, up to $o(\ep)$-difference, $\gamma$    has length 
  $l_{\widetilde{F}}(\gamma)=  1-\ep $. Moreover, the length of all other closed  geodesics of $(\T^2,\widetilde{F})$ has length $> 1-\ep.$  

  \medskip
 
 By the argument above, $\widetilde{F}$ is bumpy so that the contact form
  $ \lambda_{\rm can}|_{S_{\widetilde{F}}^*\T^2}$ is nondegenerate and its Reeb flow corresponds to the
co-geodesic flow of $\widetilde{F}$.  
 Then the shortest closed geodesic $\gamma$ corresponds to  a closed simple Reeb orbit $\widehat{\gamma}$
with action $\mathcal A(\widehat{\gamma})=l_{\widetilde{F}}(\gamma)= 1-\ep$, and every other closed Reeb orbit (including the iterations of $\widehat{\gamma}$) has
action $> 1-\ep$. Then denote $\zeta(D_{\widetilde{F}}^*\T^2) =\zeta(S_{\widetilde{F}}^*\T^2,\lambda_{\rm can}|_{S_{\widetilde{F}}^*\T^2})$ and by Example 1.7 in \cite{Hut24}, the zeta function of $D_{\widetilde{F}}^*\T^2$  can be calculated as follows, 
\[
\zeta(D_{\widetilde{F}}^*\T^2) 
=
\prod_{\alpha \text{ simple Reeb orbit}}
\left\{
\begin{array}{ll}
\left(1 - t^{\mathcal{A}(\alpha)}\right)^{-1}, & \alpha \text{ elliptic},\\[0.3em]
1 - t^{\mathcal{A}(\alpha)},        & \alpha \text{ positive hyperbolic},\\[0.3em]
1 + t^{\mathcal{A}(\alpha)},        & \alpha \text{ negative hyperbolic}.
\end{array}
\right.
\]
In particular,   the factor corresponding to $\widehat{\gamma}$ in zeta function appears as:
\begin{itemize}
    \item [(i)] if $\widehat{\gamma}$ is elliptic, $\zeta(D_{\widetilde{F}}^*\T^2)$ has a factor $(1 - t^{1-\ep})^{-1}$;
    \item [(ii)] if $\widehat{\gamma}$ is positive (negative) hyperbolic, $\zeta(D_{\widetilde{F}}^*\T^2)$ has a factor $(1 \pm t^{1-\ep})$.
\end{itemize}
 In either way,  the lowest term $t^{1-\ep}$ in $\zeta(D_{\widetilde{F}}^*\T^2)$  can not be canceled. Indeed, all other factors involve powers $t^{\mathcal{A}(\alpha)}$ with $\mathcal{A}(\alpha)>\mathcal{A}(\widehat{\gamma})=1-\ep$, so they contribute only higher-order terms. Expanding the product $\zeta(D_{\widetilde{F}}^*\T^2)$ formally as a power series in $t$, the coefficient of $t^{1-\ep}$ is nonzero (equal to $\pm1$, depending on the type of $\widehat{\gamma}$).
 Therefore,
$\zeta(D_{\widetilde{F}}^*\T^2) \neq 1.$ 

Then  by  Theorem 1.16 in \cite{Hut24}, zeta function is invariant under exact symplectomorphisms. By Proposition \ref{prop-zeta-func}, $\zeta\equiv 1$ on codisc bundle of flat torus $\F_{\rm flat}$. Then the computation above shows that $D_{\widetilde{F}}^*\T^2$ is not exact symplectomorphic to  any codsic bundle of flat torus.
\end{proof}
%\begin{proof}[An alternative example]
%By Proposition 5.8 in \cite{SZ21}, there exists a Riemannian metric $g$
%  on  $1$-bulked torus $\T^2\subset \R^3$ such that:
%\begin{itemize}
%  \item [(i)] there is a unique (up to reparameterization) non-contractible simple closed $g$-geodesic $\{\gamma(t)\}_{t\in [0,1]}$ of
 %       length $l_g(\gamma)=\int_0^1{  \sqrt{g(\dot \gamma(t),\dot \gamma(t))} dt}=1$. In terms of coordinates $(q_1,q_2)$ of $\T^2\simeq \R^2/\Z^2$, $\gamma $ is represented by  $\{q_2=0\}$. 
 % \item [(ii)] Any closed geodesic different from $\gamma$ has length bounded below by a fixed constant $\gg1$. 
%\end{itemize}
%\end{proof}

\subsection{Proof of Theorem \ref{thm-large-scale-one} and Theorem \ref{thm-large-scale-two}}
 
We adopt the same strategy in \cite{Tra95} to prove the following proposition which builds the connections between toric domains $\mathcal{T}_{\R^{2n}}$ and product domains   $\mathcal{T}_{T^*\T^n} $  with $n\geq 2.$
\begin{prop}\label{prop-toric}
  Let  $X_\Omega\subset( \R^{2n},\omega_{\rm std})$ be a monotone toric domain. Then for any $0<\ep <1$,  there exist   symplectic embeddings:
    \begin{equation*}
        ((1-\ep)X_\Omega,\omega_{\rm std})\hookrightarrow (\T^n\times \mathrm{int}( \Omega),\omega_{\rm can})\hookrightarrow  (X_\Omega,\omega_{\rm std}).
    \end{equation*}
\end{prop}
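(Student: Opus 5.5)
The second embedding is essentially tautological, so I would dispose of it first. The map $\Phi\colon \T^n\times \R^n_{>0}\to \C^n$, $\Phi(q,p)=\bigl(\sqrt{p_1/\pi}\,e^{2\pi i q_1},\ldots,\sqrt{p_n/\pi}\,e^{2\pi i q_n}\bigr)$, is the action-angle map. A direct computation gives $\Phi^*\bigl(\sum_j dx_j\wedge dy_j\bigr)=\sum_j dp_j\wedge dq_j$, which up to the sign convention for $\omega_{\rm can}$ is the canonical form. If the sign disagrees, I would replace $q$ by $-q$. Since $\mu\circ\Phi(q,p)=p$, the map $\Phi$ is a symplectomorphism from $\T^n\times \mathrm{int}(\Omega)$ onto $X_{\mathrm{int}(\Omega)}=\mu^{-1}(\mathrm{int}(\Omega))\subset X_\Omega$. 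This gives the second arrow, and it also justifies the identification used in Definition \ref{dfn-HBM-toric}.

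For the first arrow I would follow the Traynor trick \cite{Tra95}. The aim is to embed $(1-\ep)X_\Omega$ symplectically into $X_{\mathrm{int}(\Omega)}$, and then compose with $\Phi^{-1}$. The model case is $n=1$. There an area-preserving diffeomorphism takes the disc $B^2(\pi R^2=a)$ minus a thin slit (or a slightly shrunk disc) onto the open annulus-like region $\{0<\pi|z|^2<a\}$. More usefully, in action-angle terms it takes a disc of area $(1-\ep)a$ onto a region $\{(q,p): 0<p<a\}$ of the cylinder $T^*S^1$, with profile $p\in(\delta(q),\, \delta(q)+(1-\ep)a)$ that stays inside $(0,a)$. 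The construction is the standard one: cut the disc along a radius, straighten the complement of the cut into a rectangle $(0,1)\times(0,(1-\ep)a)$ via an area-preserving map, and place that rectangle in $\T^1\times(\ep a/2, a)$. Taking products coordinatewise, I get a symplectic embedding of the split-disc region $\prod_j B^2\bigl((1-\ep)a_j\bigr)$ minus a codimension-one set. That set has measure zero but is not open, so I will instead use an embedding defined on all of the shrunk domain whose image lies in $\T^n\times(\text{shifted box})$.

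To handle a general monotone $\Omega$ rather than a box, I would use toric descriptions. Write $(1-\ep)X_\Omega=\mu^{-1}((1-\ep)\Omega)$. The coordinatewise map above sends a point with moment coordinates $x=(x_1,\ldots,x_n)$ to a point with action coordinates $p=x+\eta(\text{angles})$, where each shift $\eta_j\in(0,\ep' )$ depends only on the $j$-th angle and is small. The key step is then to check that $x\in(1-\ep)\Omega$ and $0\le\eta_j<\ep'$ force $x+\eta\in\mathrm{int}(\Omega)$. For $\ep'$ small relative to $\ep$ and to the inradius data of $\Omega$, this is where monotonicity enters. Monotonicity makes $\Omega$ a down-set, i.e. closed under decreasing coordinates within $\R^n_{\ge0}$, with outward normals nonnegative. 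Hence $\Omega$ contains $\{y\ge 0: y\le z\}$ for $z\in\Omega$. Scaling by $1-\ep$ gives a uniform margin: $(1-\ep)\Omega+[0,\ep'']^n\subset \mathrm{int}(\Omega)$ for some $\ep''>0$, using compactness and star-shapedness with respect to the origin. This is the main obstacle. I would prove it by taking $x\in(1-\ep)\Omega$, writing $x=(1-\ep)z$ with $z\in\Omega$, and noting that $x+\eta\le z'$ for a point $z'$ slightly inside $z$ in the positive directions. To justify that, I would use that $\partial\Omega\cap\R^n_{>0}$ has nonnegative normals, so moving from $(1-\ep)z$ toward $z$ and then adding a small positive vector stays in $\Omega$. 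The extra care needed is for points near the coordinate hyperplanes, where I would verify that the shifted point indeed has all coordinates strictly positive (that is the role of $\eta_j>0$).

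Finally, I would make the one-dimensional maps smooth and genuinely symplectic on the whole closed disc of area $(1-\ep)a_j$. The approach is to use an area-preserving embedding of the disc $\{\pi|z|^2\le (1-\ep)a_j\}$ into $\{0<\pi|z|^2<(1-\ep/2)a_j\}$ that moves action by at most $\ep' $, which is possible by Moser/Dacorogna–Moser on a slit disc. Since the embedding must depend only on the $j$-th factor yet respect the non-product shape $\Omega$, I would apply it with the uniform bound $a_j=\max_\Omega x_j$. I would then use the margin estimate above, which only requires action displacement bounded by $\ep'$, not the product structure. Composing with $\Phi^{-1}$ gives the first embedding.
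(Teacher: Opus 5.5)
Your final argument is essentially the paper's proof. The action--angle map $\Phi$ gives the second arrow. For the first arrow you take a product of planar area-preserving embeddings of the closed disc into the punctured disc that raise $\pi|z|^2$ by at most a small amount, combined with the down-set property of $\Omega$ and a margin estimate; this is exactly the paper's use of Traynor's slit-disc map $\sigma$ with $|\sigma(z)|^2\le |z|^2+\ep^3$, which the paper constructs explicitly by a Hamiltonian flow in Appendix B rather than by a Moser argument. Your intermediate descriptions (a slit disc mapped onto an annulus, or a pure action shift $p=x+\eta(q)$) cannot hold near the origin, but the final version does not use them. State the margin with a strictly positive shift, $(1-\ep)\Omega+\ep''\mathbf{1}\subset\mathrm{int}(\Omega)$, with positivity of the image actions coming from avoiding the origin; keeping $\ep''$ independent of $\ep$, as you do, is the safe way to phrase the paper's condition $(1-\ep)^2\Omega+\pi\ep^3\subset\mathrm{int}(\Omega)$.
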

\begin{proof}
    As in Proposition 4, Section 4.4 in \cite{HZ95} (also see Lemma 2.1 in \cite{Jia93}), given any point $(q_1,\ldots,q_n,p_1,\ldots,p_n)\in  \T^n\times \R^n_{> 0} \subset T^*\T^n  $, define a  symplectic embedding 
    $\Phi\colon (\T^n\times \R^n_{> 0} \subset T^*\T^n,\omega_{\rm can}) \to (\C^n\simeq \R^{2n},\omega_{\rm std})$ by
    \begin{equation}\label{eq-emb-one}
         \Phi(q_1, \ldots, q_n, p_1,\ldots, p_n)= \left( \sqrt{\frac{p_1}{\pi}}e^{2\pi i q_1},\ldots,\sqrt{\frac{p_n}{\pi} }e^{2\pi i q_n}\right).  
    \end{equation}
Moreover, when restricted to $\T^n\times \mathrm{int}(\Omega)$, the moment  image
\[\mu(\Phi(q_1, \ldots, q_n, p_1,\ldots, p_n))=(p_1,\ldots,p_n)\in \mathrm{int}(\Omega)\]
  where $\mu$ is the moment map defined in (\ref{eq-toric-domain}). Therefore, the embedding from $\T^2\times \mathrm{int}(\Omega)$ to $X_{\mathrm{int}(\Omega)}$ gives a symplectomorphism: 
\begin{equation}\label{eq-symplectomorphism}
    (\T^n\times \mathrm{int}( \Omega),\omega_{\rm can}) \simeq  (X_{\mathrm{int}(\Omega)},\omega_{\rm std}).
\end{equation} 
Conversely, similar to Proposition 5.2 in \cite{Tra95},
consider $SD(s)\subset B^2(s)$ to be the “slit” disc:
\begin{equation}\label{eq-slit-disc}
   SD(s)\coloneqq B^2 (s)\backslash\{ (x,y)\in B^2 (s) \mid x\geq 0 \text{ and } y=0\}. 
\end{equation} 
Then choose any large enough constant $R>0$ such that $X_\Omega\subset B^{2n}(R)$.  Let $\ep>0$ be small enough so that $(R-\ep)^2+\ep^3<R^2$ and
\begin{equation}\label{eq-shift-Omega}
    (1-\ep)^2\Omega+\pi  \ep^3 \coloneqq \{(p_1+\pi \ep^3,\ldots,p_n+\pi\ep^3)\mid (p_1,\ldots p_n)\in (1-\ep)^2\Omega\}\subset \mathrm{int}(\Omega).
\end{equation} 
 Such $\ep$ always exists since $\ep^3$ is of   higher order than $\ep^2$. Define 
 \begin{equation}\label{eq-area-pres}
     \sigma  \colon B^2( R-\ep)\to SD(R) 
 \end{equation}
 to be an area-preserving (with respect to the standard area form $dx \wedge dy$) embedding such that for any $(x,y)\in B^2(R-\ep),$
 \[|\sigma(x,y)|^2\leq x^2+y^2 +\ep^3. \] 
For an illustrative picture of $\sigma$, see Figure \ref{fig:slit-disc}.
\begin{figure}[h]
    \centering
    \includegraphics[width=0.7\linewidth]{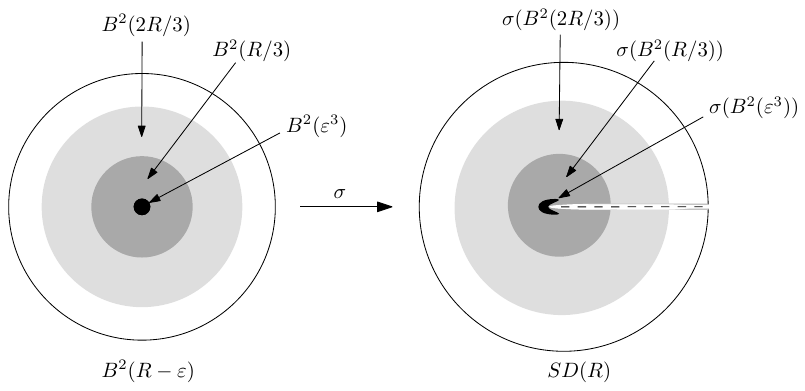}
    \caption{Area-preserving map $\sigma$.}
    \label{fig:slit-disc}
\end{figure}
An explicit construction of such $\sigma$ can be found in Appendix \ref{sec-app-B}. This induces a symplectic embedding (see an illustrative picture in Figure \ref{fig:embedding-Psi}) 
\begin{equation}\label{eq-emb-toric}
   \Psi\colon   (1-\ep)\cdot X_\Omega\hookrightarrow  X_{\mathrm{int}(\Omega)}, \quad \Psi (x_1,y_1, \ldots, x_n, y_n)=(\sigma(x_1,y_1),\ldots, \sigma(x_n,y_n)). 
\end{equation} 
\begin{figure}[h]
    \centering
    \includegraphics[width=0.65\linewidth]{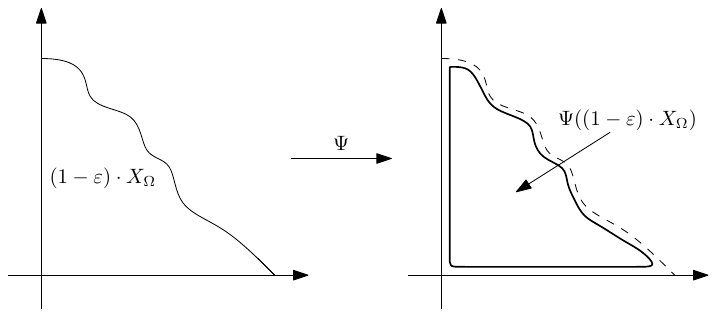}
    \caption{Symplectic embedding map $\Psi$ when dimension $2n=4.$}
    \label{fig:embedding-Psi}
\end{figure}
The map is symplectic since 
 \[\Psi^*\omega_{\rm std}=\Psi^* \left(\sum_{i=1}^n dx_i\wedge dy_i \right)=\sum_{i=1}^n\sigma^*(dx_i\wedge dy_i)=\sum_{i=1}^ndx_i\wedge dy_i=\omega_{\rm std}.\]
 
By our defining property  on $\sigma$, for any $(x_1,y_1,\ldots,x_n,y_n)\in (1-\ep)X_\Omega$,  we have
 \[0< \pi|\sigma(x_i,y_i)|^2 \leq    \pi (x_i^2+y_i^2+\ep^3) \quad \text{for any $1\leq i\leq n$.} \]
 For any $(p_1,\ldots,p_n)\in \Omega$ and $(p_1',\ldots,p_n')\in \R^n_{\geq 0}$ satisfying $p_i'\leq p_i $ for all $i=1,\ldots,n$, we have $(p_1',\ldots,p_n')\in \Omega$, since  $X_\Omega$  is a monotone toric domain. 
By definition of $\mu$, $(\pi(x_1^2+y_1^2), \ldots, \pi (x_n^2+y_n^2))\in (1-\ep )^2\Omega$ and by (\ref{eq-shift-Omega}) we have
\[\mu(\Psi (x_1,   y_n,\ldots ,x_n,  y_n))=\left( \pi|\sigma(x_1,y_1)|^2 ,\ldots, \pi|\sigma(x_n,y_n)|^2 \right) ,\]
which falls into $(1-\ep)^2\Omega+\pi \ep^3\subset \mathrm{int}(\Omega)$ by monotone assumption on $X_\Omega$ and our choice of $\ep.$
   Therefore $\Psi$ indeed defines a symplectic embedding from
   \[((1-\ep)X_{\Omega},\omega_{\rm std})\hookrightarrow(X_{\mathrm{int}(\Omega)},\omega_{\rm std})\simeq (\T^n\times \mathrm{int}(\Omega),\omega_{\rm can}) \]
   as desired. 
\end{proof}
\begin{remark}
In the proof above, the monotone assumption on  the toric domain $X_\Omega$ of Proposition \ref{prop-toric} is to guarantee that  for any $(p_1,\ldots,p_n)\in \Omega$ and $(p_1',\ldots,p_n')\in \R^n_{\geq 0}$ satisfying $p_i'\leq p_i $ for all $i=1,\ldots,n$, we still have $(p_1',\ldots,p_n')\in \Omega$.  For a general toric domain that is not necessarily monotone, the conclusion in Proposition \ref{prop-toric} may fail. %Consequently the map $\Psi$ defined in (\ref{eq-emb-toric}) need not send $(1-\ep) X_\Omega$ into $X_{\mathrm{int}(\Omega)}.$
\end{remark} 
The above proposition implies the following result. 
\begin{cor}\label{cor-capacity-BM}
    For any monotone toric domains $X_\Omega,X_{\Omega'}\subset (\R^{2n},\omega_{\rm std})$ and any symplectic capacity $c$,  we have
    \begin{equation}\label{eq-cap-CBM}
        c(X_\Omega)=c(\T^n\times \Omega), \quad d_{\rm BM}(X_\Omega,X_{\Omega'})=d_{\rm BM}(\T^n \times \Omega,\T^n\times \Omega' ) 
    \end{equation} 
    where $d_{\rm BM}$ is the coarse Banach-Mazur distance defined in (\ref{eq-CBM}).
\end{cor}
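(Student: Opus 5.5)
The plan is a sandwich argument built on Proposition \ref{prop-toric}. For a monotone toric domain $X_\Omega$ and any $0<\ep<1$, that proposition gives symplectic embeddings
\[
(1-\ep)X_\Omega \hookrightarrow \T^n\times \mathrm{int}(\Omega) \hookrightarrow X_\Omega .
\]
We also need a comparison between $\T^n\times\mathrm{int}(\Omega)$ and $\T^n\times\Omega$. The inclusion $\T^n\times\mathrm{int}(\Omega)\subset \T^n\times \Omega$ is trivial. In the other direction, monotonicity forces $\Omega$ to be star-shaped with respect to the origin, since downward closedness contains the segment to $0$. Hence $\T^n\times (1-\ep)\Omega\subset \T^n\times\mathrm{int}(\Omega)$ once $\Omega$ is the closure of its interior; for the boundary pieces lying on the coordinate hyperplanes, we handle them by composing with a small shift as in (\ref{eq-shift-Omega}). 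Finally, under the fiberwise scaling $p\mapsto Cp$, the domain $\T^n\times C\Omega$ is the fiber-scaled version of $\T^n\times\Omega$. This matches the scaling in (\ref{eq-CBM}): $\mu$ is quadratic in $z$, so scaling $z$ by $\sqrt C$ scales $\Omega$ by $C$, and the flow of $X_{\rm rad}$ with the factor $\tfrac12$ is designed to produce exactly this.

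For the capacity equality, we use monotonicity of $c$ under embeddings. This gives
\[
(1-\ep)^2 c(X_\Omega)\le c(\T^n\times\mathrm{int}(\Omega))\le c(\T^n\times\Omega),
\]
using conformality with the factor $(1-\ep)^2$, since scaling by $(1-\ep)$ in $\R^{2n}$ multiplies $\omega$ by $(1-\ep)^2$. In the other direction, $\T^n\times(1-\ep)\Omega$ embeds into $\T^n\times\mathrm{int}(\Omega)\hookrightarrow X_\Omega$. The fiber scaling $p\mapsto (1-\ep)p$ is conformally symplectic with factor $(1-\ep)$, so we get $(1-\ep)c(\T^n\times\Omega)\le c(X_\Omega)$. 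Letting $\ep\to0$ yields $c(X_\Omega)=c(\T^n\times\Omega)$.

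For the distance, suppose $X_\Omega\hookrightarrow C\cdot X_{\Omega'}$ with $C\cdot X_{\Omega'}=X_{C\Omega'}$. Since $C\Omega'$ is again monotone, we chain
\[
\T^n\times(1-\ep)\Omega\hookrightarrow X_\Omega\hookrightarrow X_{C\Omega'}\hookrightarrow \T^n\times \mathrm{int}(C\Omega')/(1-\ep)^2,
\]
where the last step applies Proposition \ref{prop-toric} to $X_{C\Omega'}$ after rescaling. This gives $\T^n\times\Omega\hookrightarrow C(1-\ep)^{-3}\cdot(\T^n\times\Omega')$. By symmetry, $d_{\rm BM}(\T^n\times\Omega,\T^n\times\Omega')\le d_{\rm BM}(X_\Omega,X_{\Omega'})+O(\ep)$. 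The reverse inequality follows from the same chain read backwards: $X_\Omega$ is squeezed into $\T^n\times\Omega$ up to factor $(1-\ep)^{-2}$, then into $\T^n\times C\Omega'$, then into $X_{C\Omega'}$. Letting $\ep\to0$ gives equality, because the infimum in (\ref{eq-CBM}) absorbs arbitrarily small multiplicative losses. Embeddings here need not be exact, which is consistent with the paper's convention for $d_{\rm BM}$ on $T^*\T^n$.

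The main obstacle is bookkeeping rather than substance. We must make sure the scaling conventions on the two sides match, namely that $C\cdot X_\Omega = X_{C\Omega}$ under $X_{\rm rad}$ versus the fiber scaling $\T^n\times C\Omega$. We must also check that the closed set $\T^n\times\Omega$, which includes the boundary where some $p_i=0$, is handled correctly. The point to verify first is that Proposition \ref{prop-toric} genuinely lands in $\T^n\times\mathrm{int}(\Omega)$, so that closure issues only cost an $\ep$.
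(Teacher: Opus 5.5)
Your proposal is correct and is essentially the paper's argument: Proposition \ref{prop-toric} sandwiches $\T^n\times\mathrm{int}(\Omega)$ between $(1-\ep)X_\Omega$ and $X_\Omega$, the passage from $\mathrm{int}(\Omega)$ to $\Omega$ is a small rescaling plus a fiber translation (the paper writes this as $\Omega\subset(1+\ep)\mathrm{int}(\Omega)+v$), and the same chains of embeddings are fed into the definition of $d_{\rm BM}$. One thing to tidy: the literal inclusion $(1-\ep)\Omega\subset\mathrm{int}(\Omega)$ never holds, since $(1-\ep)\Omega$ contains the origin, so the shift is always needed, and you should choose its size depending on $\ep$ rather than tie it to the value $\pi\ep^3$ of (\ref{eq-shift-Omega}) (a compactness argument gives some $\delta(\ep)>0$ with $(1-\ep)\Omega+(\delta,\ldots,\delta)\subset\mathrm{int}(\Omega)$), which is harmless because fiber translations are symplectomorphisms of $T^*\T^n$.
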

\begin{proof}[Proof of Corollary \ref{cor-capacity-BM}]
     By Proposition \ref{prop-toric} and the monotonicity property of symplectic capacity, for any $1>\ep>0$   we have
    \[c((1-\ep)X_\Omega)\leq c(\T^n \times \mathrm{int}(\Omega))\leq c(X_\Omega).\]
By letting $\ep\to0$ we conclude that $c(\T^n \times \mathrm{int}(\Omega))=c(X_\Omega). $ Moreover, for any $\Omega\subset \R^n_{\geq 0}$, $\mathrm{int}(\Omega)\subset \Omega\subset (1+\ep)\mathrm{int}(\Omega)+v$ where $v$ is a small translation vector   with $\|v\|\ll \ep$, chosen so that  $(1+\ep)\mathrm{int}(\Omega)+v$ contains the origin. Applying monotonicity and conformality of capacities   and letting $\ep\to0$ (so that $v\to0\in \R^n$), it follows that  $c(\T^n \times \mathrm{int}(\Omega))=c(\T^n\times \Omega).$

 For the second equality in (\ref{eq-cap-CBM}), given   monotone toric domains $X_\Omega,X_{\Omega'}$, suppose there exists constant $C\geq 1$ and symplectic embeddings such that
\begin{equation}\label{eq-emb-CBM}
     (X_\Omega,\omega_{\rm std})\hookrightarrow (C\cdot X_{\Omega'},\omega_{\rm std}),\quad (X_{\Omega'},\omega_{\rm std})\hookrightarrow (C\cdot X_\Omega,\omega_{\rm std}) .
\end{equation} 
Then by Proposition  \ref{prop-toric}, there exist symplectic embeddings:
\begin{equation*}
    \begin{split}
       & (\T^n\times \mathrm{int}(\Omega),\omega_{\rm can})\hookrightarrow (X_\Omega,\omega_{\rm std})\hookrightarrow  (C\cdot X_{\Omega'},\omega_{\rm std})\hookrightarrow\left(\frac{C}{(1-\ep)}\cdot \left(\T^n\times \mathrm{int}(\Omega')\right),\omega_{\rm can}\right),\\
     &  (\T^n\times \mathrm{int}(\Omega'),\omega_{\rm can})\hookrightarrow (X_{\Omega'},\omega_{\rm std})\hookrightarrow  (C\cdot X_{\Omega },\omega_{\rm std})\hookrightarrow\left(\frac{C}{(1-\ep)}\cdot \left(\T^n\times \mathrm{int}(\Omega)\right),\omega_{\rm can}\right).
    \end{split}
\end{equation*}
By definition of $d_{\rm BM}$ in (\ref{eq-CBM}),  this implies
 \[d_{\rm BM}(\T^n\times \mathrm{int}(\Omega),\T^n\times \mathrm{int}(\Omega'))\leq \ln \left(\frac{C}{1-\ep}\right).\]
By letting $\ep\to 0$ we have
 \[d_{\rm BM}(\T^n\times \mathrm{int}(\Omega),\T^n\times \mathrm{int}(\Omega'))\leq \ln \left(C\right).\]
Since  $C$ was arbitrary among constants admitting embeddings as in~\eqref{eq-emb-CBM},  by definition of $d_{\rm BM}$, we have 
\begin{equation}\label{eq-dbm-up-bound}
    d_{\rm BM}(\T^n\times \mathrm{int}(\Omega),\T^n\times \mathrm{int}(\Omega'))\leq d_{\rm BM}(X_\Omega,X_{\Omega'}).
\end{equation} 
By a symmetric argument using Proposition \ref{prop-toric}, we obtain the other direction of (\ref{eq-dbm-up-bound}):
\begin{equation}\label{eq-CBM-toric-product}
    d_{\rm BM}(\T^n\times \mathrm{int}(\Omega),\T^n\times \mathrm{int}(\Omega'))= d_{\rm BM}(X_\Omega,X_{\Omega'}).
\end{equation} 
  By an argument analogous to the proof of the first equality in (\ref{eq-cap-CBM}), based on the inclusions  $\mathrm{int}(\Omega)\subset \Omega\subset (1+\ep)\mathrm{int}(\Omega)+v$,  we obtain
  \[d_{\rm BM}(\T^n\times \mathrm{int}(\Omega),\T^n\times \mathrm{int}(\Omega'))= d_{\rm BM}(\T^n\times  \Omega ,\T^n\times  \Omega' ).\]
  Together with (\ref{eq-CBM-toric-product}), we  prove the second equality in (\ref{eq-cap-CBM}).
\end{proof}

By applying Proposition \ref{prop-toric} and Corollary \ref{cor-capacity-BM} to $d_{\rm BM}$, we can prove Theorem \ref{thm-large-scale-one} as follows.
 \begin{proof}[Proof of Theorem \ref{thm-large-scale-one}]
   Given any $N \in \N_{>0}$,  by  Theorem 1.2  in \cite{DR23}, there exists a quasi-isometric embedding from Euclidean space $(\R^N,|\cdot|_\infty)$ to the set  of concave toric domains in $\R^4.$ For any $v\in \R^N$, denote the corresponding concave toric domain by $X_{\Omega_v}\subset (\R^4,\omega_{\rm std})$. By definition it is also a monotone toric domain.  Then consider the product domain $\mathbb{T}^2\times \Omega_v \in \mathcal{T}_{T^*\T^2}$.  By  Theorem 1.2  in \cite{DR23},  there exist some uniform constants $C,D>0$  such that
      for any $v,w\in \R^N$, the  coarse Banach-Mazur distance $d_{\rm BM}$ satisfies:
     \[  d_{\rm BM}(X_{\Omega_v},X_{\Omega_w})  \geq \frac{1}{C}\cdot|v-w|_{\infty}-D. \]
Then applying the second conclusion in   Corollary \ref{cor-capacity-BM},    
     \[d_{\rm BM}(\mathbb{T}^2\times  \Omega_v , \mathbb{T}^2\times  \Omega_w )= d_{\rm BM}(X_{\Omega_v},X_{\Omega_w})  \geq \frac{1}{C}\cdot|v-w|_{\infty}-D.\]
       Define $d_{I}$ as the {\it inclusion distance} between star-shaped domains in $\R^{2}$ (more generally, in $\R^n$ for later use)   by
     \begin{equation}\label{eq-dI}
        d_{I} (\Omega_v ,\Omega_w )  \coloneqq \inf \{\ln C \mid\Omega_v \to C \cdot \Omega_w, \Omega_w\to C \cdot \Omega_v\} 
     \end{equation}
     where $\to$ refers to inclusion and  the scaled domain
     \[C\cdot\Omega_v\coloneqq \{C\cdot x \mid x\in \Omega_v\}\]
     is obtained by uniform radial dilation from the origin.
     The upper bound comes from
     \[d_{\rm BM}(\mathbb{T}^2\times \Omega_v , \mathbb{T}^2\times  \Omega_w )\leq d_{I}( \Omega_v , \Omega_w ) \leq C\cdot|v-w|_{\infty}+D\] with the last inequality from  the defining property of $X_{\Omega_v},X_{\Omega_w}$ as shown in \cite{DR23}. In conclusion, the map
     \[v\in (\R^N,|\cdot|_{\infty})\to \T^2\times \Omega_v\in (\mathcal{T}_{T^*\T^2},d_{\rm BM})\] 
     gives our desired quasi-isometric embedding.
\end{proof}
 
Recall the shape invariant for product domains $\mathcal{T}_{T^*\T^n}$, introduced in \cite{Yak91,RZ21}, is defined as follows:
\begin{dfn}[see Section 2.2 in \cite{RZ21}]\label{dfn-shape}
  For any $n\geq 2$  and any domain $A\in \R^n$, the shape  invariant of $(\T^n\times A,\omega_{\rm can}=d\lambda_{\rm can})$ is defined as
    \begin{equation}\label{eq-dfn-shape}
        \operatorname{Sh}(\T^n\times A) :=
\left\{
a \in H^1(\T^n ; \mathbb{R})
\ \middle|\ 
\begin{array}{l}
\text{there exists a Lagrangian embedding } 
 \\[4pt]
f \colon \T^n  \to\T^n\times A,\text{ such that }  \\[4pt]
f^*|_{  H^1 (\T^n; \mathbb{R})} = \mathrm{id},\text{ and   } f^*([\lambda_{\rm can}]  ) =a
\end{array}
\right\}.
    \end{equation}
\end{dfn}
If we choose a basis $\{e_1,\ldots,e_n\} $ of $H^1(\T^n;\R)\simeq \R^n$ so that $a\in H^1(\T^n;\R)$ can be identified as a vector in $\R^n$.  Then the shape invariant defined above admits the following property:
\begin{lemma}[Theorem 2.4.1 in \cite{Yak91}]\label{lemma-shape}
    Let  $A\subset \R^n$ be a connected open subset and    $\T^n\times A\subset T^*\T^n$. Then $\operatorname{Sh}(\T^n\times A)=A.$
\end{lemma}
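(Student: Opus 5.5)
The plan is to prove the two inclusions $A\subset \operatorname{Sh}(\T^n\times A)$ and $\operatorname{Sh}(\T^n\times A)\subset A$ separately. The first is a direct construction. The second reduces, after a fiberwise translation, to an intersection statement for exact Lagrangian tori in $T^*\T^n$.

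For $A\subset \operatorname{Sh}(\T^n\times A)$, fix $a=(a_1,\ldots,a_n)\in A$ and take the constant section $f_a\colon \T^n\to \T^n\times A$, $f_a(q)=(q,a)$. Its image $\T^n\times\{a\}$ is Lagrangian, since $\omega_{\rm can}=\sum dp_i\wedge dq_i$ vanishes when $p$ is constant. The map $f_a$ is a section of the projection, so $f_a^*$ is the identity on $H^1(\T^n;\R)$. Moreover $f_a^*\lambda_{\rm can}=\sum_i a_i\,dq_i$, whose class is $a$ in the basis $\{[dq_i]\}$. Hence $a\in\operatorname{Sh}(\T^n\times A)$.

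For the reverse inclusion, let $f\colon\T^n\to\T^n\times A$ be a Lagrangian embedding with $f^*|_{H^1}=\mathrm{id}$ and $f^*[\lambda_{\rm can}]=a$; we want $a\in A$. Apply the fiberwise translation $\tau_a(q,p)=(q,p-a)$. This is a symplectomorphism of $T^*\T^n$ with $\tau_a^*\lambda_{\rm can}=\lambda_{\rm can}-\sum a_i\,dq_i$, and it maps $\T^n\times A$ onto $\T^n\times(A-a)$. Set $f'=\tau_a\circ f$. Using $f^*=\mathrm{id}$ on $H^1$, we get $[f'^*\lambda_{\rm can}]=a-f^*[\sum a_i dq_i]=a-a=0$. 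So $L'=f'(\T^n)$ is an exact Lagrangian torus in $T^*\T^n$ contained in $\T^n\times(A-a)$.

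The key input is the rigidity statement that a closed exact Lagrangian submanifold of $T^*\T^n$ must intersect the zero section. One can cite Gromov's theorem on exact Lagrangians in cotangent bundles, or, more specifically for tori homotopic to the zero section, Sikorav's Theorem 3 in \cite{Sik89}, of which \cite{Yak91} is a reformulation. Granting it, $L'$ meets $\T^n\times\{0\}$, so $0\in A-a$, i.e.\ $a\in A$. This step is the main obstacle: it is genuinely hard (holomorphic curves or generating functions) and cannot be replaced by a soft argument. In particular, since $A$ need not be convex, one cannot simply separate $a$ from $A$ by a hyperplane and use an action or projection estimate. I would therefore invoke the cited intersection theorem directly rather than reprove it. Openness and connectedness of $A$ are used only so that $\T^n\times A$ is an open subset of $T^*\T^n$ where these results apply.
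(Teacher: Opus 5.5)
Your proposal is correct. The paper gives no argument of its own here: it simply quotes Theorem 2.4.1 of \cite{Yak91}, a reformulation of Theorem 3 of \cite{Sik89}. What you have written unpacks that citation along the standard lines.
\begin{itemize}
\item Constant sections $\T^n\times\{a\}$ give $A\subset\operatorname{Sh}(\T^n\times A)$.
\item For the reverse inclusion, fiberwise translation by $a$ turns the torus into an exact Lagrangian in $\T^n\times(A-a)$, which must meet the zero section.
\end{itemize}

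Of the two results you propose to invoke for this last step, only Gromov's is an independent input. Since the lemma is a reformulation of Sikorav's Theorem 3, citing the latter amounts to citing the lemma itself.

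The Gromov fact is also easy to derive from nondisplaceability of closed exact Lagrangians. Suppose $L'$ missed the zero section, so that $\min_{L'}|p|>0$. The fiberwise scalings $(q,p)\mapsto(q,cp)$, $c\in[1,C]$, pull $\lambda_{\rm can}$ back to $c\,\lambda_{\rm can}$, so they move $L'$ through exact Lagrangian tori. Such an isotopy has zero flux, so it is generated by a compactly supported Hamiltonian isotopy. For $C$ large it displaces $L'$ from itself, which Gromov's theorem forbids.

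What your route buys over the bare citation is transparency about the hypotheses. The condition $f^*|_{H^1(\T^n;\R)}=\mathrm{id}$ enters only through $f^*[\sum_i a_i\,dq_i]=a$. This also forces the basis of $H^1(\T^n;\R)$ to be $\{[dq_i]\}$ for $\operatorname{Sh}=A$ to hold literally. Connectedness of $A$ is never used.
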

\begin{proof}[Proof of Theorem \ref{thm-large-scale-two}]
    Given any  open domains
    $A,B\subset  \R^n$,  denote $d_{\rm HBM}(\T^n \times A,\T^n\times B)=:\ln M\geq 0$. Then by Definition \ref{dfn-HBM}, for any $\ep>0$, there exist  exact $H^1$-trivial  embeddings:
    \[\varphi\colon \T^n \times A\xhookrightarrow{H^1-\text{trivial}  }\T^n\times ((M+\ep)\cdot B),\quad \psi\colon\T^n\times B \xhookrightarrow{H^1-\text{trivial}} \T^n\times((M+\ep)\cdot A).\]
For any $a\in \mathrm{Sh}(\T^n\times A)$, by (\ref{eq-dfn-shape}) there exists  a Lagrangian embedding $f$ with $f^*([\lambda_{\rm can}])=a$. Then the composition
\begin{equation}\label{eq-Lag-emb}
    \varphi\circ f\colon \T^n\to \T^n\times ((M+\ep)\cdot B)
\end{equation} 
  gives a Lagrangian embedding of $\T^n$, and
  \[(\varphi\circ f)^*([\lambda_{\rm can}])= f^* ( \varphi^*([\lambda_{\rm can}]))=f^*([\lambda_{\rm can}])=a\]
  where the second equality is due to the  exactness of $\varphi$. Moreover, $(\varphi\circ f)^*|_{H^1(\T^n;\R)}=\mathrm{id}$ since $\varphi$ is $H^1$-trivial. Therefore, by definition  
 we have  $a\in \mathrm{Sh}(\T^n\times ((M+\ep)\cdot B))$. By Lemma \ref{lemma-shape}, this implies that 
 $A\subset  (M+\ep)\cdot B .$
By a symmetric argument we have $B\subset  (M+\ep)\cdot A .$  In other words, by definition of the inclusion distance  in (\ref{eq-dI}), 
\[d_I(A,B)\leq \ln (M+\ep) \]
where $d_I$ is the inclusion distance defined in (\ref{eq-dI}). By letting $\ep\to0$ we have
  \begin{equation}\label{eq-HBM-inc-1}
        d_I(A,B)\leq \ln M= d_{\rm HBM}(\T^n \times A,\T^n\times B).
  \end{equation} 
  
On the other hand, any inclusion between constant  multiples of $A$ and $B$ can be regarded as inclusion between constant multiple of $\T^n\times A$ and $\T^n\times B$, therefore by definition of $d_{\rm HBM}$, we have the following estimation:
\begin{equation}\label{eq-HBM-inc-2}
      d_{\rm HBM}(\T^n \times A,\T^n\times B)\leq d_I(A,B).
\end{equation}
 Together   we conclude that $ d_{\rm HBM}(\T^n \times A,\T^n\times B)= d_I(A,B)$. Note that $A,B\subset \R^n$  can be chosen freely. Through a similar argument of Theorem A in  \cite{RZ21}, we prove (i) in Theorem \ref{thm-large-scale-two} as follows.

 \medskip
 
Since $n\ge 2$, we may choose two linearly independent cohomology classes in
$H^1(\T^n;\R)$, which spans a $2$-dimensional subspace, identified with $\C$. Within this subspace, for any fixed integer $N\geq 2$, consider $N$-pairwise non-collinear directions given by $\{e^{\sqrt{-1}\frac{\pi(j-1)}{N}}\}_{j=1}^N$. Then, given any vector $v=(v_1,\ldots,v_N)\in \R^N$, define
$2N$ points as follows, 
\[
p_j \coloneqq e^{v_j} e^{\sqrt{-1}\frac{\pi(j-1)}{N}} \in \C \,\,\,\,\mbox{and}\,\,\,\,
p_{j+N} \coloneqq -p_j\in \C,
 \,\,\,\,\mbox{for $j=1,\ldots,N$}.
\]
Let $P_v\subset \C$ be the polygon whose vertices are
$p_1,p_2,\ldots,p_{2N}$ in this cyclic order, with edges given by the line
segments $[p_j,p_{j+1}]$ (with indices taken modulo  $2N$). Then $P_v$ is a  star-shaped domain in $\R^2 $ (see an illustrative picture in Figure \ref{fig:piecewise-linear-domain} where we take $N=4$). 
\begin{figure}[h]
    \centering
    \includegraphics[width=0.7\linewidth]{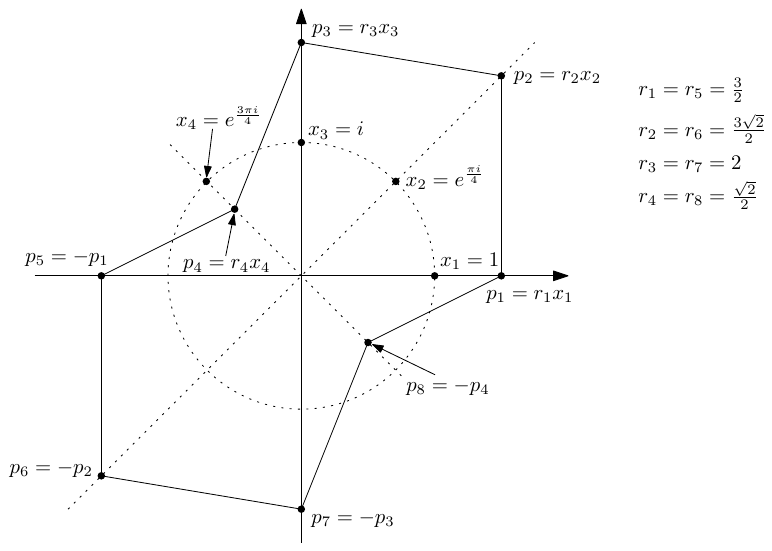}
    \caption{Star-shaped domain $P_v$.}
    \label{fig:piecewise-linear-domain}
\end{figure}
Moreover, for the complementary $(n-2)$-dimensional Euclidean subspace 
$\mathbb{R}^{n-2} \subset H^1(\mathbb{T}^n; \mathbb{R})$, 
consider the   closed higher-dimensional cube $[0,1]^{n-2} \subset \mathbb{R}^{n-2} $  and   define $A_v\coloneqq P_v \times[0,1]^{n-2} \subset \mathbb{R}^{n}$ to be the product domain, which is also star-shaped in $\R^{n}$ (after smoothing the corner). In this way, we associate to each $v \in \R^N$ a fiberwise star-shaped domain $\T^n \times A_v \subset T^*\T^n$. 

By the discussion above, we have 
 \begin{equation*}
     \begin{split}
        d_{\rm HBM}(\T^n \times A_v,\T^n\times A_w) & =d_{I}(A_v,A_w)\\&= \inf \{\ln C \mid e^{v_i}\leq C\cdot e^{w_i},  e^{w_i}\leq  C\cdot e^{v_i} \text{ for any $1\leq i\leq N $}  \}  \\
        & =\left|v-w\right|_\infty
     \end{split}
 \end{equation*}
 for any $v,w\in \R^N$. Therefore, such an association serves as the requested isometric embedding. 

 \medskip
 
Next, let us prove (ii) of Theorem \ref{thm-large-scale-two}. Given an embedding 
 $X_{\mathrm{int}(\Omega)}\xhookrightarrow{H^1-\text{trivial}}C\cdot X_{\mathrm{int}(\Omega')} $ 
 for some constant $C>0$,  by Definition \ref{dfn-HBM-toric} and symplectomorphism (\ref{eq-symplectomorphism}), it gives rise to an exact $H^1$-trivial embedding
 \[(X_{\mathrm{int}(\Omega)},\omega_{\rm std})\simeq (\T^n\times \mathrm{int}(\Omega),\omega_{\rm can})\xhookrightarrow{H^1-\text{trivial}} (\T^n\times \mathrm{int}(C\cdot \Omega'),\omega_{\rm can})\simeq (C\cdot X_{\mathrm{int}(\Omega')},\omega_{\rm std}).\]
 Therefore for any small $\ep>0$,  we have
 \begin{equation}\label{eq-comp}
     \T^n\times  (1-\ep)\Omega   \to  (\T^n\times \mathrm{int}(\Omega),\omega_{\rm can})\xhookrightarrow{H^1-\text{trivial}} (\T^n\times \mathrm{int}(C\cdot \Omega'),\omega_{\rm can})\to \T^n\times  C\cdot \Omega' 
 \end{equation}
 where $\to$ refers to inclusion. By composing the  maps in (\ref{eq-comp}) above, we derive an exact  $H^1$-trivial embedding from $(\T^n\times  (1-\ep)\Omega,\omega_{\rm can})$ to $ (\T^n\times  C\cdot \Omega',\omega_{\rm can})$. By a symmetric argument and let $\ep\to 0$  we have:
 \begin{equation}\label{eq_HBM_d_I}
     d_{\rm HBM}(X_{\Omega},X_{\Omega'})= d_{\rm HBM}(\T^n \times \Omega,\T^n\times{\Omega'})=d_I(\Omega,\Omega') 
 \end{equation} 
 where the second equality is due to (\ref{eq-HBM-inc-1}) and (\ref{eq-HBM-inc-2}). 

 Then one can consider a similar construction as in the proof of (i) above, with the only difference that the fiber now is in $\R^n_{\geq0}$ rather than $\R^n.$ Explicitly, for any $v=(v_1,\ldots,v_N)\in\R^{N}_{\geq 0}$, consider domain $\Omega_v\subset\R_{\geq 0} ^{n}$ in the form of $Q_v \times [0,1]^{n-2}$, where $Q_v$ is a 2-dimensional polygon with vertices encoding the rescaling of components $v_i$ in the given vector $v$. Then $v \mapsto X_{\Omega_v}$ gives the desired isometric embedding.
\end{proof}
\begin{remark}\label{rmk-disagree-distance}
   Note that  in Definition \ref{dfn-HBM} and Definition \ref{dfn-HBM-toric} of $d_{\rm HBM}$, compared with $d_{\rm BM}$, we put extra constraints on the symplectic embeddings between domains, so $d_{\rm HBM}\geq d_{\rm BM}$. In certain cases the inequality could be strict. Consider the following example shown in Proposition 2.8  of  \cite{DR23}.  Let $X_s= B^4(1)\cup E(\frac{1}{s},s)$  where $E(\frac{1}{s},s)$ is a symplectic ellipsoid in $(\R^4,\omega_{\rm std})$ with $s>1.$
    Then we have 
    \[d_{\rm BM}(X_s,X_t)\leq \ln2,\text{ but } \,d_{\rm HBM}(X_s,X_t)= d_{ I}(X_s,X_t)\geq \left|\ln\left(\frac{s}{t}\right) \right|.  \]
    By fixing $t$ and letting $s\to \infty$,  $d_{\rm BM}(X_s,X_t) \leq \ln 2$ while   $d_{\rm HBM }(X_s,X_t)\to \infty$.  
\end{remark}
%As noted in  Remark \ref{rmk-disagree-distance} above, the distance $d_{\rm HBM}$ does not in general coincide with the coarse Banach–Mazur distance $d_{\rm BM}.$ This naturally raises the question:
%\begin{question} 
%  Under what additional assumptions do these two distances agree? An affirmative answer would allow Theorem~\ref{thm-large-scale-two} to be applied directly to  $d_{\rm BM}$. 
%\end{question}

 \subsection{Proof of Theorem \ref{thm-normalized} and Theorem \ref{thm-ball-normalized}}\label{section-normalized}
 First   recall that for any $v=(v_1,\ldots,v_n)\in \R^n$, the $\ell^p$-norm on $\mathbb{R}^n$ is defined by
\begin{equation}\label{eq-l-p-norm}
    \|v\|_p := 
\begin{cases}
\left( \displaystyle\sum_{i=1}^n |v_i|^p \right)^{1/p}, & \text{if } 1 \le p < \infty, \\[1em]
\max\limits_{1 \le i \le n} |v_i|, & \text{if } p = \infty.
\end{cases}
\end{equation}
With respect to this norm, the $\ell^p$-ball of radius $r>0$ in $\mathbb{R}^n$ is denoted as follows, 
\begin{equation}\label{eq-p-ball}
   B_p(r) \coloneqq \{  v = (v_1,\dots,v_n) \in \mathbb{R}^n \mid \|v\|_p \le r  \}. 
\end{equation}
In this section, we will consider a Finsler metric $F$ on $T \T^n\simeq \T^n\times \R^n$ defined by $\ell^p$-norm  $F(q,v):=\|v \|_p$, which is flat and reversible.
Then we can prove the following proposition:
\begin{prop}\label{prop-ell-norm}
     Let $F$ be a Finsler metric on $\mathbb{T}^2$ induced by an $\ell^p$-norm with $p \in [1,\infty]$. 
Then for any ball-normalized symplectic capacity $c$, we have $c\big(D_F^*\mathbb{T}^2\big) = 2.$ 
\end{prop}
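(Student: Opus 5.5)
The plan is to bound $c(D_F^*\T^2)$ above and below by $2$. We write $D_F^*\T^2=\T^2\times B_{p^*}(1)$, where $p^*$ is the conjugate exponent, $\frac1p+\frac1{p^*}=1$. This holds because the dual of the $\ell^p$-norm is the $\ell^{p^*}$-norm, so the fiber is the $\ell^{p^*}$-unit ball. Since $1\le p^*\le\infty$, we have the inclusions
\[
B_1(1)\subset B_{p^*}(1)\subset B_\infty(1).
\]
By monotonicity of capacities, it therefore suffices to prove two things: $c(\T^2\times B_\infty(1))\le 2$ and $c(\T^2\times B_1(1))\ge 2$.

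\emph{Upper bound.} We have $\T^2\times B_\infty(1)=\bigl(\R/\Z\times(-1,1)\bigr)_{q_1,p_1}\times\bigl(\R/\Z\times(-1,1)\bigr)_{q_2,p_2}$ up to boundary. The first factor is an annulus of area $2$. For every $\delta>0$ it embeds area-preservingly into the disc $B^2$ of area $2+\delta$. The second factor is contained in the infinite-area cylinder $\R/\Z\times\R$, which embeds symplectically into $\R^2$ (for instance onto a punctured plane via polar-type coordinates as in (\ref{eq-emb-one})). The product of these two embeddings sends $\T^2\times B_\infty(1)$ into the cylinder $Z^4$ of area $2+\delta$. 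The ball-normalization then gives $c\le 2+\delta$, and letting $\delta\to0$ yields $c\le 2$. This upper bound is consistent with $2\,\sys(\partial D_F^*\T^2)=2$: the fiber point $(1,0)$ has normal $(1,0)$, so by (\ref{eq-action}) it carries an orbit of action $1$.

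\emph{Lower bound.} The open unit square $(0,1)^2$ injects into $\T^2=\R^2/\Z^2$. Hence the Lagrangian product $(0,1)^2\times B_1(1)\subset(T^*\R^2,\omega_{\rm can})\simeq(\R^4,\omega_{\rm std})$ embeds symplectically into $\T^2\times B_1(1)$ via $(q,p)\mapsto([q],p)$. The lift of translations $q\mapsto q+a$ acts trivially on $p$, so up to translation this product is $B_\infty(\tfrac12)\times B_1(1)$. Consequently $c(\T^2\times B_1(1))\ge c_{\rm Gr}\bigl(B_\infty(\tfrac12)\times B_1(1)\bigr)$. We would then invoke the known description of the 4-dimensional Lagrangian product of the cube and the cross-polytope. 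It is symplectomorphic (up to boundary) to a toric domain whose moment image is a triangle $\{x+y\le 4r_1r_2\}$ with $r_1=\frac12$ and $r_2=1$, i.e. a ball of area $2$. Alternatively, one can show directly that it contains a ball of area $2-\delta$. Either way this gives $c\ge 2$, and combined with the upper bound it proves $c(D_F^*\T^2)=2$.

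\emph{Main obstacle.} The hard step is the lower bound, specifically producing an explicit symplectic ball of area close to $2$ inside $B_\infty(\tfrac12)\times B_1(1)$ or inside $\T^2\times B_1(1)$. My first attempt would be a Traynor-type construction. Use the symmetry $p_2\mapsto -p_2$ of the diamond (this is condition (i) of Theorem \ref{thm-normalized}) to cut $B_1(1)$ into two triangles $\{\pm p_2\ge 0\}$. Translate each triangle fiberwise into $\R^2_{\ge 0}$ so that it becomes the moment image of a monotone toric domain. Then use the slit-disc maps $\sigma$ from Proposition \ref{prop-toric} to glue the two half-pieces into a single ball of area $2-\delta$. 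If this gluing fails, I would fall back on citing the cube/cross-polytope symplectomorphism from the literature, which Ben24 already uses for $p\in[1,2]$.
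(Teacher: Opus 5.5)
Your argument is correct, modulo two points noted in the second paragraph. For the lower bound it takes a genuinely different route from the paper. The identification $D_F^*\T^2=\T^2\times B_{p^*}(1)$, the reduction via $B_1(1)\subset B_{p^*}(1)\subset B_\infty(1)$, and the upper bound coincide with the paper's. There, the shift by $(1,1)$ followed by $\Phi$ from (\ref{eq-emb-one}) puts $\T^2\times\mathrm{int}(B_\infty(1))$ into the cylinder of area $2$, which is your annulus-into-disc argument.

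For the lower bound, the paper proceeds in three steps:
\begin{itemize}
\item it shifts the diamond to $Q=B_1(1)+(1,1)$ and identifies $\T^2\times\mathrm{int}(Q)$ with the generalized convex toric domain $X_{\mathrm{int}(Q)}$;
\item it reads off the weights $(3;1,1,1,1,1)$;
\item it uses McDuff's criterion \cite{Mcd25} to reduce the existence of a ball of area $2$ to the full packing of the ball of area $3$ by five balls of area $1$ and one of area $2$ \cite{Sch17}.
\end{itemize}

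You instead cut $\T^2$ open along a fundamental square, which embeds the Lagrangian product $B_\infty(\tfrac12)\times B_1(1)$ into $\T^2\times B_1(1)$. You then invoke the theorem (Ramos--Sepe) that the interior of the Lagrangian product of a square and its polar diamond in $\R^4$ is symplectomorphic to an open ball. After scaling, that ball has area $4\cdot\tfrac12\cdot 1=2$. This is precisely the inequality from \cite{Ben24} quoted in the proof of Proposition \ref{prop-counterexample}, applied to $A=B_1(1)$ with the input $c_{\rm Gr}(B_1(1)\times B_\infty(1))=4$.

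Your route is shorter and explains the value directly: the Lagrangian product and $\T^2\times B_1(1)$ both have volume $2$, which is the volume of a ball of area $2$. It also applies verbatim to any centrally symmetric $A$ for which $c_{\rm Gr}(A\times A^*)=4$ is known. However, it rests on a nontrivial external theorem tailored to the square/diamond pair. The paper's argument stays within its own toric and ECH framework and needs no central symmetry. It is the same machinery the paper reuses for condition (ii) of Theorem \ref{thm-normalized} and for the non-symmetric triangle in Example \ref{ex-cap-non-coin}.

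Two corrections. First, $\R/\Z\times\R$ does not embed symplectically into $\R^2$. The image of $\R/\Z\times\{0\}$ would bound a disc of finite area, and that disc would contain the image of one of the two half-cylinders, each of which has infinite area. Likewise, $\Phi$ maps $\R/\Z\times(0,\infty)$, not $\R/\Z\times\R$, onto the punctured plane. You only need the factor $\R/\Z\times(-1,1)$, which embeds via $p_2\mapsto p_2+1$ followed by $\Phi$, so your upper bound survives.

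Second, the Traynor-type construction you propose as a first attempt does not work as described. Each half $B_1(1)\cap\{\pm p_2\ge 0\}$ has an edge with outward normal $(-1,\pm 1)$, so no translation turns it into the moment image of a monotone toric domain. After a fiberwise $GL(2,\Z)$ shear and a translation, each half becomes the triangle with vertices $(0,0),(2,0),(0,1)$, which models an ellipsoid of Gromov width $1$. The slit maps $\sigma$ of Proposition \ref{prop-toric} only push a single toric domain off the coordinate axes; they do not merge two such pieces into one ball of area close to $2$. So the lower bound really needs either the Lagrangian-product theorem or a genuinely four-dimensional packing argument such as the paper's.
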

Before proving the proposition,  we introduce a more general family of  toric domains in $\R^{2n}$, imitating the definition at the beginning of  Section 1 of \cite{Mcd25}.
      A {\it generalized toric domain} is a toric domain whose moment image $\Omega\subset \R_{\geq 0}^n$ is not required to contain the origin.  A generalized toric domain $X_\Omega$ is called  {\it generalized convex} if $\Omega\subset \R_{\geq 0}^n$ is geometrically convex and compact.  For open toric domain  $X_{\Omega}$, it is  called generalized convex if the closure $\overline{\Omega}$ is convex and compact.
      
Note that the map $\Phi$   in (\ref{eq-emb-one})  establishes a symplectomorphism   between $(\T^n\times \mathrm{int}(\Omega),\omega_{\rm can})$ and $(X_{\mathrm{int}(\Omega)},\omega_{\rm std})$ for any  generalized toric domain $X_\Omega$.  Combined with the embedding criterion on concave-to-convex toric domains established in  \cite{Mcd25}, we argue as follows:

\begin{proof}[Proof of  Proposition  \ref{prop-ell-norm}]\footnote{In this proof, the proof of condition (ii) in Theorem \ref{thm-normalized}, and  Example \ref{ex-cap-non-coin} below, we always denote $B^4(r)$ as an open ball in $(\R^4,\omega_{\rm std})$ with radius $r>0$. Note that in terms of calculation of capacity, taking the closure of an open ball does not make a difference.}
    For any codsic bundle $(D_{F }^* \mathbb{T}^2,\omega_{\rm can})$  where  $F $ is induced by an  $\ell^p$-norm with $p\in [1,\infty]$, it can be identified with product domain $\mathbb{T}^2\times B_q(1)\subset (T^*\T^2,\omega_{\rm can})$ for $\ell^q$-ball $B_q(1)\subset \R^{2}$ (defined in (\ref{eq-p-ball})) where  $(p,q)$ satisfies $\frac{1}{p}+\frac{1}{q}=1. $
     Observe that  $B_1(1)\subset  B_q (1)\subset B_\infty(1)$ for any $q\in [1,\infty]$. 
     Then by monotonicity of capacity, it suffices prove the following conclusion:
     \[c (\T^2\times B_1(1)) =c (\T^2\times B_\infty(1))=2  \]
     for any ball-normalized capacity  $c$.
   By definition, domain $B_{1}(1)$ is a square in $\R^2$  with vertices
     $(1,0),  (-1,0), (0, 1),  (0,- 1)$. 
     Then  we shift $B_{1}(1)$ by a constant vector $( 1,1)$  so that $B_{1}(1)+(1,1)$ stays in $\R_{\geq 0}^2$,   denoted by $Q.$  The shifting induces a  symplectomorphism between $(\T^2\times B_1(1),\omega_{\rm can})$ and $(\T^2\times Q,\omega_{\rm can})$ so that $c(\T^2\times B_1(1))=c(\T^2\times Q).$
  By considering  the map $\Phi$ in (\ref{eq-emb-one}),  there exists a symplectomorphism  $(\T^2\times {\mathrm{int}(Q)},\omega_{\rm can}) \simeq  (X_{\mathrm{int}(Q)},\omega_{\rm std})$
 and $X_{\mathrm{int}(Q)}\subset \R^4$ is a generalized convex toric domain since $Q\subset \R^2_{\geq 0}$ is convex.
  
 Then we consider the largest symplectic ball embedded into the generalized convex toric domain $X_{ \mathrm{int}(Q)}$. By Section 2.1 in \cite{Mcd25}, the weight decomposition of $Q\subset \R_{\geq 0}^2$ is  given by $(3;1,1,1,1,1)$.
 Then by Theorem  1.2.1 in \cite{Mcd25}, a ball $(B^4(a),\omega_{\rm std})$ symplectically embeds into the generalized  convex toric domain  $X_{\mathrm{int}(Q)}$ if and only if there exists an symplectic embedding:
 \[  \left(\bigsqcup_{i=1}^5 B_i^4\left(\sqrt{\frac{1 }{\pi}}\right)\bigsqcup B^4(a),\omega_{\rm std}\right)  \hookrightarrow \left(B^4\left(\sqrt{\frac{3  }{\pi}}\right),\omega_{\rm std}\right).\]
 By Proposition 9.5 (i) in \cite{Sch17} on the very full packing of $6$-balls, such an embedding exists if and only if  $a= \sqrt{\frac{2  }{\pi}}$. 
In particular, there exists a symplectic embedding
 \[\left(B^4\left( \sqrt{\frac{2  }{\pi}}\right),\omega_{\rm std}\right)\hookrightarrow (X_{\mathrm{int}(Q)},\omega_{\rm std})\simeq  (\T^2\times {\mathrm{int}(Q)},\omega_{\rm can})\subset  (\T^2\times Q,\omega_{\rm can}).  \]
 Therefore, by definition and the embedding above, any ball-normalized capacity $c$ satisfies
  \begin{equation}\label{eq-emb}
      c(\T^2\times B_{1}(1))=c(\T^2\times Q)\geq c\left(B^4\left( \sqrt{\frac{2   }{\pi}}\right)\right)=2  .
 \end{equation}
 
 For the upper bound,  note that  
  $B_\infty(1)=   [-1,1]\times [-1,1] $. By considering the symplectic embedding $\Phi$ in (\ref{eq-emb-one}) and the shifting  vector $v=(1,1)\in \R^2$, we have
 \begin{equation}\label{eq-inclusion-cylinder}
     \Phi(\T^2\times (\mathrm{int}(B_\infty(1)) +v) )\subset Z^4\left(  \sqrt{\frac{2  }{\pi}} \right).
 \end{equation} 
  For any $\ep>0$, we have $\T^2\times  B_\infty(1) \subset \T^2\times \mathrm{int}( B_\infty(1+\ep))$. Then by (\ref{eq-inclusion-cylinder}) we have
 \[c (\T^2\times  B_\infty(1))\leq c (\T^2\times  \mathrm{int}(B_\infty(1+\ep)))\leq  c\left(Z^4\left(  \sqrt{\frac{2 +2\ep }{\pi}} \right)\right)=2+2\ep.\]
 Together with (\ref{eq-emb}), we conclude that
 \[2  \leq c(\T^2\times B_{1}(1))\leq c (\T^2\times  B_\infty(1))\leq 2+2\ep\]
 for any $\ep>0.$ By letting $\ep\to 0,$ we  complete the proof.
\end{proof}
Based on the proposition above, we prove Theorem \ref{thm-normalized} as follows.
 \begin{proof}[Proof of Theorem \ref{thm-normalized}, under condition (i)] Let us identify $D^*_F\T^2$ with $\T^2 \times A$ for some domain $A \subset \R^2$. Let  $ (a,0)\in \partial A $  with $a>0$ denote the intersection point of
$\partial A$ with the positive $x$-axis. By the condition (i), the domain
$A\subset \R^2$ is symmetric with respect to the $x$-axis, so is the boundary
$\partial A$. Therefore, the tangent line of $\partial A$ at $(a,0)$ is vertical, i.e., $x=a$.

By the same symmetry argument, the tangent line to $\partial A$ at
$(-a,0)$ is $x=-a$. Moreover, since $F$ is reversible, condition (i) implies that $F(q_1,q_2,-v_1,v_2) =F(q_1,q_2,v_1,-v_2) = F(q_1, q_2, v_1, v_2)$ for any $(v_1, v_2) \in T_{(q_1, q_2)}\T^2$. Then the same argument as above yields that  the tangent lines at the intersection points
$(0,b)$ and $(0,-b)$ with the $y$-axis are horizontal, that is, 
$y=b$ and $y=-b$, respectively.
Since $\partial A$ is tangent to the vertical lines $x=\pm a$ and the
horizontal lines $y=\pm b$ at these extremal points, and $A$ is convex, it follows that
\[
A \subset [-a,a]\times[-b,b].
\]
Without loss of generality, we assume $a\leq b$ so that $(0,\pm a)\in A$ and then $\ell^1$-ball $B_1(a)\subset A$. 

Let $c$ be any ball-normalized capacity. By the monotonicity of symplectic capacity, we have
\begin{equation}\label{eq-cap-thmE-1}
    2a=c(\T^2 \times B_1(a))\leq c(\T^2\times A)\leq c(\T^2\times [-a,a]\times [-b,b])
  \end{equation}
    where the first equality comes from Proposition \ref{prop-ell-norm}.
Meanwhile, by  considering the symplectic embedding $\Phi$ defined  in (\ref{eq-emb-one}), we have 
\begin{equation}\label{eq-map-polydisk}
    \Phi(\T^2\times ((0,2a)\times (0,2b)))\subset Z^4\left(\sqrt{\frac{2a}{\pi}}\right).
\end{equation} 
Therefore, by (\ref{eq-map-polydisk}) and translation on $(-a,a)\times (-b,b)\subset \R^2$, we have
\begin{equation}\label{eq-cap-thmE-2}
    c(\T^2\times  ((-a,a)\times (-b,b)))=c(\T^2\times ((0,2a)\times (0,2b)))\leq c\left(Z^4\left(\sqrt{\frac{2a}{\pi}}\right)\right) =2a.
\end{equation} 
By considering the closure of $(-a,a)\times (-b,b)\subset \R^2$  we have 
\[c(\T^2\times [-a,a]\times [-b,b])=c(\T^2\times  (-a,a)\times (-b,b))\leq 2a.\]
Combining the inequalities (\ref{eq-cap-thmE-1}) and (\ref{eq-cap-thmE-2}), we conclude that $c(\T^2\times A)=2a.$
 
Finally, since $\T^2\times B_1(a)\subset \T^2\times A\subset \T^2\times([-a,a]\times [-b,b])$ and  $A$ is convex, by  Proposition \ref{prop-convex-hull}  we have
\begin{equation}\label{eq-sys-ineq}
   \sys( \T^2\times \partial B_1(a))\leq\sys(  \T^2\times \partial A)\leq \sys(\T^2\times\partial([-a,a]\times [-b,b])). 
\end{equation} 
By  \cite{Sab10} and the paragraph under Theorem 4.6 in \cite{ABT16}, $\rhoT(\T^2\times  B_1(a))=\frac{1}{4}$, therefore $\sys( \T^2\times \partial B_1(a))=a$. On the other hand, for the point $(a,0)\in  [-a,a]\times [-b,b] \subset \R^2$, the outer unit normal vector at $(a,0)$ is $(1,0)$. By (\ref{eq-action}), there exists a closed Reeb orbit in $\T^2\times \{(a,0)\}$ with action $a$. Therefore,
\[ \sys (\T^2\times \partial([-a,a]\times [-b,b]))\leq a.\]
 Combined with (\ref{eq-sys-ineq}) we deduce that $\sys(  \T^2\times \partial A)=a$. Together with the result that $c(\T^2\times A)=2a$ right above, we conclude that $c(\T^2\times A)=2\sys(\T^2\times \partial A).$
\end{proof}
 Given a convex domain $A\subset \R^2$, one can argue similarly to the proof of Proposition~\ref{prop-ell-norm}. By identifying 
 $\T^2\times A$ (possibly with a shifting on $A$ to guarantee $A\subset \R_{>0}^2$) with a generalized convex toric domain via the symplectic embedding in (\ref{eq-emb-one}). Under this identification, the problem of symplectically embedding a ball into  $(\T^2\times A,\omega_{\rm can})$ is reduced to a ball packing problem for the corresponding toric domain. By Theorem~1.2.1 of \cite{Mcd25}, this reduction allows one to characterize the existence of such embeddings in terms of a comparison of embedded contact homology (ECH) capacities, first invented in \cite{Hut11}. In particular, as shown by Theorem 1.11 in \cite{Hut11}, the  ECH capacities for codisc bundles of torus $D_F^*\T^2$ under a flat reversible Finsler metric $F$ can be computed explicitly, which leads to the proof of Theorem \ref{thm-normalized} under condition (ii).  To facilitate our discussion, let us conclude the following result first.
\begin{prop}\label{prop-first-ech}
    For any codisc bundle of  torus $D_F^*\T^2\in \mathcal{F}_{\rm rev}$ induced by a flat reversible Finsler metric $F$ on $\T^2,$ we have $c_1^{\rm ECH}(D_F^*\T^2)=2\sys(\partial D_F^*\T^2)$ where $c_1^{\rm ECH}$  denotes the first ECH capacity.
\end{prop}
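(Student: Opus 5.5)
The plan is to compute $c_1^{\rm ECH}$ directly from Hutchings' combinatorial formula (Theorem 1.11 in \cite{Hut11}), which applies to $D_F^*\T^2=\T^2\times A$ for a convex domain $A\subset\R^2$. We then compare the result with the action formula (\ref{eq-action}). Concretely, Theorem 1.11 of \cite{Hut11} gives
\[
c_k^{\rm ECH}(\T^2\times A)=\min\bigl\{\ell_A(\Lambda)\ \big|\ \Lambda \text{ a convex lattice polygon enclosing } k+1 \text{ lattice points}\bigr\}.
\]
Here the polygon is allowed to be degenerate, lattice points on the boundary count, and the minimum is taken up to translation. The length is $\ell_A(\Lambda)=\sum_{e}h_A(Je)$, summed over the edges $e$ of $\Lambda$ (traversed counterclockwise and viewed as integer vectors). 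In this formula $h_A(u)=\max_{p\in A}\langle p,u\rangle$ is the support function of $A$ and $J$ is rotation by $\pm\pi/2$. For $k=1$ we must minimize over lattice polygons containing two lattice points.

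\textbf{Step 1: identify the systole with the support function.} By (\ref{eq-Reeb-eqs-3})--(\ref{eq-action}), a simple closed Reeb orbit on $\T^2\times\partial A$ sits over a point $p\in\partial A$ whose rescaled normal $c_p n(p)=m$ is a primitive integer vector. Its action is $\langle p,m\rangle$, and since $A$ is convex this equals $h_A(m)$. Conversely, every primitive $m$ is realized by the point of $\partial A$ where the outer normal is parallel to $m$. For non-smooth $A$ we approximate, as in Example \ref{ex-perturb-cal}. Multiple covers have larger action, so
\[
\sys(\partial D_F^*\T^2)=\min\{h_A(m)\mid m\in\Z^2 \text{ primitive}\}.
\]

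\textbf{Step 2: upper bound.} Let $m$ attain this minimum. Take $\Lambda$ to be the degenerate polygon given by the segment from $0$ to $J^{-1}m$, traversed out and back. Its edges are $\pm J^{-1}m$ and it encloses exactly two lattice points, because $m$ is primitive. Thus
\[
\ell_A(\Lambda)=h_A(m)+h_A(-m)=2h_A(m)=2\sys,
\]
where the middle equality uses $A=-A$, i.e. reversibility of $F$.

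\textbf{Step 3: lower bound.} Any lattice polygon enclosing at least two lattice points has at least two edges, counting a degenerate segment as two edges. Write each edge as $e=k\,u$ with $k\in\Z_{>0}$ and $u$ primitive. Positive homogeneity gives $h_A(Je)=k\,h_A(Ju)\geq\sys$, since $Ju$ is primitive. Hence $\ell_A(\Lambda)\geq 2\sys$, and combining with Step 2 gives $c_1^{\rm ECH}=2\sys$.

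I expect the main obstacle to be bookkeeping with conventions rather than mathematics. One must check that Hutchings' indexing ($k+1$ enclosed lattice points, with boundary points included) and his rotation/orientation convention in $\ell_A$ match the identification $D_F^*\T^2=\T^2\times A$ and the normalization $\T^2=\R^2/\Z^2$ used here. One must also check that Theorem 1.11 of \cite{Hut11}, stated for smooth or polygonal convex $A$, extends to general convex $A$ by continuity of ECH capacities. Should the rotation convention differ, the argument is unchanged, because $J$ preserves primitivity.
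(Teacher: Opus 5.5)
Your proposal is correct and follows essentially the same route as the paper: you specialize Hutchings' lattice-polygon formula (Theorem 1.11 of \cite{Hut11}) to $k=1$, where the only admissible polygons are segments from $0$ to a primitive lattice vector, which gives $c_1^{\rm ECH}=\min_{v\in\Z^2\setminus\{0\}}2F(v)$, and then identify this with $2\sys$; the paper cites \cite{Ben24} for $\sys=\min_{v\in\Z^2\setminus\{0\}}F(v)$, whereas you derive it from (\ref{eq-action}) using $h_A=F$. Your worry about the rotation convention is harmless for every $k$, not only $k=1$: rotation $J$ by $\pi/2$ preserves $\Z^2$, so $\sum_e h_A(Je)=\ell_{\|\cdot\|}(J\Lambda)$ with $|P_{J\Lambda}\cap\Z^2|=|P_\Lambda\cap\Z^2|$.
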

\begin{proof}
Let $F$ be a flat (i.e.\ translation-invariant) reversible Finsler metric on $ \T^2$.
Such a metric determines a norm $\|\cdot\|$ on $\R^2$ by
\[
\|v\| := F(q,v), \qquad \text{for any } v\in T_q\T^2\simeq \R^2,
\]
which is independent of the base point  $q\in \T^2$. For simplicity, we write
$F(v)$ instead of  $F(q,v)$.
By Theorem~1.11 in \cite{Hut11}, the $k$-th ECH capacity of  $ (D_F^*\T^2,\omega_{\rm can})$  is given by
\begin{equation}\label{eq-ECH-cal-2}
c_k^{\rm ECH}(D_F^*\T^2)
=
\min \left\{
  \ell_{\|\cdot\|}(\Lambda)
  \;\middle|\;
  \lvert P_\Lambda \cap \Z^2 \rvert = k+1
\right\},
\end{equation}
where the minimum is taken over all convex polygons  $\Lambda\subset \R^2$ with
vertices in $\Z^2$, $P_{\Lambda}$ denotes the closed region enclosed by $\Lambda$ (hence, $|P_\Lambda\cap \Z^2|$ denotes the number of the lattice points  in $P_{\Lambda}$), 
and $\ell_{\|\cdot\|}(\Lambda)$ denotes the perimeter of $\partial \Lambda$ measured with
respect to the norm  $\|\cdot\|$.

We now specialize to the case $k=1$. The condition
$|P_\Lambda\cap \Z^2|=2$ implies that $\Lambda$ has at most two vertices, and
hence must be a line segment connecting two lattice points. By translation
invariance, we may assume that $\Lambda$ is the segment joining the origin
$(0,0)$ and a nonzero lattice vector $v=(v_1,v_2)\in \Z^2$. In this case,
\begin{equation}\label{eq-identification-norm}
    \ell_{\|\cdot\|}(\Lambda)=2\|v\|=2F(v).
\end{equation}
Moreover, the condition $|P_\Lambda\cap \Z^2|=2$ is equivalent to saying that the segment
contains no lattice points other than its endpoints, which holds precisely when
$v$ is primitive, i.e.   $\gcd(v_1,v_2)=1$, or when
 $v\in\{(\pm1,0),(0,\pm1)\}$. Since every nonzero lattice vector is an integer
multiple of a primitive one, minimizing over all primitive vectors is equivalent
to minimizing over all nonzero vectors in  $\Z^2$. Consequently,
\eqref{eq-ECH-cal-2} reduces to
\begin{equation}\label{eq-ECH-sys}
c_1^{\rm ECH}(D_F^*\T^2)
=
\min \left\{
  2F(v)
  \;\middle|\;
  v\in \Z^2,\ v\neq 0
\right\}.
\end{equation}

Finally, for a flat Finsler metric $F$ on $\T^2$, the minimal action  is given by 
\[\sys(\partial D_F^*\T^2)=\min \{F(v)\mid v\in \Z^2,\ v\neq 0\},\]
as explained at the beginning of Section~2 in \cite{Ben24}. Combining this with
\eqref{eq-ECH-sys}, we conclude that $c_1^{\rm ECH}(D_F^*\T^2)=2 \sys(\partial D_F^*\T^2).$ 
\end{proof}

Then we prove the second part of Theorem \ref{thm-normalized} as follows.

\begin{proof}[Proof of Theorem \ref{thm-normalized}, under condition (ii)]
For any codisc bundle of torus $D_F^*\T^2$ with respect to a flat reversible metric $F$ with $\rhoT(D_F^*\T^2)\leq \frac{1}{8}$, we identify it with product domain $\T^2\times A$. By rescaling on $A$ we may assume $\sys(\partial D_F^*\T^2)=1$, then it suffices to prove $c(D_F^*\T^2)=2$ for any ball-normalized capacity $c.$ The upper bound comes from Theorem 1.1 in \cite{Ben24}. The lower bound can be argued as follows.

Similar to  the proof of Proposition \ref{prop-ell-norm}, we translate  the convex domain $A\subset \R^2$ to $\R^2_{\geq 0}$ by a vector $v$. Then by  (\ref{eq-emb-one}),  $(\T^2\times  \mathrm{int}( A+v),\omega_{\rm can})$ is symplectomorphic to a generalized convex toric domain $(X_{\mathrm{int}( A+v)},\omega_{\rm std})$. Then by  Theorem 1.2.1 in \cite{Mcd25}, the symplectic ball $(B^4 (\sqrt{2/\pi}) ,\omega_{\rm std})$ (as a concave toric domain)  symplectically embeds  into $(X_{\mathrm{int}( A+v)},\omega_{\rm std})$ if and only if the following relation holds:
\begin{equation}\label{eq-ech-relatioin-1}
    c_k^{\rm ECH}\left(B^4\left(\sqrt{\frac{2}{\pi}}\right)\right)\leq c_k^{\rm ECH}(X_{\mathrm{int}( A+v)}),\quad \text{ for any $k\geq 1$}
\end{equation} 
   where  $c_k^{\rm ECH}$ denotes the $k$-th ECH capacity. By equation (1.3) in \cite{KDDMV14}, the  ECH capacities of  ball $(B^4 (\sqrt{2/\pi}) ,\omega_{\rm std})$ can be calculated by 
 \begin{equation}\label{eq-ECH-ball-1}
     c_k^{\rm ECH}\left(B^4\left(\sqrt{\frac{2}{\pi}}\right)\right)=2d,
 \end{equation}
 where $d$ is the unique non-negative integer that satisfies $ \frac{d^2+d}{2}\leq k\leq \frac{d^2+3d}{2}$.   In particular, we have $\frac{d^2+d}{2}\leq k$, therefore $d\leq \sqrt{2k+\frac{1}{4}}-\frac{1}{2}$. Thus
 \begin{equation}\label{eq-ECH-ball-2}
     c_k^{\rm ECH}\left(B^4\left(\sqrt{\frac{2}{\pi}}\right)\right)=2d\leq  \sqrt{8k+1}-1.
 \end{equation} 
 Meanwhile, by the symplectomorphism invariant property of any capacity, for any $k\geq 1$,
 \begin{equation}\label{eq-ech-equal}
     c_k^{\rm ECH}(X_{\mathrm{int}( A+v)})=c_k^{\rm ECH}(\T^2\times \mathrm{int}(A+v))=c_k^{\rm ECH}(\T^2\times \mathrm{int}(A)).
 \end{equation} 
Then the inclusion $\mathrm{int}(A)\subset A\subset 
 (1+\ep)\mathrm{int}(A)$ implies that for any $\ep>0$ we have
 \begin{equation}\label{eq-ech-equal-2}
     c_k^{\rm ECH}(\T^2\times \mathrm{int}(A))=c_k^{\rm ECH}(\T^2\times A).
 \end{equation} 
 Combine  (\ref{eq-ECH-ball-1}), (\ref{eq-ECH-ball-2}), (\ref{eq-ech-equal}) and (\ref{eq-ech-equal-2}), relation in (\ref{eq-ech-relatioin-1}) holds if the following relation holds:  
\begin{equation}\label{eq-lowerbound-product}
        c_k^{\rm ECH}(\T^2\times A) \geq   \sqrt{8k+1}-1,\quad \text{ for any $k\geq 1$.}
\end{equation}

Recall the  classical Wulff inequality  as in equation (20.14) in \cite{Mag12} with a clean statement in Section 1 of \cite{FZ22}. In particular, we take convex, positive, 1-homogeneous function $f $ appearing at the beginning of Section 1 of \cite{FZ22}  as the norm $\|\cdot\|.$  Then for any domain $\Lambda\in \R^2$ with finite perimeter $\ell_{\|\cdot\|}(\Lambda)$, by the Wulff inequality\footnote{This inequality was also used in the proof of Lemma 3.11 of \cite{Hut22}, where only the equality case was needed to estimate the upper bound of the ECH capacities of a convex toric domain. In contrast, here we use the  inequality to estimate the lower bound of the ECH capacities of $\T^2\times A$.} we have
\begin{equation}\label{eq-perimeter}
    \ell_{\|\cdot\|}(\Lambda)\geq 2\cdot \mathrm{area}(B^{*} )^{\frac{1}{2}}\cdot \mathrm{area}(\Lambda)^{\frac{1}{2}} 
\end{equation} 
where $B^{*}\subset \R^2$ is the dual of $\{x\in \R^2\mid \|x\|\leq 1\}.$ In our case, since $F(\cdot)=\|\cdot\|$, by definition in (\ref{dfn-F*}) we have $B^{*}=A$.  Then, under condition (ii), we have 
\[ \begin{aligned}
\frac{1}{8}&\;\ge\; \rhoT(\T^2\times A)\\
&=\frac{\sys(\T^2\times \partial A)^2}
{\mathrm{vol}\!\left(\T^2\times \partial A,
\lambda_{\rm can}\big|_{\T^2\times \partial A}\right)} \\[0.3em]
&=\frac{1}{\mathrm{vol}\!\left(\T^2\times \partial A,
\lambda_{\rm can}\big|_{\T^2\times \partial A}\right)}
=\frac{1}{2\,\mathrm{area}(A)} 
\end{aligned}
\]
where we use our assumption that $\sys(\T^2\times \partial A)=1$ in the second inequality. The above inequality implies $\mathrm{area}(A)\geq 4.$ 

Moreover, for any convex polygon $\Lambda$ with vertices in $\Z^2$, denote by $I(\Lambda)$ the number of lattice points in the interior of $\Lambda$ and $B(\Lambda)$ as the   number of lattice points on the boundary of $\Lambda.$ By the well-known Pick's theorem, we have
\[\mathrm{area}(\Lambda)=I(\Lambda)+\frac{B(\Lambda)}{2}-1.\]
By definition, $|P_{\Lambda}\cap \Z^2|=I(\Lambda)+B(\Lambda)$ where recall $|\Lambda\cap \Z^2|$ (appearing in (\ref{eq-ECH-cal-2})) denotes the number of lattice points in the closed region $P_{\Lambda}$ bounded by $\Lambda$. Therefore, for any convex polygon $\Lambda$ with vertices in $\Z^2$ such that   $|P_\Lambda\cap \Z^2|=k+1$, we have
\[\mathrm{area}(\Lambda)=I(\Lambda)+\frac{B(\Lambda)}{2}-1\geq  \frac{I(\Lambda)+B(\Lambda)}{2}-1=\frac{|P_\Lambda\cap \Z^2|}{2}-1=\frac{k-1}{2}. \]
Together with (\ref{eq-perimeter}), for any  convex polygon $\Lambda$ under the assumption that $|P_\Lambda\cap \Z^2|=k+1$, we have
 \begin{equation}\label{eq-perimeter-low-bound}
     \begin{split}
         \ell_{\|\cdot\|}(\Lambda)&\geq 2\cdot\mathrm{area}(B^{*} )^{\frac{1}{2}}\cdot \mathrm{area}(\Lambda)^{\frac{1}{2}}\\&=2\cdot\mathrm{area}(A)^{\frac{1}{2}}\cdot\mathrm{area}(\Lambda)^{\frac{1}{2}}\\&\geq  4\cdot \mathrm{area}(\Lambda)^{\frac{1}{2}}\geq 4\sqrt{\frac{k-1}{2}}\\&= \sqrt{8 k- 8}
     \end{split}
 \end{equation}
 where we use the fact that $B^*=A$ in the first equality and $\mathrm{area}(A)\geq 4$ for the second inequality. 
 
 Therefore by (\ref{eq-perimeter-low-bound}) we have
 \[\ell_{\|\cdot\|}(\Lambda)\geq  \sqrt{8k-8}\geq \sqrt{8k+1}-1 \quad \text{ for any $k\geq 3.$}\]
Then by (\ref{eq-ECH-cal-2}) and the inequality above, we have
\begin{equation}\label{eq-lowerbound-final}
    \begin{split}
        c_k^{\rm ECH}(\T^2\times A) &=\min \left\{
  \ell_{\|\cdot\|}(\Lambda)
  \;\middle|\;
  \lvert P_\Lambda \cap \Z^2 \rvert = k+1
\right\} \\&\geq\sqrt{8k+1}-1\geq   c_k^{\rm ECH}\left(B^4\left(\sqrt{\frac{2}{\pi}}\right)\right)
    \end{split}
\end{equation} 
for any $k\geq 3$. For $k= 1$, the inequality (\ref{eq-lowerbound-final}) is verified by Proposition \ref{prop-first-ech}. For $k=2$, 
\[c_2^{\rm ECH}(\T^2\times A)\geq c_1^{\rm ECH}(\T^2\times A)=2=  c_2^{\rm ECH}\left(B^4\left(\sqrt{\frac{2}{\pi}}\right)\right)\]
where the first inequality comes from the fact that the ECH capacities of a symplectic manifold is non-decreasing in $k$ (cf.~Section 1.1 in \cite{Hut11}) and the last equality comes from computation in (\ref{eq-ECH-ball-1}). Consequently, (\ref{eq-ech-relatioin-1}) yields a symplectic embedding from $(B^4 (\sqrt{2/\pi}) ,\omega_{\rm std})$  into $(X_{\mathrm{int}( A+v)},\omega_{\rm std})$. Thus for any ball-normalized capacity $c$, we have
 \[c(X_{\mathrm{int}( A+v)})\geq c\left(B^4\left(\sqrt{\frac{2}{\pi}}\right)\right)=2.\]
Finally, similarly to (\ref{eq-ech-equal}) and (\ref{eq-ech-equal-2}), we conclude that
 \[c(\T^2\times A)= c(\T^2\times \mathrm{int}(A))=c(\T^2\times \mathrm{int}(A+v))=c(X_{\mathrm{int}( A+v)})\geq 2,\]
 which gives the lower bound of $c(\T^2\times A)$ as desired.
\end{proof}

Next, we give the proof of Corollary \ref{cor-cap-cylinder}. 
\begin{proof}[Proof of Corollary \ref{cor-cap-cylinder}]
By rescaling we may assume $r=1.$  
    For any unit vector $v$, if it is a scalar multiple of a prime integer vector $\alpha=(m,n)$, we can write 
    \[v=\left(\frac{m}{\sqrt{m^2+n^2}},\frac{n}{\sqrt{m^2+n^2}}\right).\]
   By symmetry to $x$ and $y$ axes, we may assume $m\leq  0,n>0.$ If $m=0$ so that $n=1,$  
   \[Y^4(1,(0,1))=\T^2\times (\R\times   [-1,1]).\]
   Then by translating $ \R\times   [-1,1]$ to  $ \R\times [\ep,2+\ep]$ and applying embedding $\Phi$ in (\ref{eq-emb-one}), we have $\Phi(\T^2\times (\R\times [\ep,2+\ep]))\subset Z^4\left(\sqrt{\frac{2+\ep}{\pi}}\right)$ and 
   \begin{equation}\label{eq-width-up-bound}
       c (\T^2\times (\R\times   [-1,1]))=c (\T^2\times(\R\times   [\ep,2+\ep]))\leq c \left(Z^4\left(\sqrt{\frac{2+\ep}{\pi}}\right)\right)=2+\ep
   \end{equation} 
   for any $\ep>0$.  On the other hand,  by Proposition  \ref{prop-ell-norm}, $c (\T^2\times B_1(1))=2$ and $\T^2\times B_1(1)\subset \T^2\times (\R\times [-1,1])$. Therefore
   \[c (\T^2\times (\R\times   [-1,1]))\geq c (\T^2\times B_1(1))=2.\]
   Combined with (\ref{eq-width-up-bound}), by letting $\ep\to0$ we have
   \[c (Y^4(1,(0,1))=c (\T^2\times(\R\times   [-1,1]))=2.\]
   
Next, consider the case where $m<0 $ and $n>0$.  By definition, the boundary of $Y^4(1,v)$ consists of two  components:
    \[\T^2\times \left\{(x,y)\mid mx+ny=\sqrt{m^2+n^2}\right\},\quad \T^2\times \left\{(x,y)\mid mx+ny=-\sqrt{m^2+n^2}\right\}.\]
In particular, for any $a>0$, the following  convex polygon is contained in $(-1,1)v\times v^{\perp}\subset \R^2$ with vertices:
    \[\pm \left(a,\frac{\sqrt{m^2+n^2}}{n}-\frac{m}{n}a\right),\quad \pm \left(-a,\frac{\sqrt{m^2+n^2}}{n}+\frac{m}{n}a\right) ,\]
    and denote this polygon as $P(a,v)$.
    
We will prove that for large enough $a>0,$  $(\T^2\times \partial P(a,v),\lambda_{\rm can}|_{\T^2\times \partial P(a,v)})$ admits minimal action $\sys(\T^2\times \partial  P(a,v))=\sqrt{m^2+n^2}$. 
    First, by (\ref{eq-action}), the (rescaled) outer normal vector at 
    $\left(\frac{m}{\sqrt{m^2+n^2}},\frac{n}{\sqrt{m^2+n^2}}\right)$ is $\alpha=(m,n)$ and there exists a closed Reeb orbit  in $\T^2\times \left\{\left(\frac{m}{\sqrt{m^2+n^2}},\frac{n}{\sqrt{m^2+n^2}}\right)\right\}$ with action 
    \begin{equation}\label{eq-action-m-n}
        m\cdot \frac{m}{\sqrt{m^2+n^2}}+n\cdot \frac{n}{\sqrt{m^2+n^2}}=\sqrt{m^2+n^2}.
    \end{equation} 

Around each vertex, we apply similar smoothing process as in Example \ref{ex-perturb-cal}.  Suppose there exists a closed Reeb orbit in  $\T^2\times \{p\}$ for some $p$ that is in $\ep$-small neighborhood of the vertex $\left(a,\frac{\sqrt{m^2+n^2}}{n}-\frac{m}{n}a\right)$ with (rescaled) outer normal vector $(c,d)$. By (\ref{eq-action}), we can write the action of this orbit as 
\[T=p\cdot (c,d)= a  c+\left(\frac{\sqrt{m^2+n^2}}{n}-\frac{m}{n}a\right)  d+\mathcal{O}(\ep)=a\left(c-\frac{dm}{n}\right)+d\frac{\sqrt{m^2+n^2}}{n}+\mathcal{O}(\ep)\]
where the term $\mathcal{O}(\ep)$ comes from the perturbation of $\T^2\times \partial P(a,v)$ near the vertex and it converges to $0$ as $\ep\to0.$ 

Observe  that around the vertex $\left(a,\frac{\sqrt{m^2+n^2}}{n}-\frac{m}{n}a\right)$, the normal vector satisfies $d\geq 0$ and $cn\geq dm$ 
with inequality holds if  and only if  $c=m$ and $d=n.$ When  $c=m$ and $d=n$, $T=\sqrt{m^2+n^2}+\mathcal{O}(\ep)$ converges to $\sqrt{m^2+n^2}$ as $\ep\to0.$   Otherwise, $cn-dm\geq  1.$ By the fact that $n>0,$ we have
    \[T=a\left(c-\frac{dm}{n}\right)+d\frac{\sqrt{m^2+n^2}}{n}+\mathcal{O}(\ep)\geq \frac{a}{n}+d\frac{\sqrt{m^2+n^2}}{n}+\mathcal{O}(\ep)\geq \frac{a}{n}+ \mathcal{O}(\ep) .\]
For a fixed pair $(m,n)$, by letting $a$ big enough and $\ep$ small enough we conclude that
\[T\geq \sqrt{m^2+n^2}\]
for any closed Reeb orbit that is in   $\T^2\times \{p\}$. Similarly we obtain the same conclusion around other three vertices. Together with calculation in (\ref{eq-action-m-n}), we have
\[\mathrm{sys}(\T^2\times \partial P(a,v))=\sqrt{m^2+n^2}\]
for large enough $a>0.$ Since the volume of $\T^2\times \partial P(a,v)$ goes to infinity as $a\to +\infty$,  we have
\[\rhoT( \T^2\times \partial P(a,v))<\frac{1}{8}\]
for large-enough $a>0.$ Then by condition (ii) of Theorem \ref{thm-normalized}, we have
\begin{equation}\label{eq-cap-cylinder}
    c(\T^2\times P(a,v))=2\sys(\T^2\times \partial P(a,v))=2\sqrt{m^2+n^2}.
\end{equation} 
Since $\T^2\times P(a,v)\subset Y^4(1,v)$, we have the following lower bound:
\[c( Y^4(1,v))\geq c(\T^2\times P(a,v)) =2\sqrt{m^2+n^2}.\]
The upper bound $c( Y^4(1,v))\leq 2\sqrt{m^2+n^2}$ follows  from Theorem 1.1 in \cite{Ben24}.

Finally, In the case where $v$ is {\it not} a scalar multiple of an integer vector, Theorem A in \cite{FZ25} implies that for every integer $a>0$ there exists a symplectic embedding
\[
(\T^2\times \{(x,y)\in \R^2_{\ge 0}\mid x+y\le a\},\omega_{\rm can})
\hookrightarrow (Y^4(1,v),\omega_{\rm can}).
\]
By monotonicity of the symplectic capacity $c$, this yields
\[
c \left(Y^4(1,v)\right)
\ge
c \left(\T^2\times \{(x,y)\in \R^2_{\ge 0}\mid x+y\le a\}\right).
\]
By Corollary~\ref{cor-capacity-BM},
\[
c \left(\T^2\times \{(x,y)\in \R^2_{\ge 0}\mid x+y\le a\}\right)
=
c \left(B^4\!\left(\sqrt{\tfrac{a}{\pi}}\right)\right)
=
a.
\]
Therefore $c(Y^4(1,v))\ge a$ for every $a>0$, which implies $c(Y^4(1,v))=+\infty$. \end{proof}

Here, we emphasize that the proof of Theorem \ref{thm-normalized} (under condition (ii)) is based on Proposition \ref{prop-first-ech}, in particular, the formula (\ref{eq-ECH-cal-2}). This formula applies to the codisc bundle of a  flat reversible Finsler metric on $\T^2$, according to Theorem 1.11 in \cite{Hut11}. The following example shows that there exists a flat but non-reversible metric $F$ on $\T^2$ where the conclusion in Theorem \ref{thm-normalized} may fail. 
 
\begin{ex}\label{ex-cap-non-coin}
For the product domain $(\T^2\times A,\omega_{\rm can})$ appearing in Remark \ref{rmk-1-3} where   $A$ is a triangle with vertices $(0,1),(1,0),(-1,-1)$, ball-normalized capacities do {\it not} coincide.  In particular,  $c_{\rm Gr}(\T^2\times A)=\frac{3}{2}$ and $c_1^{\rm ECH}(\T^2\times A)=2$.
 
By translation on $A\subset \R^2$, we obtain a symplectomorphism 
\[(\T^2\times A,\omega_{\rm can})\simeq (\T^2\times(A+(1,1)),\omega_{\rm can})\]
where $A+(1,1)$ is a triangle in $\R^2_{\geq 0}$ with vertices $(0,0),(2,1),(1,2),$ denoted by $A'.$ Then the symplectomorphism in (\ref{eq-symplectomorphism}) applies and
\[(\T^2\times \mathrm{int}(A'),\omega_{\rm can})\simeq  (X_{\mathrm{int}(A')},\omega_{\rm std})\]
where $X_{\mathrm{int}(A')}$ is a generalized convex toric domain.

Following the  cutting algorithm in  Section 2.1 of \cite{Mcd25}, $X_{\mathrm{int}(A')}$ admits  weight decomposition $(3;1,1,1,1,1,1)$. By  Theorem 1.2.1 of \cite{Mcd25}, the existence of a symplectic embedding of the ball $(B^4(a),\omega_{\rm std}) \hookrightarrow(X_{\mathrm{int}(A)},\omega_{\rm std})$ is equivalent to the existence of the following symplectic embedding: 
   \[ \left(\bigsqcup_{i=1}^6 B_i^4\left(\sqrt{\frac{1 }{\pi}}\right)\bigsqcup B^4(a),\omega_{\rm std}\right)\hookrightarrow \left( B^4\left(\sqrt{\frac{3  }{\pi}}\right),\omega_{\rm std}\right).\]
  First we prove that $a$ can be taken as $ \sqrt{\frac{3  }{2\pi}}$, i.e., there exists a symplectic embedding:
  \[ \left(\bigsqcup_{i=1}^6 B_i^4\left(\sqrt{\frac{1 }{\pi}}\right)\bigsqcup B^4 \left(\sqrt{\frac{3  }{2\pi}}\right),\omega_{\rm std}\right) \hookrightarrow \left(B^4\left(\sqrt{\frac{3  }{\pi}}\right),\omega_{\rm std}\right).\]
  By  Theorem 1.2 in \cite{Dan19}, this is equivalent to prove the following relation:  
  \begin{equation}\label{eq-ECH-relation}
      c_k^{\rm ECH}\left(\bigsqcup_{i=1}^6 B_i^4\left(\sqrt{\frac{1 }{\pi}}\right)\bigsqcup B^4 \left(\sqrt{\frac{3  }{2\pi}}\right) \right)\leq c_k^{\rm ECH}\left( B^4\left(\sqrt{\frac{3  }{\pi}}\right)\right), \quad \text{for any  $k\in \N$ }
  \end{equation}
  where $c_k^{\rm ECH}$ is the $k$-th ECH capacity.
 On the one hand, by the combinatorial formula of the ECH capacities (see (1.5) in \cite{KDDMV14}), given a disjoint union of balls $ \coprod_{i=1}^{n} B^4\left(\sqrt{a_i/\pi}\right) $, we have 
 \begin{equation}\label{eq-ECH-union-ball}
     c_k^{\rm ECH}\left(\coprod_{i=1}^{n} B^4\left(\sqrt{\frac{a_{i}}{\pi}}\right)\right)=\max \left\{\sum_{i=1}^{n} a_{i} d_{i} \,\left\lvert\, d_i\in \N,\, \sum_{i=1}^{n} \frac{d_{i}^{2}+d_{i}}{2} \leq k\right.\right\}.
 \end{equation}
By the Cauchy–Schwarz inequality
 we have
 \[\sum_{i=1}^{n} a_{i} d_{i} \leq \sqrt{\left(\sum_{i=1}^n a_i^2\right)\cdot \left(\sum_{i=1}^n d_i^2\right) }\leq \sqrt{2k\left(\sum_{i=1}^n a_i^2\right)} \] 
 for any sequence $\{d_i\}_{i=1}^n$ that satisfies the relation in (\ref{eq-ECH-union-ball}). Consequently by (\ref{eq-ECH-union-ball})  we have
 \[  c_k^{\rm ECH}\left(\coprod_{i=1}^{n} B^4\left(\sqrt{\frac{a_{i}}{\pi}}\right)\right)\leq  \sqrt{2k\left(\sum_{i=1}^n a_i^2\right)}.\]
 Applying the inequality above to our case where $n=7$ and $(a_i)_{i=1}^7=(1,1,1,1,1,1,3/2)$, we derive:
 \begin{equation}\label{eq-ECH-up-bound}
     c_k^{\rm ECH}\left(\bigsqcup_{i=1}^6 B_i^4\left(\sqrt{\frac{1 }{\pi}}\right)\bigsqcup B^4 \left(\sqrt{\frac{3  }{2\pi}}\right) \right)\leq  \sqrt{\frac{33}{2}k}.
 \end{equation} 
 On the other hand, by equation (1.3) in \cite{KDDMV14}, the  capacities of a ball can be calculated by 
 \begin{equation}\label{eq-ECH-ball}
    c_k^{\rm ECH}\left(B^4\left(\sqrt{\frac{a}{\pi}}\right)\right)=ad,
 \end{equation}
 where $d$ is the unique non-negative integer that satisfies $ \frac{d^2+d}{2}\leq k\leq \frac{d^2+3d}{2}$. Then  we have a lower bound of $d$:
 \[ d\geq  \sqrt{2k+\frac{9}{4}}-\frac{3}{2}.\]
 Therefore by (\ref{eq-ECH-ball}) and the lower bound of $d:$
 \[c_k^{\rm ECH}\left(B^4\left(\sqrt{\frac{a}{\pi}}\right)\right)=ad\geq a\left(\sqrt{2k+\frac{9}{4}}-\frac{3}{2}\right).\]
In our case, we have
 \begin{equation}\label{eq-ECH-low-bound}
     c_k^{\rm ECH}\left( B^4\left(\sqrt{\frac{3  }{\pi}}\right)\right) \geq 3\left(\sqrt{2k+\frac{9}{4}}-\frac{3}{2}\right).
 \end{equation} 
 Combining (\ref{eq-ECH-up-bound}) and (\ref{eq-ECH-low-bound}),  we conclude that
 \[c_k^{\rm ECH}\left(\bigsqcup_{i=1}^6 B_i^4\left(\sqrt{\frac{1 }{\pi}}\right)\bigsqcup B^4 \left(\sqrt{\frac{3  }{2\pi}}\right) \right)\leq c_k^{\rm ECH}\left( B^4\left(\sqrt{\frac{3  }{\pi}}\right)\right) \quad \text{for any  $k\geq 594$ }.\]
 The rest of relations in  (\ref{eq-ECH-relation}) for $k\leq 593$ can be verified directly by a computer program (for instance, by exhausting all possible choices of $d_i$ in (\ref{eq-ECH-union-ball}) for each fixed $k$). Therefore, the relation (\ref{eq-ECH-relation}) always holds and $a$ can be taken as $ \sqrt{\frac{3  }{2\pi}}$.
 
Since the weight sequence of the  toric domain $X_{\mathrm{int}(A')}$ is given by  $(3;1,1,1,1,1,1)$, we calculate the ECH capacity of $X_{\mathrm{int}(A')}$ using Lemma 3.3.1 in \cite{Mcd25} as 
\begin{equation}\label{eq-first-ech}
c_k^{\rm ECH}(X_{\mathrm{int}(A')})  \coloneqq 
\min_{\,k = l - k_1 - \cdots - k_6}   
\left(  c_l^{\rm ECH}\!\left(B^{4}\!\left(\sqrt{\tfrac{3}{\pi}}\right)\right)  
 -\sum_{i=1}^6 c_{k_i}^{\rm ECH}\!\left(B^{4}\!\left(\sqrt{\tfrac{1}{\pi}}\right)\right)\right).
\end{equation}
Especially, by taking $k=3$ and $(l,k_1\ldots,k_6)=(9,1,1,1,1,1,1)$, we conclude that
$c_3^{\rm ECH}(X_{\mathrm{int}(A')})\leq 3$. If there exists a ball $B^4(b)$ embedded into $X_{\mathrm{int}(A')}$ with $b> \sqrt{\frac{3  }{2\pi}} $,  by (\ref{eq-ECH-ball}), we have $c_3^{\rm ECH}(B^4(b))>3$. However,  by the monotonicity of ECH capacities (cf.~Theorem 1.1 in \cite{Hut11}), we have
\[c_3^{\rm ECH}(B^4(b))\leq c_3^{\rm ECH}(X_{\mathrm{int}(A')})\leq 3,\]
which is a contradiction. Therefore  $c_{\rm Gr}(X_{\mathrm{int}(A')})\leq \frac{3}{2}$. In conclusion, $B^4\left(\sqrt{\frac{3  }{2\pi}}\right)$ is the largest ball embedded into $X_{\mathrm{int}(A')}$, so $c_{\rm Gr}(X_{\mathrm{int}(A')})=c_{\rm Gr}(\T^2\times {\mathrm{int}(A')})=\frac{3}{2}.$

 Next we compute the first ECH capacity of $ X_{\mathrm{int}(A')}$.   By taking $(k,l,k_1,\ldots,k_6)=(1,9,3,1,1,1,1,1)$ in (\ref{eq-first-ech}),   we conclude that
   $c_1^{\rm ECH}(X_{\mathrm{int}(A')})\leq 2$. For any ball-normalized capacity $c$, by definition of the Gromov width $c_{\rm Gr}\leq c$. In particular, we have (since $c_1^{\rm ECH}$ is a ball-normalized capacity)
   \begin{equation}\label{eq-first-ECH-relation}
       c_1^{\rm ECH}(X_{\mathrm{int}(A')})=c_1^{\rm ECH}(\T^2\times \mathrm{int}(A'))\geq c_{\rm Gr}(\T^2\times \mathrm{int}(A'))\geq \frac{3}{2}.
   \end{equation} 
Note that by (\ref{eq-ECH-ball}), if $a\in \N$, then
\[c_k^{\rm ECH}\left(B^4\left(\sqrt{\frac{a}{\pi}}\right)\right)=ad\]
is always an positive integer for any $k\geq 1.$
For  the right-hand side of   (\ref{eq-first-ech}), the minimum is taken from a linear combination of integers, which is again an integer.   Thus   $c_1^{\rm ECH}(X_{\mathrm{int}(A')})$ is   an integer. Combine  $c_1^{\rm ECH}(X_{\mathrm{int}(A')})\leq 2$ with  (\ref{eq-first-ECH-relation}), we deduce that 
\[c_1^{\rm ECH}(X_{\mathrm{int}(A')})=c_1^{\rm ECH}(\T^2\times \mathrm{int}(A'))=2.\]
Since $ \T^2\times \mathrm{int}(A )\subset \T^2\times A  \subset \T^2\times  (1+\ep)\cdot \mathrm{int}(A ) $ for any $\ep>0$, we conclude that 
\[c_1^{\rm ECH}(\T^2\times A )=c_1^{\rm ECH}(\T^2\times \mathrm{int}(A ))=c_1^{\rm ECH}(\T^2\times \mathrm{int}(A'))=2. \]
Therefore, $c_{\rm Gr}(\T^2\times A) \neq c_1^{\rm ECH}(\T^2\times A)$. 
\end{ex}

 As in the Euclidean case, coincidence of ball-normalized symplectic capacities are verified  not only on certain classes of convex domains, but also on some subfamily of dynamically convex domains, including monotone toric domains. Similarly, in cotangent bundle of torus we can also establish coincidence of normalized symplectic capacities on product domains that are not fiberwise convex.
By    directly applying   Proposition \ref{prop-toric} and  Corollary \ref{cor-capacity-BM}, we argue as follows.
\begin{proof}[Proof of Theorem \ref{thm-ball-normalized}]
By Theorem 1.2  in \cite{DR232}, all  ball-normalized capacities coincide on monotone toric domains. Then by Corollary \ref{cor-capacity-BM}, for any  ball-normalized symplectic capacity $c$ and  toric domain $X_\Omega$, $c(X_\Omega)=c(\T^n\times \Omega)$ therefore  ball-normalized capacities coincide on $(\T^n\times \Omega,\omega_{\rm can})$.
 By Theorem 2 in \cite{GMV22}, all  cube-normalized capacities agree on monotone toric domains. Similarly, by Corollary \ref{cor-capacity-BM}, we conclude that all cube-normalized capacities coincide on $(\T^n\times \Omega,\omega_{\rm can})$.
\end{proof}
 Finally, let us end this section with an interesting point. The coincidence of ball-normalized capacities on $(\T^n \times A,\omega_{\rm can})$ where $A$ is convex and centrally symmetric is in fact a direct consequence of the  coincidence of ball-normalized capacities on Lagrangian product $A\times A^*\subset (T^*\R^n,\omega_{\rm can})$ where $A^*$ is the dual of $A$. Equivalently, one may phrase this result in contrapositive form as follows.
\begin{prop}\label{prop-counterexample} For $\T^n \times A \in \mathcal T_{T^*\T^n}$ where $A$ is a centrally symmetric convex domain in $\R^n$, if 
\begin{equation} \label{Gr-HZ}
c_{\rm Gr}(\T^n\times A)< 2 \mathrm{sys}(\T^n\times \partial A)
\end{equation}
then, under the identification $\Psi: (T^*\R^n,\omega_{\rm can})\simeq (\R^{2n},\omega_{\rm std})$ defined by 
\[\Psi (q_1,\ldots, q_n, p_1,\ldots ,p_n) =  (q_1,p_1,\ldots,q_n,p_n)  \quad \text{for any $(q_1,\ldots, q_n, p_1,\ldots p_n) \in T^*\R^n$,}\]
the image $\Psi(A \times A^*)$ is a centrally symmetric convex domain in $(\R^{2n}, \omega_{\rm std})$ where the strong Viterbo conjecture fails, explicitly, $c_{\rm Gr}(\Psi(A \times A^*)) \neq c_{\rm HZ}(\Psi(A \times A^*))$. Here, $c_{\rm HZ}$ denotes the Hofer-Zehnder capacity (for more details, see \cite{HZ90}). \end{prop}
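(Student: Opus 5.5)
The plan is to compare the Gromov width of $\T^n\times A$ with that of a \emph{rescaled} Lagrangian product, using the universal covering $T^*\R^n\to T^*\T^n$. For the Hofer--Zehnder side I will use the known computation of capacities of Lagrangian products of polar bodies. Throughout, write $\|v\|_{A^*}=h_A(v)=\max_{p\in A}\langle p,v\rangle$ for the norm dual to $A$; it is a norm because $A$ is centrally symmetric and convex.

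\emph{Step 1 (systole).} I will first show that
\[
\mathrm{sys}(\T^n\times\partial A)=\min\{\|v\|_{A^*} : v\in\Z^n\setminus\{0\}\}=:\sigma .
\]
By the Reeb-flow computation preceding (\ref{eq-action}), carried out verbatim in $n$ variables, closed orbits lie in tori $\T^n\times\{p\}$. The outer normal $n(p)$ must be proportional to a lattice vector $m$, and the action of such an orbit is $\langle p,m\rangle$. When the outward normal at $p$ is parallel to $m$, this equals $h_A(m)$. So the action spectrum is $\{h_A(m) : m\in\Z^n\setminus\{0\}\}$, and its minimum is $\sigma$. If $\partial A$ is not smooth, I will use the limit convention adopted in the paper.

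\emph{Step 2 (symplectic normal form and embedding).} The map $(q,p)\mapsto(p,-q)$ is a symplectomorphism of $T^*\R^n$. Since $-A=A$, it carries $A\times A^*$ onto $A^*\times A$, where now $A^*$ sits in the $q$-factor and $A$ in the $p$-factor. The linear map $(q,p)\mapsto(sq,p)$ pulls $\omega_{\rm can}$ back to $s\,\omega_{\rm can}$. Hence, by conformality of $c_{\rm Gr}$,
\[
c_{\rm Gr}\bigl((sA^*)\times A\bigr)=s\,c_{\rm Gr}(A^*\times A)=s\,c_{\rm Gr}(\Psi(A\times A^*)).
\]
Now take $s=\sigma/2$. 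The projection $\R^n\to\T^n=\R^n/\Z^n$ is injective on $\mathrm{int}(sA^*)$ provided $\mathrm{int}(sA^*)-\mathrm{int}(sA^*)=\mathrm{int}(2sA^*)$ contains no nonzero lattice point. Here I use the central symmetry and convexity of $A^*$ to write the difference set as $2sA^*$. The condition holds exactly because every nonzero lattice vector $v$ satisfies $\|v\|_{A^*}\ge\sigma=2s$. The covering map $T^*\R^n\to T^*\T^n$ is a symplectic local diffeomorphism. It therefore gives a symplectic embedding $\mathrm{int}(sA^*)\times\mathrm{int}(A)\hookrightarrow\T^n\times A$. Combining this with the previous display and passing to interiors in the limit, I get
\[
\tfrac{\sigma}{2}\,c_{\rm Gr}(\Psi(A\times A^*))\le c_{\rm Gr}(\T^n\times A)<2\sigma .
\]
The strict inequality on the right is the hypothesis (\ref{Gr-HZ}) combined with Step 1. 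Therefore $c_{\rm Gr}(\Psi(A\times A^*))<4$.

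\emph{Step 3 (Hofer--Zehnder side).} The set $\Psi(A\times A^*)$ is convex and centrally symmetric, being a product of such sets. For convex domains, $c_{\rm HZ}$ agrees with the Ekeland--Hofer--Zehnder capacity. By the theorem of Artstein-Avidan--Karasev--Ostrover, $c_{\rm EHZ}(K\times K^\circ)=4$ for every centrally symmetric convex body $K$. I must check that the paper's normalization of the dual $A^*$ matches the polar body $K^\circ$, that is, the unit ball of $h_A$. Granting this, $c_{\rm HZ}(\Psi(A\times A^*))=4>c_{\rm Gr}(\Psi(A\times A^*))$, which proves the claim.

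\emph{Expected main obstacle.} The delicate point is Step 3. I need the exact citation giving $c_{\rm HZ}=c_{\rm EHZ}=4$ for $K\times K^\circ$, stated with the same conventions as here: the normalization of $\omega_{\rm std}$, the identification $\Psi$, and duality with respect to the pairing $p\cdot q$. The factor in the choice $s=\sigma/2$ is tied to this constant $4$, so the two conventions must be matched consistently.
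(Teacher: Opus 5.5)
Your proposal is correct and follows the paper's argument. The paper also combines the lower bound $c_{\rm Gr}(\T^n\times A)\ge \tfrac{1}{2}\,\sys(\T^n\times\partial A)\,c_{\rm Gr}(A\times A^*)$ with $c_{\rm HZ}(A\times A^*)=4$ from Theorem 1.7 of \cite{AKO14}. The one difference is that the paper cites this lower bound as equation (3) of \cite{Ben24}, whereas your Steps 1--2 reprove it: you show that $\mathrm{int}(\tfrac{\sigma}{2}A^*)$ projects injectively under the covering $\R^n\to\T^n$ and then apply conformality, which gives a correct self-contained derivation of that inequality.
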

\begin{proof}[Proof of Proposition \ref{prop-counterexample}]
    By equation (3) in \cite{Ben24}, the Gromov width $c_{\rm Gr}$ satisfies
    \begin{equation}\label{eq-gromov-width-lower-bound}
        c_{\rm Gr}(\T^n\times A)\geq \frac{\sys(\T^n\times \partial A)\cdot c_{\rm Gr}(A\times A^*)}{2}.
    \end{equation} 
 By  Theorem 1.7 in \cite{AKO14}, the Hofer-Zehnder capacity satisfies $ c_{\rm HZ}(A\times A^* )=4.$ 
  Therefore, if $c_{\rm Gr}(\T^n\times A)< 2\sys(\T^n\times \partial A) ,$ then the inequality (\ref{eq-gromov-width-lower-bound}) above would imply 
  \[c_{\rm Gr}(A\times A^*)< 4=c_{\rm HZ}(A\times A^*).\]
    Consequently we have
  \[c_{\rm Gr}(\Psi(A \times A^*))=c_{\rm Gr}(A\times A^*) < c_{\rm HZ}(A\times A^*)= c_{\rm HZ}(\Psi(A \times A^*)),\]
  which completes the proof.
\end{proof}

\begin{remark} Via a similar argument in the proof of Theorem \ref{thm-normalized}, one could systematically estimate the Gromov width of $\T^2\times A$ for many centrally symmetric convex domain $A$. So far, we haven't found any example where $c_{\rm Gr}(\T^2\times A)< 2 \mathrm{sys}(\T^2\times \partial A)$ holds. \end{remark}

\appendix

\section{Estimation of systolic ratio} \label{sec-app} 

%First we show the relation between inclusion and minimal action for certain star-shaped domains.
 
\begin{prop}\label{prop-convex-hull}
  Given star-shaped  domains $U\subset V\subset \R^2$ where $V$ is convex, the minimal action   satisfies the following relation:
  \[\sys(\T^2\times \partial U)\leq \sys (\T^2\times \partial V).\]
\end{prop}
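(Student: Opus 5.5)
The plan is to compare both systoles with the support function of the fiber, evaluated on nonzero integer vectors. For a compact domain $W\subset\R^2$ containing the origin in its interior, set
\[
h_W(m):=\max_{p\in W}\langle p,m\rangle ,\qquad m\in\Z^2\backslash\{0\}.
\]
Note that $h_W(m)>0$. The target is the chain
\[
\sys(\T^2\times\partial U)\;\le\;\min_{m\in\Z^2\backslash\{0\}} h_U(m)\;\le\;\min_{m\in\Z^2\backslash\{0\}} h_V(m)\;\le\;\sys(\T^2\times\partial V).
\]
The middle inequality is immediate from $U\subset V$, since $h_U(m)\le h_V(m)$ for every $m$. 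Throughout, I would use the description of closed Reeb orbits from the beginning of Section \ref{sec-proofs}. A closed orbit lies in a single torus $\T^2\times\{p\}$, and its action is $\langle p, k\,c_p n(p)\rangle$ with $k\ge1$, by (\ref{eq-cond-action}) and (\ref{eq-action}). Here $c_pn(p)$ is the primitive integer rescaling of the outer unit normal $n(p)$.

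\textbf{Upper bound for $U$ (no convexity needed).} Fix any $m\in\Z^2\backslash\{0\}$; I would take $m$ primitive, which is harmless since $h_U(km)=k\,h_U(m)$, so the minimum is attained on primitive vectors. Choose a point $p\in U$ that maximizes the linear functional $\langle\cdot,m\rangle$ over the compact set $U$. This $p$ lies on $\partial U$, and since $\partial U$ is smooth, the outward unit normal there is $n(p)=m/|m|$. Indeed, at a boundary maximum of a linear function the normal is positively parallel to the gradient $m$. Hence $c_pn(p)=m$, and by (\ref{eq-action}) the torus $\T^2\times\{p\}$ carries a closed Reeb orbit of action $\langle p,m\rangle=h_U(m)$. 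Minimizing over $m$ gives the first inequality.

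\textbf{Lower bound for $V$ (convexity used).} Let $\gamma$ be any closed Reeb orbit on $\T^2\times\partial V$, lying in $\T^2\times\{p\}$ with $p\in\partial V$. Its action is $\langle p,m\rangle$, where $m=k\,c_pn(p)\in\Z^2\backslash\{0\}$ is a positive multiple of $n(p)$. Because $V$ is convex, the tangent line at $p$ is a supporting line. So $p$ maximizes $\langle\cdot,n(p)\rangle$ over $V$, and hence $\langle p,m\rangle=h_V(m)\ge\min_{m'}h_V(m')$. Taking the infimum over all closed orbits gives the last inequality.

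\textbf{Main obstacle: regularity.} The only delicate point is regularity, since $U$ or $V$ may have corners, as with the polygons used in the paper. For those, the systole is defined through the smoothings $U_\ep$, $V_\ep$ as in Example \ref{ex-perturb-cal}. I would handle this by choosing smoothings with $U_\ep\subset V_\ep$ and $V_\ep$ convex, for instance by rounding off corners inward, and by using that $h_{U_\ep}\to h_U$ and $h_{V_\ep}\to h_V$ uniformly on bounded sets of $m$. The only values of $m$ that matter are those with $h(m)$ below a fixed bound, and there are finitely many of them. Then I would apply the smooth argument and pass to the limit $\ep\to0$. I expect this limiting step to need the most care, but no new idea.
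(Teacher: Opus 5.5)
Your proof is correct. It rests on the same key fact as the paper's proof: for convex $V$, the action of every closed orbit on $\T^2\times\partial V$ equals the support function $h_V$ evaluated at an integer vector. The difference is in how the decomposition is organized.

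The paper passes through the convex hull $\hat U$ in two steps. It first claims $\mathrm{Spec}(\T^2\times\partial\hat U)\subset\mathrm{Spec}(\T^2\times\partial U)$. The justification describes $\partial\hat U$ as arcs of $\partial U$ joined by supporting segments tangent to $\partial U$ at their endpoints, so the action along such a segment equals the action at an endpoint lying on $\partial U$. Only then does it run the support-function comparison for $\hat U\subset V$.

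You apply the support-function idea directly to $U$. A maximizer of $\langle\cdot,m\rangle$ over $U$ is automatically a point of $\partial U$ with outer normal $m/|m|$. This gives an orbit on $\T^2\times\partial U$ of action $h_U(m)=h_{\hat U}(m)$, without having to analyze the structure or regularity of $\partial\hat U$.

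The paper's route gives a full spectral inclusion for the convex hull, which is more than the proposition needs. Your route is shorter. Taking $U=V$ in your two outer inequalities also yields the formula $\sys(\T^2\times\partial V)=\min_{m\in\Z^2\setminus\{0\}}h_V(m)$ for convex $V$. This is exactly the flat-Finsler formula $\min F(v)$ quoted from \cite{Ben24} in Proposition \ref{prop-first-ech}, since $h_A=F$ when $\T^2\times A=D_F^*\T^2$.

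Your treatment of corners is more careful than the paper's, which applies the proposition to polygons in the proof of Theorem \ref{thm-normalized} without comment. You do not actually need $U_\ep\subset V_\ep$. The bound $\limsup_{\ep\to0}\sys(\T^2\times\partial U_\ep)\le h_U(m_0)$, with $m_0$ a minimizer of $h_U$, holds for any Hausdorff-convergent smoothing. The lower bound only needs $V_\ep$ convex (property (iii) in Example \ref{ex-perturb-cal}) and containing a fixed disc about the origin, so that the minimizing $m$ stay bounded. Hence the inequality holds whichever admissible smoothing is used to define the systole.
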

\begin{proof}
    Consider the convex hull of $U$, namely the smallest convex domain that contains  $U$, denoted by $\hat{U}.$ First we prove that $\sys(\T^2\times \partial U)\leq \sys (\T^2\times \partial \hat{U})$.   For any  point $p\in \partial U$ but $p\notin \partial \hat{U}$,
   as shown in Figure \ref{fig-convex-hull}, by taking the convex hull of $U$, there exists $q_1,q_2\in \partial U$ such that

\noindent (i) $q_1,q_2\in \partial \hat{U}$ and $p$ lies strictly in the segment in $U$ connecting $p_1$ and $p_2;$

\noindent (ii) the line passing through $q_1,q_2$ is a supporting line for $\hat{U}$ such that  the line is tangent to $\partial U$ at $q_1$, $q_2.$ 
   
  By (\ref{eq-action}), the closed Reeb orbits on $(\T^2\times \partial U,\lambda_{\rm can}|_{\T^2\times \partial U})$ correspond  to points on $\partial U$ with (rescaled) outer normal vector $c_pn(p)\in \Z^2\backslash\{0\}$. By construction, the line segment $[q_1,q_2]\subset \partial \hat U$
replaces the concave arc of $\partial U$ between $q_1$ and $q_2$.
Consequently, the outward normal vectors of $\partial \hat U$ along the  
$[q_1,q_2]$ are constant and coincide with the normal direction of the
supporting line. In particular, any (rescaled) outer normal vector $c_pn(p)\in \mathbb{Z}^2$
arising from points $p\in \partial U$ lying strictly between $q_1$ and
$q_2$ does not occur as a normal vector of $\partial \hat U$. Thus, the
corresponding closed Reeb orbits on $\T^2\times \partial U$ do not persist on
$\T^2\times \partial \hat U$. As a result, the action spectra (defined in (\ref{eq-dfn-spec})) satisfy:
\[\mathrm{Spec}(\T^2\times \partial \hat{U})\subset \mathrm{Spec}(\T^2\times \partial U).\]
Therefore by definition of the minimal action we have
\begin{equation}\label{eq-sys-convex-hull}
    \sys(\T^2\times \partial U)\leq \sys(\T^2\times \partial \hat{U}).
\end{equation}

    \begin{figure}
        \centering
        \includegraphics[width=0.4\linewidth]{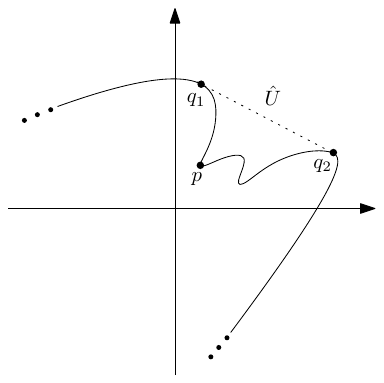}
        \caption{Domain $U$ and its convex hull $\hat{U}$ in local picture.}
        \label{fig-convex-hull}
    \end{figure}

  For any star-shaped  domains $U\subset V\subset \R^2$ where $V$ is convex, by definition $\hat{U}\subset V$. By (\ref{eq-sys-convex-hull}), it suffices to prove  that
  \[\sys (\T^2\times \partial\hat{U})\leq \sys (\T^2\times \partial V).  \]
  For any closed Reeb orbit of $(\T^2\times \partial V,\lambda_{\rm can}|_{\T^2\times \partial V})$ with action $T$, by (\ref{eq-Reeb}), it is contained in torus fiber $\T^2\times \{p\}$ for some $p\in \partial V.$
  Then by (\ref{eq-action}), the action of the orbit  is 
  \[T=c_p (p_1n_1(p)+p_2n_2(p))\]
  where $n(p)=(n_1(p),n_2(p))$ is the outer unit normal vector at $p=(p_1,p_2)\in \partial A$.  Denote   $c_pn(p)=(m_1,m_2)\in \Z^2$. The line $m_1x+m_2y-T=0$ is tangent to $\partial V$ at $p\in \partial V$ since $m_1p_1+m_2p_2-T=0$ and  the normal vector at $p$ is  perpendicular to the line.

  Define the support function of $U$ in the direction $(m_1,m_2)$ by 
  \[T': = \max_{(x,y)\in \hat{U}}(m_1x+m_2y).\]
  Since $\hat{U}$ is compact and convex, this maximum is attained at some point $p'=(p_1',p_2')\in \partial \hat{U} $ and the line $m_1x+m_2y-T'=0$ is a tangent line of $\hat{U}.$ Now, since $\hat{U}\subset V$ and $m_1x+m_2y\leq T$ on $V$, we have
  \[T'\coloneqq \max_{(x,y)\in \hat{U}}(m_1x+m_2y)\leq\max_{(x,y)\in V}(m_1x+m_2y)=T\]
   Moreover, the line $m_1x+m_2y-T'=0$ is   tangent to  $\hat{U} $ with $T'\leq T.$
  Then the  (rescaled) outer normal vector at $p'$ is still $(m_1,m_2)$ with $m_1p_1'+m_2p_2'=T'\leq T$. By (\ref{eq-action}), there exist a closed Reeb orbit in $\T^2\times \{p'\}$ with action $T'.$
  Thus $\sys(\T^2\times \partial V)\leq T'\leq T$. Since we can choose  any $T\in \mathrm{Spec}(\T^2\times \partial V)$, we conclude that $\mathrm{sys}(\T^2\times \partial \hat{U})\leq \sys(\T^2\times \partial V).$
\end{proof}
\begin{remark}
    Note that Proposition \ref{prop-convex-hull} does not always hold  for  star-shaped domains $U\subset V\subset \R^2$ if $V$ is  non-convex. For example, let $U$ be a polygon with vertices $(1,1),(1,-1),(-1,-1),(-1,1)$. For any $1>\ep>0$, let $V_\ep$ be  a non-convex polygon  with vertices:
    \[(1,1),(\ep,1),(\ep,1+\ep),(-\ep,1+\ep),(-\ep,1),(-1,1),(-1,-1),(1,-1)\]
    so that $U\subset V_\ep$. While $U$ is convex and it contains the polygon $B_1(1)$, by Proposition \ref{prop-convex-hull}  we have
    \[\sys(\T^2\times \partial U) \geq \sys(\T^2\times \partial B_1(1)) =1 \]
 where the last inequality comes from the  paragraph under (\ref{eq-sys-ineq}). However, by considering the outer unit normal vector $(1,0)$ at $\left(\ep,1+\frac{\ep}{2}\right)$ and the calculation in (\ref{eq-action}), there exists a closed Reeb orbit with action $\ep$ in $\T^2\times \{\left(\ep,1+\frac{\ep}{2}\right)\}.$
    Therefore   $\sys(\T^2\times \partial V_\ep)\leq \ep<1$. Moreover, $V_\ep$ can be chosen arbitrarily  close to $U$ in the $C^0$-sense when $\ep>0$ is sufficiently small.
\end{remark}
Based on the proposition above, we obtain the following corollary controlling the systolic ratio of domains in terms of the area ratio between the  convex hull of the domain and the domain itself.
\begin{cor}\label{cor-bound-sys}
Let $U\subset \R^2$ be a compact star-shaped domain and denote by $\hat{U}$ its convex hull.
Then
\[
\rhoT(\T^2\times U)
\le
\frac{\mathrm{area}(\hat{U})}{3 \mathrm{area}(U)}.
\]
\end{cor}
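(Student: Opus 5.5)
The plan is to compare $\T^2\times U$ with $\T^2\times\hat U$: the minimal action can only go up when passing to the convex hull, while the volume is controlled by areas. Recall that for any product domain the contact volume is $\mathrm{vol}(\T^2\times\partial U,\lambda_{\rm can}|_{\T^2\times\partial U})=2\,\mathrm{area}(U)$, as used in Example \ref{ex-perturb-cal} and in the proof of Theorem \ref{thm-class}. Hence
\[
\rhoT(\T^2\times U)=\frac{\sys(\T^2\times\partial U)^2}{2\,\mathrm{area}(U)}.
\]

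First, apply Proposition \ref{prop-convex-hull} with $V=\hat U$. This is legitimate: $\hat U$ is convex, contains $U$, and is star-shaped with respect to the origin, since it contains $U$ and is convex. The proposition gives $\sys(\T^2\times\partial U)\le\sys(\T^2\times\partial\hat U)$. Substituting into the formula above yields
\[
\rhoT(\T^2\times U)\le\frac{\sys(\T^2\times\partial\hat U)^2}{2\,\mathrm{area}(U)}
=\rhoT(\T^2\times\hat U)\cdot\frac{\mathrm{area}(\hat U)}{\mathrm{area}(U)}.
\]

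Second, bound $\rhoT(\T^2\times\hat U)\le\frac13$. Since $\hat U$ is a convex star-shaped domain, this is exactly the statement recalled in Remark \ref{rmk-1-3}(i), namely Theorem IV of \cite{ABT16} applied to the (possibly non-reversible) flat Finsler metric whose codisc bundle is $\T^2\times\hat U$. Combining the two steps gives $\rhoT(\T^2\times U)\le\mathrm{area}(\hat U)/(3\,\mathrm{area}(U))$.

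The main technical obstacle is regularity. The boundary $\partial\hat U$ typically contains straight segments joined to $\partial U$ only in a $C^1$ fashion, and it may have corners if $U$ itself does. Moreover, Proposition \ref{prop-convex-hull} and the ABT bound are formulated for smooth boundaries, with $\rhoT$ defined by the limiting procedure of Example \ref{ex-perturb-cal} in the non-smooth case. To handle this, I would approximate $\hat U$ by smooth strictly convex domains $\hat U_\ep\supset\hat U$ that converge to $\hat U$ in $C^0$. Proposition \ref{prop-convex-hull} applied to $U\subset\hat U_\ep$ gives $\sys(\T^2\times\partial U)\le\sys(\T^2\times\partial\hat U_\ep)\le\sqrt{\tfrac23\,\mathrm{area}(\hat U_\ep)}$. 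Letting $\ep\to0$ then gives the bound with $\mathrm{area}(\hat U)$, since the area is continuous under this convergence. If $U$ itself is non-smooth, one applies the same argument to smooth approximations $U_\ep$ and passes to the limit, consistently with the definition of $\rhoT$ for non-smooth fibers.
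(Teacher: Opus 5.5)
Your proof is correct and follows the paper's argument. You use Proposition \ref{prop-convex-hull} to get $\sys(\T^2\times\partial U)\le\sys(\T^2\times\partial\hat U)$, then the bound $\rhoT(\T^2\times\hat U)\le\frac13$ from Theorem IV of \cite{ABT16} together with $\mathrm{vol}=2\,\mathrm{area}$ gives the factor $\mathrm{area}(\hat U)/(3\,\mathrm{area}(U))$. Your approximation of $\hat U$ by smooth convex domains $\hat U_\ep\supset\hat U$ addresses a regularity point that the paper's proof does not treat.
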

\begin{proof}
    By definition of $\rhoT$ in (\ref{eq-codisc-sym-sys})  we have
    \begin{equation*}
        \begin{split}
            \rhoT(\T^2\times U)&=\frac{\sys(\T^2\times \partial U)^2}{\mathrm{vol}(\T^2\times \partial U, \lambda_{\rm std}|_{  \T^2\times \partial U})}\\&\leq \frac{\sys(\T^2\times \partial \hat{U})^2}{\mathrm{vol}(\T^2\times \partial U, \lambda_{\rm std}|_{  \T^2\times \partial U})}\\&=\frac{2 \mathrm{area}(\hat{U})}{ 2\mathrm{area}(U)}\cdot\frac{\sys(\T^2\times \partial \hat{U})^2}{\mathrm{vol}(\T^2\times \partial \hat{U}, \lambda_{\rm std}|_{  \T^2\times \partial U})}\leq \frac{\mathrm{area}(\hat{U})}{3 \mathrm{area}(U)}
        \end{split}
    \end{equation*} 
    where the first inequality is due to Proposition \ref{prop-convex-hull}; the last inequality is due to the upper bound of systolic ratio for codsic bundle of flat torus as illustrated in  Theorem IV of \cite{ABT16} (see also (i) in Remark \ref{rmk-1-3}).
\end{proof}

Moreover, for certain cases, taking convex hull does {\it not} increase the systolic ratio $\rhoT$, therefore, the systolic ratio upper bounded persist.

\begin{ex}\label{ex-convex-hull}
    For any star-shaped domain $A\subset \R^2$ such that the convex hull of $A$ is the polygon with vertices $(1,0),(0,1), (-1,0),(0,-1), $ we have
    \[\rhoT(\T^2\times A)\leq \frac{1}{4}.\]
 First, observe that $A$  contains the  points $(\pm 1,0),(0,\pm 1)$. In particular, 
 \[(1,0),(0,1)\in A\cap \R^2_{\geq 0}.\]
 Then restricting to the first quadrant, we parametrize the arc $\partial A\cap \R^2_{\geq 0}$ by polar angle $\theta\in [0,\frac{\pi}{2}]$, expressed as $\gamma(\theta)=(x(\theta), y(\theta))$  with $\gamma(0)=(1,0)$ and $\gamma(\frac{\pi}{2})=(0,1).$ Then consider the function $x(\theta)+y(\theta)$, by requirement of $A$, we have $x(\theta)+y(\theta)\leq 1$. By compactness there exists $\theta_1\in [0,\frac{\pi}{2}]$ such that
 \[a_1: = x(\theta_1)+y(\theta_1)=\min_{\theta\in [0,\frac{\pi}{2}]} \{x(\theta)+y(\theta)\}.\]
 Since $(1,0)\in \gamma $, we have $a_1\leq 1$ with equality holds if and only if $x(\theta)+y(\theta)\equiv 1$ for any $\theta\in [0,\frac{\pi}{2}]$. If $a_1=1$, then the line segment joining $(1,0),(0,1)$ is exactly the boundary of $\partial A$ in the first quadrant therefore tangent to $(x(\theta_1),y(\theta_1)).$ If $a_1<1$, since $\theta_1$  takes the minimum, the line $x+y=a_1$ is tangent to $\partial A$ at $(x(\theta_1),y(\theta_1)).$    Moreover, in either case,  $x(\theta)+y(\theta)\geq a_1$ for any $\theta\in [0,\frac{\pi}{2}]$, that is, the line segment joining $(a_1,0)$ and  $(0,a_1)$  is contained inside $A$.  
 Therefore the triangle $\Delta(a_1)$ with vertices $(0,a_1),(a_1,0),(0,0)$ is  contained in $A$. 

 Due to the tangency to  the line $x+y=a_1$, the (rescaled) outer normal vector at  $(x(\theta_1),y(\theta_1))\in \partial A$ is $(1,1)\in \Z^2.$
 By (\ref{eq-action}) there exists a closed Reeb orbit with action $a_1$ in $\T^2\times \{(x(\theta_1),y(\theta_1))\} $.
 Therefore $\sys(\T^2\times \partial A)\leq  a_1$.
 
 Similarly, we get other three triangles $\Delta(a_i)$ for other three quadrants with $i=2,3,4$. Without loss of generality assume  $a_1=\min_{1\leq i\leq 4} \{a_i\} $.  Then we have
 \[\rhoT(\T^2\times \partial A)= \frac{\sys(\T^2\times \partial A)^2}{\mathrm{vol}(\T^2\times \partial A, \lambda_{\rm std}|_{\partial \T^2\times \partial A})}\leq \frac{ a_1^2}{2\sum_{i=1}^4\mathrm{area}(\Delta(a_i))}\leq \frac{a_1^2}{4a_1^2}=\frac{1}{4}\]
 where the second inequality is due to the fact  that $A$ contains the triangles $\Delta_i$, $1\leq i\leq 4.$
\end{ex}

\section{Construction of embedding} \label{sec-app-B} 
In this section we construct the symplectic embedding $\sigma$ appearing in (\ref{eq-area-pres}). For simplicity, we assume $R=1$ and $\sigma\colon B^2(  1-\ep )\hookrightarrow SD(1)$ where $SD(1)$ is the slit disc defined in (\ref{eq-slit-disc}). For $\ep>0$ small enough we consider the following   two functions:
\begin{equation}\label{eq-alpha}
\alpha_\ep\colon \left(-\infty,1-\frac{\ep}{2} \right) \to [0,1],\quad \alpha_\ep(x)\coloneqq \left\{
\begin{aligned}
& 1 \quad \text{if }  x \in \left(0,1-\frac{\ep}{2} \right) \\
&  0  \quad  \text{if }  x \in \left(-\infty, -\frac{\ep^3}{2} \right)   \\  & \text{linear} \quad \text{if otherwise.}
\end{aligned}
\right.
\end{equation}
 and 
 \begin{equation}\label{eq-beta}
 \beta_\ep\colon \R\to  [-\ep^9,\ep^9],\quad  \beta_\ep(y)\coloneqq \left\{
\begin{aligned}
& -y \quad \text{if }  y \in \left(-\ep ^9,\ep ^9\right) \\
&  0  \quad  \text{if }  y \in \left(-\infty, -\frac{\ep^6}{2}  \right) \cup\left( \frac{\ep^6}{2}, +\infty\right) \\  & \text{linear} \quad \text{if otherwise.}
\end{aligned}
\right.
\end{equation}
\begin{figure}[h]
    \centering
    \includegraphics[width=0.95\linewidth]{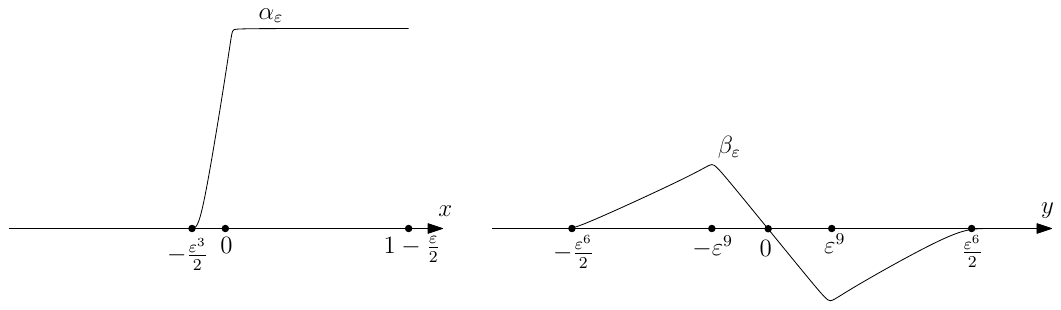}
    \caption{Function $\alpha_\ep$ and $\beta_\ep$}
    \label{fig:alphabeta}
\end{figure}
Then we  apply $o(\ep^9)$-small perturbation to $\alpha_\ep,\beta_\ep$ in neighborhoods of the corner points to smooth them,  still denoted them as $\alpha_\ep,\beta_\ep$, separately.  For an illustrative picture, see Figure \ref{fig:alphabeta}.

Next, we define the Hamiltonian function
\[ H\colon \left(-\infty,1-\frac{\ep}{2} \right)\times \R\to \R,  \quad H\coloneqq \alpha_\ep(x) \cdot \beta_\ep (y) .\]
Note that by  definition of $\alpha_\ep,\beta_\ep$, $H$ is in fact  supported in the strip $(-\ep^3,1-\frac{\ep}{2}]\times (-\ep^6,\ep^6).$
 On $(\R^2,\omega_{\rm std}=dx\wedge dy)$, the Hamiltonian equation of the Hamiltonian vector field $X_H$ is given by 
 \[\iota_{X_H}\omega_{\rm std}=-dH.\]
   In our case, 
   \begin{equation}\label{eq-Ham-vf}
       X_H(x,y)=\left(\frac{\partial H}{\partial y},-\frac{\partial H}{\partial x}\right)=\left(\frac{\partial \beta_\ep(y)}{\partial y}\alpha_\ep(x),-\frac{\partial \alpha_\ep (x)}{\partial x}\beta_\ep(y)\right).
   \end{equation} 
 We denote the Hamiltonian flow of $X_H$  as $\{\varphi^t\}_{t\in [0,1]}$.  
For any given point $(x,y)\in (-\ep^3,1-\frac{\ep}{2})\times (-\ep^6,\ep^6)$, we can discuss.

\medskip

\noindent Case I: If $y\in (-\ep^9,\ep^9)$, then there are two possibilities of $x:$

\noindent (i) if $x\in (0,1- \frac{\ep}{2})$, then $X_H(x,y)=(-1,0)$ so that there exists time $t\in (0,1)$ such that $\varphi^t(x,y)=(0,y)$. Note that for any $x\in (-\ep^3,\ep^3)$, $\alpha'_\ep(x)\leq \frac{2}{\ep^3}$. Therefore we estimate 
\[|X_H(x,y)|=\sqrt{\left(\frac{\partial \beta_\ep(y)}{\partial y}\alpha_\ep(x) \right)^2+\left(\frac{\partial \alpha_\ep (x)}{\partial x}\beta_\ep(y)\right)^2}\leq \sqrt{1\cdot\ep^6+\frac{4}{\ep^6}\cdot \ep^{12}}\leq 3\ep^3.\]
Then denote $z=(0,y)$, we discuss as follows. If $\varphi^{1-t}(z)$ is in  $(-\ep^3,\ep^3)\times (-\ep^6,\ep^6) $, then the estimation above implies
\begin{equation}\label{eq-estimation-norm}
     |\varphi^{1-t}(z)-z|=\left|\int_{0}^{1-t}\frac{d}{ds}\varphi^s(z)ds\right|\leq \int_{0}^{1-t}\left| X_H(\varphi^s(z))\right|ds\leq (1-t)\cdot 3\ep^3 \leq 3\ep^3.
\end{equation}
 If $\varphi^{1-t}(z)$  has $x$-coordinate $\geq \ep^3,$  by (\ref{eq-Ham-vf}), $x$-coordinate increases along the flow only if  $y\notin (-\ep^9,\ep^9)$, therefore $\varphi^{1-t}(z) \notin [\ep^3,1-\frac{\ep}{2})\times (-\ep^9,\ep^9)$. Then  we can
 estimate   
\begin{equation}\label{eq-Ham-vf-norm}
\begin{split}
      |X_H(x,y) |&=\sqrt{\left(\frac{\partial \beta_\ep(y)}{\partial y}\alpha_\ep(x_0) \right)^2+\left(\frac{\partial \alpha_\ep (x_0)}{\partial x}\beta_\ep(y)\right)^2}\\&\leq \sqrt{\frac{\ep^{18}}{(\frac{\ep^6}{2}-\ep^9)^2}\cdot1+ \frac{4}{\ep^6}\cdot\ep^{12}}\leq 3\ep^3.  
\end{split}
\end{equation}
In this case, similarly, (\ref{eq-estimation-norm}) holds for $\varphi^{1-t}(z)$.
As a flow, we have $\varphi^1(x,y)=\varphi^{1-t}\circ\varphi ^t(x,y) $. Therefore by triangle inequality:
\begin{equation}\label{eq-up-bound-sigma}
\begin{split}
     |\varphi^1(x,y)|^2&= |\varphi^{1-t}(z)|^2\leq(|\varphi^{1-t}(z)-z|+|z|)^2  \\& \leq (3\ep^3+|z|)^2< (3\ep^3+\ep^{9})^2<10\ep^6
\end{split}
\end{equation} 
for any $x\in (0,1- \frac{\ep}{2} )$ and $y\in (-\ep^9,\ep^9)$.

\medskip

\noindent (ii) If $x \in \left (-\ep^3,0 \right]$, then by the same argument in (i), we have $|\varphi^1(x,y)|<10\ep^6.$

\medskip 
 
\noindent Case II: If $y\in [\ep^9,\ep^6)$,  note that  for any $x\in (-\ep^3,1-\frac{\ep}{2}),$  $-\frac{\partial \alpha_\ep(x)}{\partial x}\beta_\ep(y)\geq 0$ therefore flowing by $\varphi^1$ does not decrease $y $ and $\varphi^1(x,y)$ admits $y$-coordinate in  $(\ep^9,\ep^6
)$.  Meanwhile, for any $y\in [\ep^9,\ep^6)$  by (\ref{eq-Ham-vf-norm})  we can estimate
\[|\varphi^1(x,y)-(x,y)| \leq  \int_{0}^1|X_H(\varphi^s(x,y))|^2ds \leq 9\ep^6.\] 
By triangle inequality and the inequality above we have
\begin{equation}\label{eq-up-bound-sigma-2}
    |\varphi^1(x,y)|^2\leq (|\varphi^1(x,y)-(x,y)|+|(x,y)|)^2\leq  (9\ep^6+|(x,y)|)^2\leq |(x,y)|^2+ 20\ep^6
\end{equation} 
where we use $|(x,y)|<1 $ and $81\ep^{12}\ll 2\ep^6$ in the last inequality.
 Similar  conclusion holds for $y\in (-\ep^6,-\ep^9].$ We can draw the following illustrative picture of the  Hamiltonian flow $\{\varphi^t\}_{t\in [0,1]}$:
 \begin{figure}[h]
     \centering
     \includegraphics[width=0.6\linewidth]{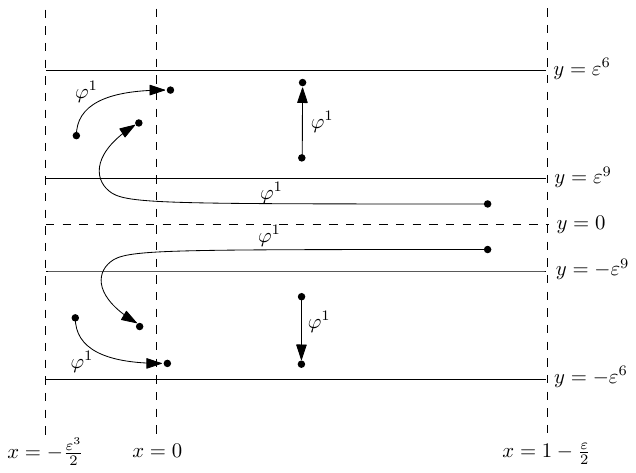}
     \caption{Flow by   $\varphi^1$   at each point.}
     \label{fig:Ham_flow}
 \end{figure}

Then we consider the restriction given  by $\varphi^1|_{B^2(1-\ep)}$.  It is  area-preserving since $\varphi^1$ is a Hamiltonian diffeomorphism. Moreover, for any point in the intersection
\[(x,y)\in S_\ep\coloneqq B^2(1-\ep)\cap  \left(\left(-\ep^3,1-\frac{\ep}{2}\right)\times (-\ep^6,\ep^6)\right),\]
we have $x\leq 1-\ep$. Therefore the above inequalities (\ref{eq-up-bound-sigma}) and  (\ref{eq-up-bound-sigma-2}) imply that for any $(x,y)\in S_\ep$, we have
\begin{equation}\label{eq-up-bound-final}
    |\varphi^1(x,y)|^2\leq |(x,y)|^2+20\ep^6=x^2+y^2+20\ep^6.
\end{equation} 
Since on $B^2(1-\ep)\backslash S_\ep$, $\varphi^1$ is identity we conclude that (\ref{eq-up-bound-final}) holds for every $(x,y)\in B^2(1-\ep).$
 
By  Case I  above, we have $\varphi^1|_{B^2(1-\ep)}(x,0) \subset B^2(4\ep^{3})$ for any $x\in (-\ep^3,1-\frac{\ep}{2}).$
Therefore, by considering the shifted map $\sigma(x,y) : = \varphi^1|_{B^2(1-\ep)}(x,y)-(4\ep^3,0)$,  we obtain an embedding
\[\sigma: B^2(1-\ep)\to SD(1).\]
By triangle inequality and (\ref{eq-up-bound-final}) we have
\[|\sigma (x,y)|^2=|\varphi^1|_{B^2(1-\ep)}(x,y)-(4\ep^3,0)|^2\leq |\varphi^1|_{B^2(1-\ep)}(x,y)|^2+8 \ep^3+16 \ep^6\leq x^2+y^2+9\ep^3.\]
Then by replacing $\ep$ with $\frac{\ep}{3}$ in the   above construction, we obtain  our desired $\sigma.$

\subsection*{Acknowledgements}
We thank useful communications with Alberto Abbondandolo, Simon Vialaret, and  Pedro Salom\~ao around Question \ref{que-sys-ratio} in this paper. The first author is partially supported by
National Key R\&D Program of China No.~2023YFA1010500, NSFC No.~12301081, 
NSFC No.~12361141812, and NSFC No.~12511540054.

\bibliographystyle{amsplain}   % or "abbrv", "unsrt", "alpha", etc.
\bibliography{references}   % your .bib file (without extension)

\end{document}